\newcommand{\drpullback}[1][dr]{\save*!/#1-4ex/#1:(-1,1)@^{|-}\restore}
\newtheorem{thm}{Theorem}
\newtheorem{prop}{Proposition}
\newtheorem{lemma}{Lemma}
\newtheorem{cor}{Corollary}
\providecommand{\qed}{\hspace*{\fill}\endproofbox}
\newenvironment{blanko}[1]{\subsection{#1}}{\penalty-700 \par\addvspace{\medskipamount}}
\newenvironment{thm*}{\subsection*{Theorem} \it}{\penalty-700 \par\addvspace{\medskipamount}}
\newenvironment{prop*}{\subsection*{Proposition} \it}{\penalty-700 \par\addvspace{\medskipamount}}
\newenvironment{cor*}{\subsection*{Corollary} \it}{\penalty-700 \par\addvspace{\medskipamount}}
\newcounter{newlistcounter}
\newenvironment{parenenumerate}{\begin{list}%
{{\upshape (\arabic{newlistcounter})}}%
{\usecounter{newlistcounter}%
\setlength{\itemsep}{0em}\setlength{\parsep}{0em}\setlength{\topsep}{0.5em}%
\setlength{\itemindent}{0em}}}{\end{list}}
\newenvironment{shortbulletlist}{\begin{list}%
{$\bullet$}%
{\usecounter{newlistcounter}%
\setlength{\itemsep}{0em}\setlength{\parsep}{0em}\setlength{\topsep}{0.5em}%
\setlength{\itemindent}{0em}}}{\end{list}}
\newcommand{\introskip}{\vspace*{0.8\bigskipamount}}
\newcommand{\tn}[1]{\textnormal{#1}}
\newcommand{\iso}{\cong}
\newcommand{\hmap}{\mathbf{h}}
\newcommand{\PsAlg}[1]{\tn{Ps-}{#1}\tn{-Alg}}
\newcommand{\Algl}[1]{{#1}\tn{-Alg}_{\tn{l}}}
\newcommand{\Alg}[1]{{#1}\tn{-Alg}}
\newcommand{\Algs}[1]{{#1}\tn{-Alg}_{\tn{s}}}
\newcommand{\PSh}[1]{\widehat{#1}}
\newcommand{\catequiv}{\simeq}
\newcommand{\yoneda}{\mathsf{y}}
\newcommand{\kat}[1]{\text{\textbf{\textsl{#1}}}}
\newcommand{\Set}{\kat{Set}}
\newcommand{\Cat}{\kat{Cat}}
\newcommand{\Grpd}{\kat{Grpd}}
\newcommand{\SymMonCat}{\kat{SMC}}
\newcommand{\sSymMonCat}{\kat{SMC}_{\tn{s}}}
\newcommand{\pSymMonCat}{\kat{pSMC}}
\newcommand{\spSymMonCat}{\kat{pSMC}_{\tn{s}}}
\newcommand{\Opd}{\kat{Opd}}
\newcommand{\pOpd}{\kat{pOpd}}
\newcommand{\spOpd}{\kat{pOpd}_{\tn{s}}}
\newcommand{\Subst}{\kat{Subst}}
\newcommand{\RPat}{\kat{RPat}}
\newcommand{\FeynCat}{\kat{FeynCat}}
\newcommand{\sRPat}{\kat{RPat}_{\tn{s}}}
\newcommand{\freesym}{\mathsf{S}}
\newcommand{\freemon}{\mathsf{M}}
\newcommand{\LHerm}{\mathsf{F}}
\newcommand{\pEnd}{\End_{\tn{p}}}
\newcommand{\pLHerm}{\LHerm_{\tn{p}}}
\newcommand{\iopEnd}{\End_{\tn{iop}}}
\newcommand{\iopLHerm}{\LHerm_{\tn{iop}}}
\newcommand{\ca}[1]{\mathcal{#1}}
\newcommand{\SMCMnd}{\freesym}
\newcommand{\id}{\operatorname{id}}
\newcommand{\obj}{\operatorname{obj}}
\newcommand{\End}{\operatorname{End}}
\newcommand{\lan}{\operatorname{lan}}
\newcommand{\isleftadjointto}{\dashv}
\newcommand{\op}{^{\text{{\rm{op}}}}}
\newcommand{\tensor}{\otimes}
\newcommand{\Tensor}{\textstyle{\bigotimes}}
\newcommand{\isopil}{\stackrel{\raisebox{0.1ex}[0ex][0ex]{\(\sim\)}}%
			{\raisebox{-0.15ex}[0.28ex]{\(\rightarrow\)}}}
\newcommand{\comma}{\raisebox{1pt}{$\downarrow$}}
\newcommand{\JJ}{\mathsf{J}}
\newcommand{\GPin}{\mathsf{G}}
\DeclareMathOperator*{\opAst}{\Asterisk}
\newcommand{\rhomark}{\sigma}
\newcommand{\oldlambda}{\gamma}
\newcommand{\sigmarhof}{\sigma_\alpha}
\newcommand{\gammatog}{g}   
\newcommand{\secondobjectinC}{x'}   
\newcommand{\arrowinC}{f}           
\newcommand{\mapofpresheaves}{u} 
\newcommand{\betacell}[1]{\theta^{#1}}
\newcommand{\oldw}{\tau_{\operatorname{comma}}}
\newcommand{\someoperation}{b}
\begin{document}

\setcounter{page}{148}

\title
[Regular patterns, substitudes, Feynman categories, operads]
{Regular patterns, substitudes, \\ Feynman categories and operads}

\copyrightyear{2018}

\author{Michael Batanin, Joachim Kock, and Mark Weber}

\address{Department of Mathematics, Macquarie University\\[5pt]
Departament de matem\`atiques, Universitat Aut\`onoma de Barcelona\\[5pt]
Faculty of Mathematics and Physics, Charles University, Prague\\[5pt]
}

\eaddress{michael.batanin@mq.edu.au
\CR
kock@mat.uab.cat
\CR
mark.weber.math@gmail.com}

\keywords{operads, symmetric monoidal categories}
\amsclass{18D10, 18D50}

\makeatletter

\def\maketitle{\par
 \begingroup
 \def\thefootnote{\fnsymbol{footnote}}
 \def\@makefnmark{\hbox
 to\z@{$^{\@thefnmark}$\hss}}
 \global\@topnum\z@ \@maketitle \thispagestyle{plain}
 \endgroup
 \setcounter{footnote}{0}
 \let\@maketitle\relax
{\def\thempfn{}
\footnotetext{Published in {\em Theory and Applications of 
Categories}, Vol.~33, No.~7, 2018, pp.~148--192.}
\footnotetext{\url{http://tac.mta.ca/tac/volumes/33/7/33-07abs.html}}
\ifthanks\footnotetext{\@thanks}\fi
\ifamsclass\footnotetext{\@amsclass}\fi
\ifkeywords\footnotetext{\@keywords}\fi
\def\relaxp[##1]{\relax}
\def\footnotemark{\@ifnextchar[{\relaxp}{\relax}}
\footnotetext{\copyright\ \@cauthor,\ \@copryear.
Permission to copy for private use granted.}
}}

\makeatother

\maketitle

\begin{abstract}
  We show that the regular patterns of Getzler (2009) form a
  2-category biequivalent to the 2-category of substitudes of Day and
  Street (2003), and that the Feynman categories of Kaufmann and Ward
  (2013) form a 2-category biequivalent to the 2-category of coloured
  operads (with invertible $2$-cells).
  These biequivalences induce equivalences between the
  corresponding categories of algebras.  There are three main
  ingredients in establishing these biequivalences.  The first is a
  strictification theorem (exploiting Power's General Coherence
  Result) which allows to reduce to the case where the structure maps
  are identity-on-objects functors and strict monoidal.  Second, we
  subsume the Getzler and Kaufmann--Ward hereditary axioms into the
  notion of Guitart exactness, a general condition ensuring
  compatibility between certain left Kan extensions and a given monad,
  in this case the free-symmetric-monoidal-category monad.  Finally we
  set up a biadjunction between substitudes and what we call pinned
  symmetric monoidal categories, from which the results follow as a
  consequence of the fact that the hereditary map is precisely the
  counit of this biadjunction.
\end{abstract}

\setcounter{tocdepth}{1}

\setcounter{section}{-1}

\section{Introduction and overview of results}

A proliferation of operad-related structures have seen the light in the past
decades, such as modular and cyclic operads, properads, and props.
Work of many people has sought to develop categorical formalisms covering all
these notions on a common footing, and in particular to describe adjunctions
induced by the passage from one type of structure to another as a
restriction/Kan extension pair \cite{Batanin:0207281, Batanin-Berger:1305.0086,
Batanin-Markl:1404.3886, Borisov-Manin:0609748, Costello:0402015, Day-Street:substitudes,
Fiore-Gambino-Hyland-Winskel:JLMS, Gambino-Joyal:1405.7270,
Getzler:0701767, Markl:0601129, Weber-Guitart}. For the line of development of the present work, the work of Costello~\cite{Costello:0402015} was especially
inspirational: in order to construct the modular envelope of a cyclic operad, he
presented these notions as symmetric monoidal functors out of certain symmetric
monoidal categories of trees and graphs, and arrived at the modular envelope as
a left Kan extension corresponding to the inclusion of one symmetric monoidal
category into the other.  Unfortunately it is not clear from this construction
that the resulting functor is even symmetric monoidal.  The problem was
addressed by Getzler~\cite{Getzler:0701767} by identifying a condition needed
for the construction to work: he introduced the notion of a `regular pattern'
(cf.~\ref{def:rpat-intro} below), which includes a condition formulated in terms
of Day convolution, and which guarantees that constructions like Costello's will work.
However, his condition is not always easy to verify in practice.  Meanwhile,
Markl~\cite{Markl:0601129}, and later Borisov and
Manin~\cite{Borisov-Manin:0609748}, studied general notions of graph categories,
designed with generalised notions of operad in mind, and isolated in particular
a certain hereditary condition, which has also been studied by Melli\`es and
Tabareau~\cite{MelliesTabareau-TAlgTheoriesKan} within a different formalism
(cf.~\ref{hered} below).  This condition found a comma-category formulation in
the recent work of Kaufmann and Ward~\cite{Kaufmann-Ward:1312.1269}, being the
essential axiom in their notion of `Feynman category', cf.~\ref{def:FC-intro}
below.

Kaufmann and Ward notice that the hereditary axiom is closely related to the
Day-convolution Kan extension property of Getzler, and 
provide an easy-to-check
condition under which the envelope construction (and other constructions given
by left Kan extensions) work.  Their work is the starting point for our
investigations.

Another common generalisation of operads and symmetric monoidal 
categories are the substitudes of Day and
Street~\cite{Day-Street:substitudes,Day-Street:lax-monoids} 
(in fact considered briefly already by 
Baez and Dolan~\cite{Baez-Dolan:9702} under the name $C$-operad).
Their interest came from the study of a
nonstandard convolution construction introduced by Bakalov, D'Andrea, and
Kac~\cite{Bakalov-DAndrea-Kac:0007121}. Substitudes can be also
understood as monads in the bicategory of generalised species,
introduced by Fiore, Gambino, Hyland and
Winskel~\cite{Fiore-Gambino-Hyland-Winskel:JLMS} in 2008.

In the present paper we prove that regular patterns are essentially the same
thing as substitudes, and that Feynman categories are essentially the same thing
as (coloured) operads.  More precisely, we establish biequivalences of
$2$-categories---this is the best sameness one can hope for, since the involved
structures are categorical and hence form $2$-categories.  For all four notions,
a key aspect is their algebras.  We show furthermore that under the
biequivalences established, the notions of algebras agree.  More precisely, if a
regular pattern and a substitude correspond to each other under the
biequivalence, then their categories of algebras are equivalent.
Similarly of course with Feynman categories and operads.

In a broader perspective, our results can be seen as part of a dictionary
between two approaches to operad-like structures and their algebras, namely the
symmetric-monoidal-category approach and the operadic/multicategorical approach.
This dictionary goes back to the origins of operad theory, cf.~Chapter 2 of
Boardman--Vogt~\cite{Boardman-Vogt:LNM347}.  In fact to establish the results we
exploit a third approach, namely that of $2$-monads, which goes back to Kelly's
paper on clubs \cite{Kelly:clubs}.  This more abstract approach allows us to
pinpoint some essential mechanisms in both approaches.  In particular we subsume
the Getzler and Kaufmann--Ward hereditary axioms into the $2$-categorical notion
of Guitart exactness, a general condition ensuring compatibility between certain
left Kan extensions and a given $2$-monad, in this case the
free-symmetric-monoidal-category monad. 
Since the notion of Guitart exactness has recently proved very
useful in operad theory and abstract homotopy theory~\cite{
Batanin-Berger:1305.0086,
Groth-Stovicek:1401.6451,
Kahn-Maltsiniotis, 
Maltsiniotis:carres,
Weber-Guitart},
this interpretation of the axioms of Getzler and
Kaufmann--Ward is of independent interest, and we elaborate on it in some 
detail.

The equivalence between regular patterns and substitudes does not seem
to have been foreseen by anybody.  The equivalence between Feynman
categories and operads may come as a surprise, as Kaufmann and Ward in
fact introduced Feynman categories with the intention of providing an
`improvement' over the theory of operads.  Part of the structure of
Feynman category is an explicit groupoid, which one might think of as
a `groupoid of colours', in contrast to the {\em set} of colours of an
operad.  Rather unexpectedly, this groupoid is now revealed to be
available already in the usual notion of operad, namely as the
groupoid of invertible unary operations.  Since the notion of operad
is undoubtedly fundamental, the equivalence we establish attests to
the importance also of the notion of Feynman category, now to be
regarded as a useful alternative viewpoint on operads.

In the present paper, for the sake of focusing on the principal
ideas, we work only over the category of sets.  For the enriched
setting, we refer to Caviglia~\cite{Caviglia:1510.01289}, who 
independently has established an enriched version of the equivalence 
between Feynman categories and operads.

\introskip

We proceed to state our main result, and sketch the ingredients that go into 
its proof.

\introskip

For $C$ a category, we denote by $\freesym C$ the free symmetric monoidal 
category on $C$.

\begin{blanko}{Definition of regular pattern.}\label{def:rpat-intro}
   (Getzler~\cite{Getzler:0701767}) A {\em regular pattern} is a symmetric strong
   monoidal functor $\tau: \freesym C \to M$ such that
  \begin{parenenumerate}
    \item $\tau$ is essentially surjective

    \item the induced functor of presheaves
    $\tau^* : \PSh M \to \PSh {\freesym C}$ is
    strong monoidal for the Day convolution tensor product.
  \end{parenenumerate}
\end{blanko}

\begin{blanko}{Definition of Feynman category.}\label{def:FC-intro}
   (Kaufmann--Ward~\cite{Kaufmann-Ward:1312.1269}) A {\em Feynman category} is a
   symmetric strong monoidal functor $\tau: \freesym C \to M$ such that
  \begin{parenenumerate}
    \item $C$ is a groupoid
    
    \item $\tau$ induces an equivalence of groupoids $\freesym C \isopil 
    M_{\operatorname{iso}}$

    \item $\tau$ induces an equivalence of groupoids
    $
    \freesym(M\comma C)_{\operatorname{iso}} \isopil (M\comma M)_{{\operatorname{iso}}}
    $.
  \end{parenenumerate}
\end{blanko}

\begin{blanko}{The hereditary condition.}
  Getzler's definition is staged in the enriched setting.  Kaufmann
  and Ward also give an enriched version called {\em weak Feynman
  category} (\cite{Kaufmann-Ward:1312.1269}, Definition~4.2 and
  Remark~4.3) which over $\Set$ reads as follows:

  $\tau:\freesym C \to M$ is an essentially surjective symmetric
  strong monoidal functor, and the following important {\em
  hereditary} condition holds (formulated in more detail in
  \ref{hered}): For any $x_1,\ldots,x_m, y_1,\ldots,y_n \in C$, the
  natural map given by tensoring
  $$
  \sum_{\alpha:\underline m \to \underline n} \prod_{j\in \underline n} 
  M( \bigotimes_{i \in \alpha^{-1}(j)} \tau x_i, \tau y_j)
  \longrightarrow
  M(\bigotimes_{i \in \underline m} \tau x_i, \bigotimes_{j\in \underline 
  n} \tau y_j)
  $$
  is a bijection.  (They recognise that this weak notion is `close' to
  Getzler's notion of regular pattern but do not prove that it is
  actually equivalent.  In fact this condition does not really play a
  role in the developments in \cite{Kaufmann-Ward:1312.1269}.)
  
  The hereditary condition is natural from a combinatorial viewpoint
  where it says that every morphism splits into a tensor product of
  `connected' morphisms.  We shall see (\ref{lem:hered=ff}) that in
  the essentially surjective case it is exactly the condition that the
  counit for the substitude Hermida adjunction is fully faithful.
\end{blanko}

\begin{blanko}{Operads and substitudes.}
  By {\em operad} we mean coloured symmetric operad in $\kat{Set}$.  We refer to
  the colours as objects.  The notion of substitude was introduced by Day and
  Street~\cite{Day-Street:substitudes}, as a general framework for substitution
  in the enriched setting.  Our substitudes are their symmetric substitudes,
  cf.~also \cite{Batanin-Berger-Markl:0906.4097}, whose appendix constitutes a
  concise reference for the basic theory of substitudes.  A quick definition is
  this (cf.~\cite[6.3]{Day-Street:lax-monoids}): a {\em substitude} is an operad
  equipped with an identity-on-objects operad morphism from a category (regarded as
  an operad with only unary operations).
\end{blanko}


We can now state the main theorem:

\begin{thm*}
  {\upshape (Cf.~Theorem~\ref{thm:Im(F)} and Theorem~\ref{thm:FC=opd}.)}
  There is a biequivalence between the $2$-category of substitudes and the
  $2$-category of regular patterns.  It restricts to a biequivalence between the
  $2$-category of operads (with invertible $2$-cells)
  and the $2$-category of Feynman categories (with invertible $2$-cells).
\end{thm*}


The biequivalence means that when going back and forth, not an
isomorphic object is obtained, but only an equivalent one.  This is a
question of strictification: one ingredient in the proof is to show
that every regular pattern is equivalent to a strict one, and a
variant of the main theorem can be stated as a $1$-equivalence between
these {\em strict} regular patterns and substitudes.  It should be
observed that equivalent regular patterns have equivalent algebras
(\ref{prop:algebras}).

\introskip

We briefly run through the main ingredients of the proof, and outline
the contents of the paper.

In Section~\ref{sec:strict}, we show that regular patterns and Feynman
categories can be strictified.  Both notions concern a symmetric
strong monoidal functor $\tau:\freesym C \to M$ where $\freesym C$ is
the free symmetric monoidal category on a category $C$, and in
particular is strict.  The main result is this:

\begin{prop*}
  {\upshape (Cf.~Proposition~\ref{prop:strictregpat}.)}
  Every essentially surjective symmetric strong monoidal functor $\freesym C \to M$, is
  equivalent to one $\freesym C \to M'$, for which $M'$ is a symmetric
  strict monoidal category, and $\freesym C \to M'$ is strict monoidal and
  identity-on-objects.
\end{prop*}

\noindent
This is a consequence of Power's coherence result
\cite{Power-GeneralCoherenceResult}, recalled in the appendix.
Since the notions of regular pattern and Feynman
category are invariant under monoidal equivalence, we may as well work
with the strict case, which will facilitate the arguments
greatly, and highlight the essential features of the notions, over the
subtleties of having coherence isomorphisms everywhere.

\introskip

The next step, which makes up Section~\ref{sec:exact},
is to put Getzler's condition (2) into the context of Guitart exactness.

\begin{blanko}{Guitart exactness.}
  Guitart~\cite{Guitart:1980} introduced the notion of exact square: they are those
  squares that pasted on top of a pointwise left Kan extension again gives a
  pointwise left Kan extension.  A morphism of $T$-algebras for a monad $T$ is
  exact when the algebra morphism coherence square is exact.  We shall need this notion
  only in the case where $T$ is the free symmetric monoidal category monad on
  $\Cat$: it thus concerns symmetric monoidal functors.
\end{blanko}

\begin{thm*}
  {\upshape (Cf.~Theorem~\ref{thm:exactness-convolution-characterisations}.)}
  The following are equivalent for a symmetric colax monoidal functor $\tau : S \to M$.

  \begin{parenenumerate}
    \item $\tau$ is Guitart exact
  
    \item left Kan extension of Yoneda along $\tau$ is strong monoidal
    
    \item left Kan extension of any strong monoidal functor along $\tau$ is again strong 
    monoidal
  
    \item $\tau^* : \PSh M \to \PSh S$ is strong monoidal
  
    \item a certain category of factorisations is connected (cf.~Lemmas
    \ref{lem:combinatorial-exactness-Cat-fact-version} and \ref{lem:Fact})
  \end{parenenumerate}
\end{thm*}

In the special case of interest to us we thus have

\begin{cor*}
  For a symmetric strong monoidal functor $\tau:\freesym C \to M$, axiom (2) of being
  a regular pattern is equivalent to being exact.
\end{cor*}

In Section~\ref{sec:Hereditary} we analyse the hereditary condition,
also shown to be equivalent to a special case of Guitart exactness:

\begin{prop*}
  {\upshape (Cf.~Proposition~\ref{prop:rPat=hered}.)}
  An essentially surjective symmetric strong monoidal functor $\tau : \freesym C 
  \to M$ is exact if and only if it satisfies the hereditary condition.  
\end{prop*}

\begin{cor*}
  A regular pattern is a symmetric strong monoidal functor 
  $\tau:\freesym C \to M$ which is essentially surjective and 
  satisfies the hereditary condition.
\end{cor*}

Axiom~(3) of the notion of Feynman category of Kaufmann and
Ward~\cite{Kaufmann-Ward:1312.1269}, the equivalence of comma categories 
$\freesym(M\comma C)_{\operatorname{iso}} \isopil (M\comma 
M)_{{\operatorname{iso}}}$, is of a slightly different flavour to the other
related conditions (and in particular, does not seem to carry over to the
enriched context).  While it is implicit in \cite{Kaufmann-Ward:1312.1269}
that this condition is essentially equivalent to the hereditary condition,
the relationship is actually involved enough to warrant a detailed proof,
which makes up our Section~\ref{sec:comma}.

The outcome is the following result, essentially proved by Kaufmann and 
Ward~\cite{Kaufmann-Ward:1312.1269}.

\begin{cor*}
  A Feynman category is a special case of a regular pattern,
  namely such that $C$ is a groupoid and 
  $\freesym C \to M_{\operatorname{iso}}$ is an equivalence.
\end{cor*}

With these two corollaries in place, we can finally establish the 
promised biequivalences in Section~\ref{sec:pins}.  We achieve this
by setting up {\em pinned} variations of the symmetric Hermida 
adjunction~\cite{Hermida-RepresentableMulticategories} between 
symmetric monoidal categories and operads:

\begin{blanko}{Pinned symmetric monoidal categories and pinned operads.}
  A {\em pinned symmetric monoidal category} is a symmetric monoidal
  category $M$ equipped with a symmetric strong monoidal functor
  $\freesym C \to M$ (where $\freesym C$ is the free symmetric
  monoidal category on some category $C$).  Hence regular patterns and
  Feynman categories are examples of pinned symmetric monoidal
  categories.  Similarly, a {\em pinned operad} is defined to be an
  operad equipped with a functor from a category, viewed as an operad
  with only unary operations.  Substitudes are thus pinned operads for
  which the structure map is identity-on-objects.  The latter
  condition exhibits substitudes as a coreflective subcategory of
  pinned operads.
\end{blanko}

\begin{blanko}{The substitude Hermida adjunction.}
  Our main result will follow readily from the following variation on the 
  Hermida adjunction---actually a biadjunction,
  which goes between pinned symmetric monoidal categories
  and substitudes via pinned operads:
  \begin{equation}\label{eq:intro-adj}
  \xymatrix{
    \pSymMonCat \ar@<-5pt>[r] \ar@{}[r]|-{\scriptstyle{\bot}}
    & \ar@<-5pt>[l] \pOpd \ar@<-5pt>[r] \ar@{}[r]|-{\scriptstyle{\bot}}
    & \ar@<-5pt>[l] \Subst   .
    }
  \end{equation}

  The right adjoint takes a pinned symmetric monoidal
  category $\freesym C \to M$ to the substitude
  \[
  C \to \End(M)|C ,
  \]
  the endomorphism operad on $M$, base-changed to $C$.  It is a important feature
  of substitudes (not enjoyed by operads) that they can be base-changed along
  functors.

  The left adjoint in \eqref{eq:intro-adj} takes a substitude $C \to P$ to the
  pinned symmetric monoidal category $\freesym C \to \LHerm P$, where $\LHerm P$ is the
  free symmetric monoidal category on $P$ as in the ordinary Hermida adjunction:
  the objects of $\LHerm P$ are finite sequences of objects in $P$, and its
  arrows from sequence $x_1,\ldots,x_m$ to sequence $y_1,\ldots,y_n$ are given
  by
  \[
  \LHerm P( \mathbf{x}, \mathbf{y}) :=
      \sum_{\alpha:\underline m \to \underline n} 
      \prod_{j\in \underline n}
      P( (x_i)_{i \in \alpha^{-1}(j)} , y_j) .
  \]
  The left adjoint is now shown to be fully faithful (\ref{prop:unit-iop-iso}).
  This important feature is not shared by the original 
  Hermida adjunction.
  Our key result characterises the image of the left adjoint
  by determining where the counit is invertible:
\end{blanko}

\begin{prop*}
  {\upshape (Cf.~Proposition~\ref{prop:substitudes-among-pinned-SMCs}.)}
  The counit $\varepsilon_\tau$ is an equivalence if and only if $\tau$ is
  essentially surjective and the hereditary condition holds.
\end{prop*}

\begin{cor*}
  The essential image of the left adjoint is the $2$-category of regular
  patterns.
\end{cor*}

In particular, this establishes the first part of the Main Theorem:

\begin{thm*}
  {\upshape (Cf.~Theorem~\ref{thm:Im(F)}.)}
  The left adjoint induces a biequivalence between
  the $2$-category of substitudes and the $2$-category of regular patterns.
\end{thm*}

To an operad $P$ one can assign a substitude by taking the canonical
groupoid pinning $P_1^{\operatorname{iso}} \to P$.  This is not
functorial in all $2$-cells, only in invertible ones; it is the object
part of a fully faithful $2$-functor $(\Opd)^{\operatorname{2-iso}}
\to \Subst$.  We characterise its regular patterns in the image of
this $2$-functor: they are precisely the Feynman categories.  This
establishes the second part of the main theorem:

\begin{thm*}
  {\upshape (Cf.~Theorem~\ref{thm:FC=opd}.)}
  The previous biequivalence induces a biequivalence between the 
  $2$-category of operads (with invertible $2$-cells) and the 
  $2$-category of Feynman categories (with invertible $2$-cells).
\end{thm*}

\noindent
{\sc Acknowledgments.} The authors thank Ezra Getzler, Ralph Kaufmann,
and Richard Garner for fruitful conversations, and thank the anonymous
referee for many pertinent remarks and suggestions that led to some
simplifications.  The bulk of this work was carried out while J.K.\ was
visiting Macquarie University in February--March 2015, sponsored by
Australian Research Council Discovery Grant DP130101969.  M.B.\
acknowledges the financial support of Scott Russell Johnson Memorial
Foundation, J.K.\ was supported by grant number MTM2013-42293-P of
Spain, and M.W.\ by grant number GA CR P201/12/G028 from the Czech
Science Foundation.  Both M.B.\ and M.W.\ acknowledge the support of the
Australian Research Council grant No.~DP130101172.

\section{Strictification of regular patterns}

\label{sec:strict}

\begin{blanko}{The free symmetric monoidal category.}
The free symmetric monoidal category $\freesym C$ on a category $C$ has the following explicit description. The objects of $\freesym C$ are the finite sequences of objects of $C$. A morphism is of the form
\[ (\rho,(f_i)_{i \in \underline n}) : (x_i)_{i \in \underline n} \longrightarrow (y_i)_{i \in \underline n} \]
where $\rho \in \Sigma_n$ is a permutation, and for $i \in \underline{n} =
\{1,...,n\}$, $f_i : x_i \to y_{\rho i}$.  Intuitively such a morphism is a
permutation labelled by arrows of $C$, as in
\[ \xygraph{!{0;(1.25,0):(0,1.25)::} {x_1}="t1" [r] {x_2}="t2" [r] {x_3}="t3" [r] {x_4}="t4"
"t1" [d] {y_1}="b1" [r] {y_2}="b2" [r] {y_3}="b3" [r] {y_4.}="b4"
"t1"-"b4"|(.43)@{>}^(.38){f_1} "t2"-@/^{.5pc}/"b1"|(.6)@{>}_(.6){f_2} "t3"-"b3"|(.57)@{>}^(.5){f_3} "t4"-@/_{.5pc}/"b2"|(.57)@{>}_(.54){f_4}} \]
For further details, see the Appendix, where it is explained and exploited that $\freesym$ underlies a $2$-monad on $\Cat$.  It will be important that $\freesym C$ is actually a symmetric {\em strict} monoidal category.
\end{blanko}

\begin{blanko}{Gabriel factorisation.}
  Given a functor $F:C \to D$, its factorisation into an identity-on-objects
  followed by a fully faithful functor is referred to as the {\em Gabriel
  factorisation} of $F$:
  $$\xymatrix{
      C\ar[rr]^F\ar[rd]_{\text{i.o.}} && D \\
     & D' \ar[ru]_{\text{f.f.}} & 
  }$$
\end{blanko}

\begin{prop}\label{prop:strict}
  Let $F:S \to M$ be a symmetric strong monoidal functor, and assume that
  $S$ is a symmetric {\em strict} monoidal category.  Then for the Gabriel factorisation
  $$\xymatrix{
      S\ar[rr]^F\ar[rd]_{G} && M \\
     & M' \ar[ru]_{H} & 
  }$$
  there is a canonical symmetric {\em strict} monoidal structure on $M'$
  for which $G$ is a symmetric strict monoidal functor, and $H$ is canonically symmetric 
  strong monoidal.
\end{prop}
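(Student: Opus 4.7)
The plan is to transport a symmetric strict monoidal structure onto $M'$ by exploiting the two defining properties of the Gabriel factorisation: $G$ is bijective on objects and $H$ is fully faithful. On objects, since $G$ is identity-on-objects, I simply set $x \otimes_{M'} y := x \otimes_S y$ and $I_{M'} := I_S$; this makes the putative associator and unitor of $M'$ identities (using that $S$ is strict), and for the symmetry of $M'$ we take the $G$-image of the symmetry of $S$.

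On morphisms, given $f : x \to x'$ and $g : y \to y'$ in $M'$, the full faithfulness of $H$ allows us to define $f \otimes_{M'} g$ as the unique arrow of $M'$ whose image under $H$ is the composite
$$\phi_{x',y'}^{-1} \circ (Hf \otimes_M Hg) \circ \phi_{x,y},$$
where $\phi_{x,y} : H(x \otimes_{M'} y) = F(x \otimes_S y) \xrightarrow{\sim} Fx \otimes_M Fy = Hx \otimes_M Hy$ is the coherence isomorphism from the strong monoidal structure of $F$. Bifunctoriality of $\otimes_{M'}$ then follows from bifunctoriality of $\otimes_M$, naturality of $\phi$, and faithfulness of $H$.

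Next I would verify the symmetric strict monoidal axioms in $M'$: identity associator and unitor, together with the transported symmetry, must satisfy pentagon, triangle, hexagon, and the relevant naturality and invertibility conditions. Since $H$ is faithful, each such equation between morphisms in $M'$ translates, via $H$ and the isomorphisms $\phi$, into the corresponding equation in $M$, which holds because $M$ is a symmetric monoidal category and $F$ is a symmetric strong monoidal functor (it is precisely the axioms of strong monoidality of $F$ that govern how the $\phi_{x,y}$ interact with the coherences of $M$). Strict monoidality of $G$ is then built in: its structure isomorphisms are identities by construction, and compatibility with tensor on morphisms, $G(f \otimes_S g) = Gf \otimes_{M'} Gg$, becomes by faithfulness of $H$ the equation $F(f \otimes_S g) = \phi^{-1} \circ (Ff \otimes Fg) \circ \phi$, which is exactly the naturality square of the coherence isomorphism of $F$. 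Finally, equipping $H$ with $\phi$ (and the unit coherence of $F$) as its symmetric strong monoidal structure, the axioms for $H$ reduce via $HG = F$ and the strictness of $G$ to those already satisfied by $F$; uniqueness of this structure on $H$ follows again from faithfulness.

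The main obstacle is bookkeeping rather than conceptual difficulty: one must carefully distinguish occurrences of $\otimes$ in $S$, $M'$ and $M$, and track how the identity coherences of $M'$ correspond under $H$ to the non-identity coherences of $M$ through the isomorphisms $\phi$. No new coherence argument is required beyond what is already encoded in $F$ being symmetric strong monoidal and $S$ being strict.
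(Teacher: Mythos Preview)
Your approach is correct but differs from the paper's. You carry out the direct construction: tensor on objects from $S$ (via $G$ being identity-on-objects), tensor on morphisms pulled back through $H$ using the coherence isomorphisms $\phi$ of $F$, and then verify all axioms by pushing them through the faithful $H$ into $M$. The paper explicitly acknowledges that this direct route works but calls it ``somewhat subtle'' and ``rather cumbersome'', singling out the naturality of the identity associator in $M'$ (i.e.\ that $(f \otimes_{M'} g)\otimes_{M'} h = f\otimes_{M'}(g\otimes_{M'} h)$ for arbitrary morphisms of $M'$, not just those in the image of $G$) as the delicate point; you do list naturality among the things to check, though you might flag more explicitly that this is where the associativity coherence axiom for $F$ is genuinely consumed.

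The paper instead invokes Power's general coherence result: working with the $2$-monad $\freesym$, one observes that $\freesym$ preserves bijective-on-objects functors and that the Gabriel factorisation is an \emph{enhanced} factorisation system. The strong-monoidal coherence square $\overline{F}:\bigotimes_M\circ\freesym F \cong F\circ\bigotimes_S$ then factors uniquely through the diagonal fill $\bigotimes:\freesym M'\to M'$, yielding the strict $\freesym$-algebra structure on $M'$, strictness of $G$, and the coherence $\overline{H}$ for $H$ all at once. This buys a uniform argument that applies to any $2$-monad on $\Cat$ preserving bijective-on-objects functors, and hides the componentwise bookkeeping inside the orthogonality/enhancedness property. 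Your argument, by contrast, is elementary and self-contained, at the cost of the coherence verifications you describe.
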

The proof, relegated to the Appendix, exploits the general coherence result
of Power~\cite{Power-GeneralCoherenceResult}. A direct proof is somewhat subtle
because the monoidal structure on $M'$ is constructed as a mix of the monoidal 
structures on $S$ and on $M$, and it is rather cumbersome to check that the
trivial associator defined on $M'$ is actually natural. Instead following Power's 
approach gives an elegant abstract proof, which exploits the following easily
checked facts: (1) the Gabriel factorisation has a $2$-dimensional aspect where
isomorphisms can always be shifted right in the factorisation; and (2) $\freesym$
preserves this factorisation.


\begin{blanko}{Pinned symmetric monoidal categories.}
\label{bl:pSMC}
  The following terminology will be justified in Section~\ref{sec:pins}, as
  part of further pinned notions.  A {\em pinned symmetric monoidal
  category} is a symmetric monoidal category $M$ equipped with a symmetric
  strong monoidal functor $\tau : \freesym C \to M$ (where $C$ is some
  category). Pinned symmetric monoidal categories are the objects of a
  2-category $\pSymMonCat$.  A morphism $(C_1,\tau_1,M_1) \to 
  (C_2,\tau_2,M_2)$ is a triple
  $(F,G,\omega)$ consisting of a functor $F : C_1 \to C_2$, a symmetric strong
  monoidal functor $G : M_1 \to M_2$, and an invertible monoidal natural
  transformation $\omega :\tau_2 \circ \freesym F \simeq G \circ \tau_1$.  A
  $2$-cell $(F,G,\omega) \to (F',G',\omega')$ is a pair
  $(\alpha,\beta)$, where $\alpha :F \to F'$ is a natural transformation
  and $\beta : G \to G'$ is a monoidal natural transformation, such that
  $\omega$ pasted with $\beta$ equals $\freesym \alpha$ pasted with
  $\omega'$.
  
  A pinned symmetric monoidal category $\tau: \freesym C \to M$ is called
  {\em strict} when $M$ is a symmetric strict monoidal category and $\tau$
  is a symmetric strict monoidal functor.  A morphism $(F,G,\omega) :
  (C_1,\tau_1,M_1) \to (C_2,\tau_2,M_2)$ is {\em strict} when $G$ is a symmetric
  strict monoidal functor and $\omega$ is the identity.  The locally full
  sub-$2$-category spanned by the strict objects and strict morphisms is
  denoted $\spSymMonCat$.
\end{blanko}

\begin{blanko}{Regular patterns.}\label{RPat}
  (Getzler~\cite{Getzler:0701767}) A {\em regular pattern} is a symmetric
  strong monoidal functor $\tau: \freesym C \to M$ such that
  \begin{parenenumerate}
  \item $\tau$ is essentially surjective
  \item the induced functor of presheaves
  $\tau^* : \PSh M \to \PSh {\freesym C}$ is
  strong monoidal for the Day convolution tensor product.
  \end{parenenumerate}
  Regular patterns form a $2$-category $\RPat$,
  namely the full sub-$2$-category of $\pSymMonCat$ spanned by the regular patterns.

  A regular pattern (resp.~a morphism of regular patterns) is called {\em strict} when
  it is strict as a pinned symmetric monoidal category (resp.~a morphism 
  of pinned symmetric monoidal categories).  These form thus a full sub-$2$-category
  $\sRPat \subset \spSymMonCat$ and a locally full sub-$2$-category
  $\sRPat \subset \RPat$. 
\end{blanko}

\begin{prop}\label{prop:strictregpat}
  The inclusion $2$-functor $\sRPat \subset \RPat$ is a biequivalence.
\end{prop}

\begin{proof}
  If $\tau: \freesym C \to M$ is a regular pattern, in its Gabriel factorisation
  (as in Proposition~\ref{prop:strict})
  $$\xymatrix{
      \freesym C\ar[rr]^\tau\ar[rd]_{\sigma} && M \\
     & M' \ar[ru]_{\phi} & 
  }$$
  $\sigma$ is identity-on-objects, and $\phi$ is a strong monoidal equivalence
  (say with pseudo-inverse $\psi$ and $2$-cell $\omega:\psi\phi\simeq
  \id_{M'}$).  It follows that $\sigma^*: \widehat M \to \widehat{\freesym C}$
  is strong monoidal: in any case it is lax monoidal, and since both $\tau^*$
  and $\phi^*$ are strong monoidal, also $\sigma^*$ is strong monoidal.  In
  other words, $\sigma$ is a strict regular pattern.  The triple $(\id_C, \phi,
  \id) : \sigma \to \tau$ is an equivalence in
  $\RPat$ with pseudo-inverse $(\id_C, \psi,
  \omega\sigma)$.  This shows that $\tau$ is equivalent to a strict regular
  pattern, so the inclusion $2$-functor is essentially surjective on objects.
  
  The inclusion $2$-functor is locally fully faithful by construction, so
  it remains to see it is locally essentially surjective, i.e.~essentially 
  surjective on morphisms. We need to show, given strict regular patterns
  $\sigma$ and $\sigma'$,
  that any 
  morphism $(F,G,\omega):\sigma \to \sigma'$ is equivalent to a strict one 
  $(F,G_{\text{strict}}, \omega_{\text{strict}}{=}\id)$.
  Since $\sigma$ is bijective on objects, there is a unique way to define the
  strict $G_{\text{strict}}$ on objects so that $\omega_{\text{strict}}$
  becomes the identity $2$-cell.
  It will be equivalent to $G$ by means of the old $\omega$, which also ensures
  the functoriality of $G_{\text{strict}}$.  It is symmetric strict monoidal
  since $\freesym F$ and $\sigma'$ are.  So the inclusion $2$-functor is 
  locally essentially surjective, and hence altogether a biequivalence.
\end{proof}


\begin{blanko}{Algebras.}
  Let $W$ be a symmetric monoidal category.  An {\em algebra} for a
  regular pattern $\tau: \freesym C \to M$ in $W$, is a symmetric
  strong monoidal functor $M \to W$.  With morphisms of algebras given
  by monoidal natural transformations, there is a category
  $\tn{Alg}_{\tau}(W)$ of algebras of $(C,\tau,M)$ in $W$.  A morphism
  of regular patterns $(C,\tau,M) \to (C',\tau',M')$ induces a functor
  $\tn{Alg}_{\tau'}(W) \to \tn{Alg}_{\tau}(W)$ by precomposition.  The
  following proposition is now clear.
\end{blanko}
\begin{prop}
Equivalent regular patterns have equivalent categories of algebras. \qed
\end{prop}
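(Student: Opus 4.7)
The plan is to reduce the statement to the observation that precomposition with a symmetric monoidal equivalence induces an equivalence on categories of symmetric strong monoidal functors into any target.

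First, I would unpack what an equivalence in the 2-category of pinned symmetric monoidal categories means. Such an equivalence between $(C,\tau,M)$ and $(C',\tau',M')$ consists of morphisms $(F,G)$ and $(F',G')$ together with invertible 2-cells $(\alpha,\beta)$ and $(\alpha',\beta')$ witnessing that the composites are isomorphic to the respective identities. The key observation is that the second components $\beta : GG' \Rightarrow \id_{M'}$ and $\beta' : G'G \Rightarrow \id_M$ are monoidal natural isomorphisms, which exhibits $G : M \to M'$ as a symmetric strong monoidal equivalence, with $G'$ as a pseudo-inverse.

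Second, I would invoke the general fact that precomposition with a symmetric strong monoidal functor $G : M \to M'$ sends symmetric strong monoidal functors $M' \to W$ to symmetric strong monoidal functors $M \to W$, and monoidal natural transformations to monoidal natural transformations. This gives a functor $G^* : \tn{Alg}_{\tau'}(W) \to \tn{Alg}_{\tau}(W)$, and likewise $G'^* : \tn{Alg}_{\tau}(W) \to \tn{Alg}_{\tau'}(W)$. The composites $G'^* \circ G^* = (GG')^*$ and $G^* \circ G'^* = (G'G)^*$ are then naturally isomorphic to the respective identity functors, by whiskering with the monoidal natural isomorphisms $\beta$ and $\beta'$; one checks this whiskering yields monoidal natural isomorphisms between algebras, because the components of $\beta$ and $\beta'$ are monoidal.

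The only mildly delicate point, and the nearest thing to an obstacle, is verifying that whiskering a monoidal natural isomorphism by a symmetric strong monoidal functor on either side again yields a monoidal natural transformation, and that the resulting isomorphism $A \Rightarrow A \circ G \circ G'$ for an algebra $A : M' \to W$ is monoidal in the sense required for $\tn{Alg}_{\tau'}(W)$. This is a standard 2-categorical check in the 2-category $\SymMonCat$ of symmetric monoidal categories, symmetric strong monoidal functors, and monoidal natural transformations: the operation $\tn{Alg}_{(-)}(W)$ is (the restriction of) the representable 2-functor $\SymMonCat(-,W)$, which is automatically a 2-functor and therefore preserves equivalences. Since the forgetful 2-functor from regular patterns to $\SymMonCat$ sends an equivalence of regular patterns to an equivalence of symmetric monoidal categories, the result follows.
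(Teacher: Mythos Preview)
Your proposal is correct and takes essentially the same approach the paper has in mind: the paper marks this proposition with an immediate \qed after noting it is ``clear'', since $\tn{Alg}_\tau(W)$ is just the hom-category $\SymMonCat(M,W)$ and precomposition with the symmetric monoidal equivalence underlying an equivalence of regular patterns is an equivalence of such hom-categories. Your write-up simply unpacks this 2-functoriality argument in detail.
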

Together with Proposition~\ref{prop:strictregpat}, 
this justifies 
emphasising strict regular patterns, as we shall often
do. This facilitates extracting equivalent characterisations of 
condition (2)---Guitart exactness and the hereditary 
condition, in turn exploited in the final comparison with substitudes.

\section{Guitart exactness}
\label{sec:exact}

In this section and the next we show how the main axioms in the definitions
of regular pattern and Feynman category can be subsumed in the theory of
Guitart exactness.  An important aspect of Guitart exactness is to serve as
a criterion for pointwise left Kan extensions to be compatible with algebraic 
structures.  This direction of the theory is developed rather systematically in
\cite{Weber-Guitart} in the abstract setting of a $2$-monad
on a $2$-category with comma objects.  The interesting case for
the present purposes is the case of the free-symmetric-monoidal-category
monad on $\Cat$, and the issue is then under what circumstances
left Kan extensions are symmetric monoidal functors.

We write $\PSh A := [A\op,\Set]$ for the category of presheaves, and $\yoneda_A:
A \to \PSh A$ for the Yoneda embedding.

\begin{blanko}{Exact squares.}
  A $2$-cell in $\Cat$ of the form
\begin{equation}\label{eq:lax-square-in-Cat}
  \begin{gathered}
\xygraph{!{0;(1.5,0):(0,.6667)::} {P}="p0" [r] {B}="p1" [d] {C}="p2" [l] {A}="p3" "p0":"p1"^-{q}:"p2"^-{g}:@{<-}"p3"^-{f}:@{<-}"p0"^-{p} "p0" [d(.55)r(.4)] :@{=>}[r(.2)]^-{\phi}}
  \end{gathered}
\end{equation}
(called a \emph{lax square}) is {\em exact in the sense of Guitart} \cite{Guitart:1980} when for any natural transformation $\psi$ which exhibits $l$ as a pointwise left Kan extension of $h$ along $f$, the composite
  \[ 
  \xygraph{{P}="tl" [r(2)] {B}="tr" [l(2)d] {A}="l" [r(2)] {C}="r" [dl] 
  {V}="b" "l":"r"^-{f}:"b"^-{l}:@{<-}"l"^-{h} [d(.5)r(.85)] :@{=>}[r(.3)]^-{\psi}
  "tl"(:"tr"^-{q}:"r"^-{g},:"l"_-{p}) "tl" [d(.5)r(.85)] :@{=>}[r(.3)]^-{\phi}} 
  \]
  exhibits $lg$ as a pointwise left Kan extension of $hp$ along $q$.

Suppose that in this situation $A$ is locally small and $f$ is admissible in the
sense \cite{StreetWalters-YonedaStructures, Weber-2Toposes} that $C(fa,c)$ is small
for all $a \in A$ and $c \in C$. One thus has the functor $C(f,1) : C \to
\PSh A$ given on objects by $c \mapsto C(f(-),c)$, and the effect on arrows of
the functor $f$ can be organised into a natural transformation
\[
\xygraph{{A}="p0" [r(2)] {C}="p1" [dl] {\PSh A}="p2"
"p0":"p1"^-{f}:"p2"^-{C(f,1)}:@{<-}"p0"^-{\yoneda_A} "p0" [d(.5)r(.85)] 
:@{=>}[r(.3)]^-{\chi^f}} 
\]
which exhibits $C(f,1)$ as a pointwise left
Kan extension of $\yoneda_A$ along $f$ (see e.g.~\cite{Weber-2Toposes}
Example~3.3).

\end{blanko}

\begin{lemma}\label{lem:exact-yoneda-formulation}
  {\upshape (Cf.~Guitart~\cite{Guitart:1980}.)}
  A lax square \eqref{eq:lax-square-in-Cat} in which $A$ and $P$ are small and
  $f$ is admissible is exact if and only if the composite
  \begin{equation}\label{eq:ple-for-exactness}
    \begin{gathered}
  \xygraph{
  {P}="tl" [r(2)] {B}="tr" [l(2)d] {A}="l" [r(2)] {C}="r" [dl] {\PSh A}="b" "l":"r"^-{f}:"b"^-{C(f,1)}:@{<-}"l"^-{\yoneda_A} [d(.5)r(.85)] :@{=>}[r(.3)]^-{\chi^f}
  "tl"(:"tr"^-{q}:"r"^-{g},:"l"_-{p}) "tl" [d(.5)r(.85)] :@{=>}[r(.3)]^-{\phi}
  }\end{gathered}
  \end{equation}
  exhibits $C(f,1)\circ g$ as a pointwise left Kan extension of $\yoneda_A \circ p$
  along $q$.
  \qed
\end{lemma}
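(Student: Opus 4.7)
My plan is to use the representability characterization of pointwise left Kan extensions, together with the fact that $\chi^f$ itself is the ``universal'' pointwise left Kan extension along $f$ (as recalled immediately above the lemma statement), in order to reduce the general exactness condition to its Yoneda instance.

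The forward direction should be immediate. Since $f$ is admissible and $A$ is small, $\chi^f$ exhibits $C(f,1)$ as a pointwise left Kan extension of $\yoneda_A$ along $f$. Specializing the definition of Guitart exactness of the square (\ref{eq:lax-square-in-Cat}) to the instance $(h,l,\psi)=(\yoneda_A, C(f,1), \chi^f)$ then yields precisely that (\ref{eq:ple-for-exactness}) is a pointwise left Kan extension.

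For the converse, I would start with any $\psi:h\Rightarrow lf$ exhibiting $l:C\to V$ as a pointwise left Kan extension of $h:A\to V$ along $f$, and show that the paste of $\phi$ with $\psi$ exhibits $lg$ as a pointwise left Kan extension of $hp$ along $q$. By the representability criterion, it suffices to show that for each $b\in B$ and each $v\in V$ the induced comparison
\[
V\bigl(l(gb),v\bigr)\longrightarrow \lim_{(e,\beta)\in q\comma b} V\bigl(hp(e),v\bigr)
\]
is a bijection. Writing $\bar v := V(h(-),v)\in \PSh A$, I would first use the pointwise property of $\psi$ at $gb$ together with the standard presentation $C(f-,gb)\cong \operatorname{colim}_{(a,\alpha)\in f\comma gb} A(-,a)$ in $\PSh A$ (which is the pointwise content of $\chi^f$ at $gb$) to identify the left side with $\PSh A\bigl(C(f-,gb),\bar v\bigr)$. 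The hypothesis (\ref{eq:ple-for-exactness}) at $b$ then presents $C(f-,gb)$ as $\operatorname{colim}_{(e,\beta)\in q\comma b} A(-,pe)$ in $\PSh A$, so hom into $\bar v$ turns this into $\lim_{q\comma b} \bar v(pe) = \lim_{q\comma b} V(hp(e),v)$. Composing the two isomorphisms produces the required bijection.

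The step I expect to be the main obstacle is not any of the individual isomorphisms above but the bookkeeping that identifies the composite isomorphism with the \emph{canonical} comparison map induced by the pasted 2-cell. This reduces to checking that the two universal cocones (the one on $\yoneda_A\circ p$ indexed by $q\comma b$ provided by the hypothesis, and the one on $h$ indexed by $f\comma gb$ provided by $\psi$) are compatible with the canonical comparison functor $q\comma b \to f\comma gb$, $(e,\beta)\mapsto (pe,\, g\beta\circ \phi_e)$, determined by $p$, $\phi$ and $g$. This naturality check is routine but requires care.
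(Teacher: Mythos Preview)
Your argument is correct. The forward direction is indeed immediate from the definition, and your converse argument---reducing to the representable case by rewriting both sides as $\PSh A\bigl(C(f-,gb),\,V(h-,v)\bigr)$ via the pointwise properties of $\psi$ and of $\chi^f$, then invoking the hypothesis at $b$---is the standard way to prove this. The naturality check you flag as the main obstacle does go through: unwinding the composite isomorphism, one finds that $t\in V(l(gb),v)$ is sent to the family $\bigl(t\circ l(g\beta)\circ l(\phi_e)\circ \psi_{pe}\bigr)_{(e,\beta)\in q\comma b}$, which is exactly the canonical comparison induced by the pasted $2$-cell.

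As for comparison with the paper: the paper does not supply a proof of this lemma at all. It is stated with a citation to Guitart and a \qed, treating it as a known fact. So there is no ``paper's own proof'' to compare against; your argument simply fills in what the authors omit.
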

When $f = \yoneda_A$, the $2$-cell $\chi^f$ is the identity, and we get the following.
\begin{cor}\label{cor:exact-ple-squares}
  If $P$ and $A$ are small and
  \[
  \xygraph{!{0;(1.5,0):(0,.6667)::}
  {P}="p0" [r] {B}="p1" [d] {\PSh A}="p2" [l] {A}="p3" 
  "p0":"p1"^-{q}:"p2"^-{g}:@{<-}"p3"^-{\yoneda_A}:@{<-}"p0"^-{p} 
  "p0" [d(.55)r(.4)] :@{=>}[r(.2)]^-{\phi}
  }
  \]
  exhibits $g$ as a pointwise left Kan extension of $\yoneda_A\circ p$ along $q$, then
  $\phi$ is exact.
  \qed
\end{cor}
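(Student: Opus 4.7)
The plan is to deduce this directly from Lemma~\ref{lem:exact-yoneda-formulation} by specialising $f$ to $\yoneda_A$ and $C$ to $\PSh A$. First I would verify that the hypotheses of the lemma apply in this setting: $P$ and $A$ are small by assumption, and $\yoneda_A$ is admissible because $\PSh A(\yoneda_A a, X) \cong X(a)$ is a set for every $a \in A$ and every presheaf $X$, by the Yoneda lemma.

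Next I would identify the data $C(f,1)$ and $\chi^f$ in this special case. Taking $f = \yoneda_A$, the functor $\PSh A(\yoneda_A(-), X) \colon A\op \to \Set$ is naturally isomorphic to $X$ itself, again by the Yoneda lemma, so $C(f,1)$ is naturally isomorphic to the identity functor on $\PSh A$. Under this identification the 2-cell $\chi^{\yoneda_A}$ becomes the identity, so the composite appearing in~(\ref{eq:ple-for-exactness}) reduces to $\phi$ itself, with top-right corner $g$ (up to this canonical isomorphism) rather than $C(f,1)\circ g$.

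With these identifications, the criterion supplied by Lemma~\ref{lem:exact-yoneda-formulation} reads: the square $\phi$ is exact if and only if $\phi$ exhibits $g$ as a pointwise left Kan extension of $\yoneda_A\circ p$ along $q$. But this is precisely the standing hypothesis, so exactness follows. The only step requiring any care is the identification of $C(\yoneda_A,1)$ with the identity and the corresponding triviality of $\chi^{\yoneda_A}$; once this routine Yoneda bookkeeping is in place, the corollary is immediate from the lemma.
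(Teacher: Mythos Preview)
Your proposal is correct and follows the same route as the paper: specialise Lemma~\ref{lem:exact-yoneda-formulation} to $f=\yoneda_A$, use Yoneda to identify $C(\yoneda_A,1)$ with the identity and $\chi^{\yoneda_A}$ with the identity $2$-cell, and read off the conclusion. The paper records only the one-line observation that $\chi^{\yoneda_A}$ is the identity; your extra checks (admissibility, the Yoneda bookkeeping) are the routine details behind that observation.
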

Exact squares can be recognised in elementary terms in the following way.
First given $a \in A$, $b \in B$ and $\gamma : fa \to gb$ we denote by
$\tn{Fact}_{\phi}(a,\gamma,b)$ the following category.  Its objects are
triples $(\alpha,x,\beta)$ where $x \in P$, $\alpha : a \to px$ and $\beta
: qx \to b$, such that $g(\beta)\phi_xf(\alpha) = \gamma$.  Informally,
such an object is a `factorisation of $\gamma$ through $\phi$'.  A morphism
$(\alpha_1,x_1,\beta_1) \to (\alpha_2,x_2,\beta_2)$ of such is an arrow
$\delta : x_1 \to x_2$ such that $p(\delta)\alpha_1 = \alpha_2$ and
$\beta_1 = \beta_2q(\delta)$.  Identities and compositions are inherited
from $P$.
\begin{lemma}\label{lem:combinatorial-exactness-Cat-fact-version}
  {\upshape \cite{Guitart:1980}}
  A lax square (\ref{eq:lax-square-in-Cat}) in $\Cat$ is exact if 
  and only if for all $a \in A$, $b \in B$, and $\gamma : fa \to gb$, 
  the category $\tn{Fact}_{\phi}(a,\gamma,b)$ defined above is connected.
  \qed
\end{lemma}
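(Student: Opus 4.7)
The plan is to apply Lemma~\ref{lem:exact-yoneda-formulation} and unwind the resulting pointwise left Kan extension condition into an explicit $\Set$-valued colimit. By that lemma, exactness of $\phi$ is equivalent to the composite $2$-cell of \eqref{eq:ple-for-exactness} exhibiting $C(f,1)\circ g$ as a pointwise left Kan extension of $\yoneda_A\circ p$ along $q$. Since $\PSh A$ is cocomplete with colimits computed pointwise, and left Kan extensions into a cocomplete category are computed by the standard comma-category colimit formula, this reduces, for each $b\in B$ and each $a\in A$, to the statement that the cocone
\[ \bigl\{A(a,px)\bigr\}_{(x,\beta)\,\in\, q\comma b}\;\longrightarrow\; C(fa,gb),\qquad \alpha\longmapsto g(\beta)\circ \phi_x\circ f(\alpha), \]
is a colimit in $\Set$.

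Next, I would write $\mathcal{E}$ for the Grothendieck construction of the $\Set$-valued functor $(x,\beta)\mapsto A(a,px)$ on $q\comma b$, so that the colimit in question is $\pi_0(\mathcal E)$. An object of $\mathcal{E}$ is a triple $(x,\alpha,\beta)$ with $\alpha:a\to px$ and $\beta:qx\to b$, and a morphism $(x_1,\alpha_1,\beta_1)\to(x_2,\alpha_2,\beta_2)$ is an arrow $\delta:x_1\to x_2$ of $P$ satisfying $p(\delta)\alpha_1=\alpha_2$ and $\beta_1=\beta_2q(\delta)$. This is exactly the data of $\tn{Fact}_{\phi}(a,\gamma,b)$, except that $\gamma$ is not fixed. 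Naturality of $\phi$ at $\delta$ gives $g(q\delta)\phi_{x_1}=\phi_{x_2}f(p\delta)$, so
\[ g(\beta_1)\phi_{x_1}f(\alpha_1)=g(\beta_2)g(q\delta)\phi_{x_1}f(\alpha_1)=g(\beta_2)\phi_{x_2}f(p\delta\,\alpha_1)=g(\beta_2)\phi_{x_2}f(\alpha_2); \]
hence the assignment $\mathcal{E}\to C(fa,gb)$, $(x,\alpha,\beta)\mapsto g(\beta)\phi_xf(\alpha)$, is constant on connected components, and its fibre over any given $\gamma$ is precisely the category $\tn{Fact}_{\phi}(a,\gamma,b)$ of the statement.

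This yields a decomposition $\pi_0(\mathcal E)=\coprod_{\gamma\in C(fa,gb)}\pi_0\tn{Fact}_{\phi}(a,\gamma,b)$, so the cocone map $\pi_0(\mathcal E)\to C(fa,gb)$ is a bijection if and only if each $\tn{Fact}_{\phi}(a,\gamma,b)$ has exactly one connected component, that is, is connected. Ranging over all $a$, $b$, $\gamma$ then gives the lemma. The step deserving care is the fibre identification just performed: it is the only place where naturality of $\phi$ (as opposed to a mere pointwise choice of components) really enters, and is what prevents connected components of $\mathcal{E}$ from crossing between different values of $\gamma$. Everything else is the standard unwinding of a pointwise left Kan extension into $\Set$ as $\pi_0$ of its category of elements.
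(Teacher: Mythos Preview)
The paper does not actually supply a proof of this lemma; it is stated with attribution to Guitart~\cite{Guitart:1980} and used as a black box. So there is no in-paper argument to compare against.

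Your argument is correct and is essentially the standard unwinding. One small point: by going through Lemma~\ref{lem:exact-yoneda-formulation} you implicitly import its hypotheses ($A$ and $P$ small, $f$ admissible), which are not stated in Lemma~\ref{lem:combinatorial-exactness-Cat-fact-version} itself. For the purposes of this paper that is harmless, since everything in sight is small; but if you wanted the statement in full generality you would need to argue the ``connected $\Rightarrow$ exact'' direction directly against an arbitrary pointwise left Kan extension (the same $\pi_0$-of-elements computation works, applied to the given cocone rather than the Yoneda one). The identification of the fibres of $\mathcal E \to C(fa,gb)$ with the categories $\tn{Fact}_\phi(a,\gamma,b)$, and the use of naturality of $\phi$ to see that the map is constant on components, are exactly right.
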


\begin{blanko}{Exact monoidal functors.}
In the usual nullary-binary way of writing tensor products in monoidal categories, a \emph{symmetric colax monoidal functor} $f : A \to B$ has coherence morphisms of the form
\[ \begin{array}{lccr} {\overline{f}_0 : fI \longrightarrow I} &&& {\overline{f}_{X,Y} : f(X \tensor Y) \longrightarrow fX \tensor fY} \end{array} \]
(in which $I$ denotes the unit of either $A$ or $B$) which are required to satisfy axioms that express compatibility with the coherences which define the symmetric monoidal structures on $A$ and $B$. Equivalently one can regard a symmetric colax monoidal structure on $f$ as comprising coherence morphisms
\[ \overline{f}_{X_1,...,X_n} : f(X_1 \tensor \cdots \tensor X_n) 
\longrightarrow f(X_1) \tensor \cdots \tensor f(X_n) \]
for each sequence $(X_1,...,X_n)$ of objects of $A$, whose naturality is expressed by the fact that they are the components of a natural transformation 
\[ \xygraph{!{0;(2,0):(0,.5)::}
{\freesym A}="p0" [r] {\freesym B}="p1" [d] {B.}="p2" [l] {A}="p3"
"p0":"p1"^-{\freesym(f)}:"p2"^-{\bigotimes}:@{<-}"p3"^-{f}:@{<-}"p0"^-{\bigotimes}
"p0" [d(.55)r(.4)] :@{=>}[r(.2)]^-{\overline{f}}} \]
We say that $f$ is \emph{exact} when this square is an exact square in the sense discussed above. In terms of the 2-monad $\freesym$, $(f,\overline{f})$ is a colax morphism of pseudo algebras, and in  \cite{Weber-Guitart}, the theory of exact colax morphisms of algebras is developed at the general level of a 2-monad on a 2-category with comma objects.
\end{blanko}

The following lemma is key to the interest in exactness in the present context.
The result is a special case of Theorem~2.4.4 of \cite{Weber-Guitart}. A similar result is obtained in the context of proarrow equipments in \cite{MelliesTabareau-TAlgTheoriesKan} and in a double categorical setting in \cite{Koudenburg-AlgKan}.

\begin{lemma}\label{lem:alg-Kan} {\upshape \cite{Weber-Guitart}}
  Let $f: A \to B$ be an exact symmetric colax monoidal functor.
  Then for any lax symmetric monoidal functor $g: A \to C$ (with $C$ assumed 
  algebraically cocomplete),
  the pointwise left Kan extension $\lan_f g$
  $$\xymatrix{
     A \ar[rr]^f \ar[rd]_g  \ar@{}[rrd]|{\displaystyle{\Rightarrow}}&& B  \ar@{..>}[ld]^{\lan_f g}\\
     & C &
  }$$
  is again naturally lax symmetric monoidal.  Furthermore, if $g$ is strong,
  then so is $\lan_f g$.
  \qed
\end{lemma}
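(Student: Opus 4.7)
The plan is to leverage exactness of $f$ to express both $h \circ \bigotimes_B$ and $\bigotimes_C \circ \freesym h$ as pointwise left Kan extensions along $\freesym f$, and then realise $\overline h$ as the comparison 2-cell between them induced by $\overline g$ via functoriality of $\lan_{\freesym f}$. First I would note that, since $h = \lan_f g$ is pointwise with unit $\psi : g \Rightarrow hf$, and $\overline f$ by hypothesis makes the coherence square an exact square, the pasted 2-cell
\[ \eta \;=\; (h \cdot \overline f) \circ (\psi \cdot \bigotimes_A) \colon\ g \circ \bigotimes_A \;\Longrightarrow\; h \circ \bigotimes_B \circ \freesym f \]
exhibits $h \circ \bigotimes_B$ as the pointwise Kan extension $\lan_{\freesym f}(g \circ \bigotimes_A)$; this is just the definition of exactness applied to our square.

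Next I would establish that $\bigotimes_C \circ \freesym h$ is itself the pointwise Kan extension $\lan_{\freesym f}(\bigotimes_C \circ \freesym g)$, with unit $\bigotimes_C \cdot \freesym \psi$. This relies on two preservation facts: the 2-monad $\freesym$ preserves pointwise left Kan extensions in $\Cat$, and post-composition with $\bigotimes_C$ preserves them, the latter being essentially the content of ``algebraically cocomplete'' (i.e.\ $\bigotimes_C$ preserves colimits in each tensor slot). With both sides realised as Kan extensions along the common functor $\freesym f$, the lax structure $\overline g : \bigotimes_C \circ \freesym g \Rightarrow g \circ \bigotimes_A$ then induces, by the universal property, a unique 2-cell $\overline h : \bigotimes_C \circ \freesym h \Rightarrow h \circ \bigotimes_B$ characterised by the compatibility $(\overline h \cdot \freesym f) \circ (\bigotimes_C \cdot \freesym \psi) = \eta \circ \overline g$.

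The lax symmetric monoidal axioms for $\overline h$ then follow from the corresponding axioms for $\overline g$ and $\overline f$ by the uniqueness clause of the universal property of Kan extensions: both sides of each axiom, after restriction along $\freesym f$, reduce to the same 2-cell built from $\overline g$ and $\overline f$, and so must agree globally. When $\overline g$ is invertible, the same construction applied to $\overline g^{-1}$ yields a two-sided inverse to $\overline h$, giving the strong case.

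The main technical obstacle is the preservation claim for $\freesym$ used above; this is not a general feature of 2-functors, and it is precisely where the abstract framework of \cite{Weber-Guitart} earns its keep, bypassing explicit preservation by the 2-monad via a criterion formulated in terms of comma objects in the ambient 2-category. An alternative concrete route is to work pointwise: using $hb = \operatorname{colim}_{f \downarrow b} g$ and the fact that $\bigotimes_C$ commutes with colimits in each slot, one defines $\overline h_{(b_1,\ldots,b_n)}$ directly on the induced colimit presentation of $hb_1 \otimes \cdots \otimes hb_n$ using $\overline f$ (to produce an object of $f \downarrow (b_1 \otimes \cdots \otimes b_n)$) and $\overline g$ (to produce the transition map); the strong case then requires the combinatorial form of exactness in Lemma~\ref{lem:combinatorial-exactness-Cat-fact-version} to guarantee the cofinality that upgrades $\overline h$ to an isomorphism.
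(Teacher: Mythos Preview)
The paper does not actually prove this lemma: it is stated with a \qed and attributed to \cite{Weber-Guitart} (Theorem~2.4.4 there). Your sketch is along the right lines and is essentially the argument one finds in that reference, so there is nothing to compare against in the present paper.

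One correction is worth making. You flag as the ``main technical obstacle'' that $\freesym$ should preserve pointwise left Kan extensions, and you suggest this is where the abstract machinery of \cite{Weber-Guitart} is needed. But in the setup of the present paper this is a non-issue: look at the paragraph immediately following the lemma, where the hypothesis ``$C$ algebraically cocomplete'' is unpacked. Its \emph{formal} content is precisely that whenever $\psi$ exhibits $h$ as a pointwise left Kan extension of $g$ along $f$, the composite $\bigotimes_C \cdot \freesym\psi$ exhibits $\bigotimes_C \circ \freesym h$ as a pointwise left Kan extension of $\bigotimes_C \circ \freesym g$ along $\freesym f$. So the combined preservation claim you need for your second step is \emph{assumed}, not something to be proved; you do not need $\freesym$ to preserve Kan extensions on its own, nor do you need the alternative ``concrete route'' via cofinality. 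With that reading of the hypothesis, your argument (induce $\overline h$ by the universal property of $\lan_{\freesym f}$ applied to $\overline g$, verify the axioms by uniqueness, and invert in the strong case by applying the same to $\overline g^{-1}$) is complete and correct.
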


The condition that $C$ is algebraically cocomplete (with respect to $f$) means first
of all that it has enough colimits for the left Kan extension in question to 
exist, and second, that these colimits are preserved by the tensor product in 
each variable.  More formally, 
whenever $\psi$ exhibits $h$ as
a pointwise left Kan extension of $g$ along $f$ as on the left, then the
composite on the right
\[ \xygraph{{\xybox{\xygraph{{A}="l" [r(2)] {B}="r" [dl] {C}="b" 
"l":"r"^-{f}:"b"^-{h}:@{<-}"l"^-{g} [d(.5)r(.85)] :@{=>}[r(.3)]^-{\psi}}}} [r(5)]
{\xybox{\xygraph{{\freesym A}="l" [r(2)] {\freesym B}="r" [dl] {\freesym C}="b" [d] 
{C}="bb" "l":"r"^-{\freesym f}:"b"^-{\freesym h}:@{<-}"l"^-{\freesym g} 
"b":"bb"^{\bigotimes} "l" 
[d(.5)r(.85)] :@{=>}[r(.3)]^-{\freesym\psi}}}}} \]
exhibits $\bigotimes \circ\freesym h$ as a pointwise left Kan extension of 
$\bigotimes \circ\freesym g$ 
along $\freesym f$.

\begin{blanko}{Day convolution tensor product.}
It is well known that the symmetric monoidal structure on $A$ 
(assumed to be small)
extends
essentially uniquely to one on $\PSh A$, for which the tensor product is
cocontinuous in each variable.  This tensor product $\Asterisk : \freesym \PSh
A \to \PSh A$ is called \emph{Day convolution} \cite{Day-Convolution}. It is
folklore that the Day convolution tensor product can also be characterised as a
pointwise left Kan extension as in the following result, which is nothing more
than a translation of the universal property of convolution as expressed by
Im and Kelly \cite{Im-Kelly}, in these terms.
\end{blanko}

\begin{prop}\label{prop:Day-convolution}
{\upshape \cite{Im-Kelly}}
  For $A$ a small symmetric monoidal category, the Day convolution tensor product
  $\Asterisk$ on $\PSh A$ can be
  characterised as the pointwise left Kan extension of $\yoneda_A\circ
  \bigotimes$ along $\freesym \yoneda_A$,
    \[
  \xygraph{!{0;(2,0):(0,.5)::} 
  {\freesym A}="p0" [r] {\freesym \PSh A}="p1"
  [d] {\PSh A .}="p2" [l] A="p3" 
  "p0":"p1"^-{\freesym 
  \yoneda_A}:"p2"^-{\Asterisk}:@{<-}"p3"^-{\yoneda_A}:@{<-}"p0"^-{\bigotimes} 
  "p0" [d(.55)r(.4)] :@{=>}[r(.2)]^-{\overline{\yoneda}_A}}
  \]
  Furthermore, this square (invertible since $\freesym \yoneda_A$ is fully 
  faithful) constitutes the coherence data making $\yoneda_A$ a symmetric strong monoidal
  functor.  Finally, the following universal property holds (usually taken as the
  defining property of the Day convolution tensor product):
  For any cocomplete symmetric monoidal category $X$, composition with 
  $\yoneda_A$ gives equivalences of categories
  \begin{equation*}
  \begin{array}{lccr} {\tn{Cocts}\kat{SMC}_c(\PSh A,X) \catequiv \kat{SMC}_c (A,X)} 
    &&& {\tn{Cocts}\kat{SMC}(\PSh A,X) \catequiv \kat{SMC}(A,X).} \end{array}
  \end{equation*}
\end{prop}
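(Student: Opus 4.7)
The plan is to prove the three assertions in sequence, leveraging the standard coend calculus for pointwise left Kan extensions together with Lemma~\ref{lem:alg-Kan}.

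\emph{First}, to identify the claimed pointwise left Kan extension with Day convolution, evaluate via the coend formula. At $(F_1,\dots,F_n) \in \freesym \PSh A$ and $c \in A\op$, the Kan extension equals
\[
\int^{(a_1,\dots,a_k)\in\freesym A} \freesym\PSh A\bigl((\yoneda_A a_1,\dots,\yoneda_A a_k),(F_1,\dots,F_n)\bigr) \times A(c,a_1\otimes\cdots\otimes a_k).
\]
By the explicit description of $\freesym\PSh A$, the hom-set is empty unless $k=n$, in which case it equals $\coprod_{\rho\in\Sigma_n}\prod_i F_{\rho i}(a_i)$ via Yoneda. The sum over $\rho$ absorbs the $\Sigma_n$-action on $\freesym A$, reducing the coend to one over $A^n$ and recovering the standard Day convolution formula.

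\emph{Second}, for invertibility and the strong monoidal structure on Yoneda, the key observation is that the $2$-functor $\freesym$ preserves fully faithful functors: a morphism in $\freesym A$ consists of a permutation together with a tuple of morphisms of $A$, so full faithfulness is inherited componentwise. Hence $\freesym\yoneda_A$ is fully faithful, and the pointwise left Kan extension along it restricts back to the original functor via an invertible $2$-cell. The components of this isomorphism are the structure maps
\[
\yoneda_A a_1 \Asterisk \cdots \Asterisk \yoneda_A a_n \;\cong\; \yoneda_A(a_1\otimes\cdots\otimes a_n),
\]
and the coherence axioms for a symmetric strong monoidal structure on $\yoneda_A$ follow from the uniqueness clause of the Kan extension's universal property.

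\emph{Third}, for the universal property, begin from the ordinary free-cocompletion equivalence $\tn{Cocts}(\PSh A,X) \catequiv \tn{Cat}(A,X)$ via restriction along $\yoneda_A$ (with inverse $f \mapsto \lan_{\yoneda_A} f$). To upgrade this to the (co)lax and strong monoidal settings, observe that the Yoneda square constructed in Step~2 is invertible, hence trivially exact. By Lemma~\ref{lem:alg-Kan}, for any (co)lax symmetric monoidal $f : A \to X$ the left Kan extension $\tilde f := \lan_{\yoneda_A} f$ inherits a (co)lax symmetric monoidal structure that is strong whenever $f$ is, and conversely restriction along $\yoneda_A$ sends (co)lax monoidal cocontinuous functors to (co)lax monoidal functors. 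These assignments are mutually inverse because any cocontinuous functor on $\PSh A$, together with any monoidal structure, is determined up to canonical isomorphism by its restriction to representables (using that $\Asterisk$ is cocontinuous in each variable, as is visible from Step~1). The main obstacle is the coherence bookkeeping in this last step: one must verify that the (co)lax structures on $\tilde f$ and on $f$ are honestly inverse as pseudo-algebra-morphism data, not merely at the underlying level; this is precisely what Lemma~\ref{lem:alg-Kan}, invoked for the exact Yoneda square, packages for us. Steps~1 and~2 then reduce to direct calculation.
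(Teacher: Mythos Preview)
Your route diverges from the paper's in two places.  For the first assertion, the paper avoids a direct coend over $\freesym A$ by pasting with the exact naturality square of the inclusion $i:\freemon\hookrightarrow\freesym$ (appealing to \cite{Weber-PolynomialFunctors,Weber-Guitart}), thereby reducing to a coend over $\freemon A$ where the Day formula is immediate.  Your direct computation is also correct, but the phrase ``the sum over $\rho$ absorbs the $\Sigma_n$-action'' conceals the actual work: one must check that the comma category $\freesym\yoneda_A \comma (F_j)$ is equivalent, for the purposes of the colimit, to the category of elements of $\prod_i F_i$ over $A^n$ (every object being isomorphic via a pure permutation to one with $\rho=\id$), and that the comparison respects the diagram up to the symmetry isomorphisms of $A$.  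For the third assertion the paper simply cites Im--Kelly, whereas you attempt an internal derivation via Lemma~\ref{lem:alg-Kan}.

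There is, however, a genuine error in Step~3.  The inference ``invertible, hence trivially exact'' is false: invertibility of the $2$-cell in a lax square does not imply Guitart exactness.  What is true, and what you should invoke instead, is that the square established in Step~1 exhibits a pointwise left Kan extension whose codomain is $\PSh A$ and whose bottom edge is $\yoneda_A$, so it is exact by Corollary~\ref{cor:exact-ple-squares}.  Once Step~1 is in hand this is immediate, so the repair is easy, but the stated reason is wrong.  A second, smaller gap: Lemma~\ref{lem:alg-Kan} as formulated treats \emph{lax} and strong monoidal $g$, not colax, so it does not directly yield the $\kat{SMC}_c$ half of the universal property; for that you need either a dual form of the lemma or the direct argument that both sides are separately cocontinuous in each presheaf variable and hence determined by their restriction to representables.
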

Here $\kat{SMC}$ is the $2$-category of symmetric monoidal categories with symmetric
strong monoidal functors, while $\kat{SMC}_c$ has also symmetric colax monoidal
functors.  The prefixes $\tn{Cocts}$ indicate the full subcategories
spanned by cocomplete symmetric monoidal categories whose tensor product
preserves colimits in both variables.
\begin{proof}
For any category $C$, we denote by $\freemon C$ the free (strict) monoidal
category on $C$.  Explicitly $\freemon C$ is the subcategory of $\freesym C$
containing all the objects, but just the morphisms whose underlying permutation
is an identity.  The inclusions $i_C : \freemon C \to \freesym C$ are the
components of a 2-natural transformation $i : \freemon \to \freesym$ which by
the results of \cite{Weber-PolynomialFunctors}, conforms to the hypotheses of
Proposition~4.6.2 of \cite{Weber-Guitart}.  Thus for any functor $f :C \to D$,
the corresponding naturality square of $i$ on the left
\[ \xygraph{!{0;(2,0):(0,.5)::} 
{\freemon C}="p0" [r] {\freemon D}="p1" [d] {\freesym D}="p2" [l] {\freesym C}="p3"
"p0":"p1"^-{\freemon f}:"p2"^-{i_D}:@{<-}"p3"^-{\freesym f}:@{<-}"p0"^-{i_C}
"p1" [r(1.5)d(.5)]
{\freesym A}="q0" [r] {\freesym \PSh A}="q1" [d] {\PSh A}="q2" [l] A="q3" 
"q0" [u] {\freemon A}="q4" [r] {\freemon \PSh A}="q5"
"q0":"q1"^-{\freesym\yoneda_A}:"q2"^-{\Asterisk}:@{<-}"q3"^-{\yoneda_A}:@{<-}"q0"^-{\bigotimes} "q4" (:"q0"_-{i_A}, :"q5"^-{\freemon\yoneda_A}:"q1"^-{i_{\PSh A}})
"q0" [d(.55)r(.4)] :@{=>}[r(.2)]^-{\overline{\yoneda}_A}} \]
is exact, and so the composite square on the right exhibits $\Asterisk \circ i_{\PSh A}$ as a pointwise left Kan extension, and this functor has the same object map as $\Asterisk$. Computing the left Kan extension on the left in the previous display as a coend in the usual way, one recovers the usual formula for the Day tensor product. Thus the result follows from \cite{Im-Kelly}.
\end{proof}

With Corollary~\ref{cor:exact-ple-squares}, we arrive at the following.
\begin{cor}\label{cor:y-exact}
  $\yoneda_A : (A,\tensor) \to (\PSh A,\Asterisk)$ is exact.
  \qed
\end{cor}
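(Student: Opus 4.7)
The plan is to derive the corollary as an immediate application of Corollary~\ref{cor:exact-ple-squares} to the coherence square of $\yoneda_A$ identified in Proposition~\ref{prop:Day-convolution}. Recall that Proposition~\ref{prop:Day-convolution} furnishes a square with top edge $\freesym \yoneda_A : \freesym A \to \freesym \PSh A$, right edge $\Asterisk : \freesym \PSh A \to \PSh A$, bottom edge $\yoneda_A : A \to \PSh A$, and left edge $\bigotimes : \freesym A \to A$, filled by the invertible $2$-cell $\overline{\yoneda}_A$ that encodes the symmetric strong monoidal structure of $\yoneda_A$. The same proposition asserts that this square exhibits $\Asterisk$ as a pointwise left Kan extension of $\yoneda_A \circ \bigotimes$ along $\freesym \yoneda_A$.

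First I would match the data with the hypotheses of Corollary~\ref{cor:exact-ple-squares}: in the notation of that corollary, take $P := \freesym A$, $B := \freesym \PSh A$, $q := \freesym \yoneda_A$, $p := \bigotimes$, and $g := \Asterisk$. Smallness of $P$ and of $A$ is automatic from the standing assumption that $A$ is small, since $\freesym$ preserves smallness. The pointwise left Kan extension hypothesis is then precisely what Proposition~\ref{prop:Day-convolution} supplies. Corollary~\ref{cor:exact-ple-squares} therefore concludes that the $2$-cell $\overline{\yoneda}_A$ is exact.

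By definition of exact symmetric monoidal functor (as stated in the paragraph preceding Lemma~\ref{lem:alg-Kan}), this is exactly the assertion that $\yoneda_A : (A, \tensor) \to (\PSh A, \Asterisk)$ is exact. There is no substantive obstacle in the argument: the result is a formal consequence of the fact, established in Proposition~\ref{prop:Day-convolution}, that Day convolution is defined as a pointwise left Kan extension along $\freesym \yoneda_A$ whose structural $2$-cell happens to coincide with the monoidal coherence of $\yoneda_A$ — and Corollary~\ref{cor:exact-ple-squares} is tailor-made to extract exactness from such a situation.
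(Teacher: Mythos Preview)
Your proposal is correct and follows exactly the same approach as the paper: it applies Corollary~\ref{cor:exact-ple-squares} to the pointwise left Kan extension square furnished by Proposition~\ref{prop:Day-convolution}, which is precisely what the paper indicates with its one-line ``With Corollary~\ref{cor:exact-ple-squares}, we arrive at the following.''
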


\begin{blanko}{Generalities.}
  For any functor $f: A \to B$ between small categories, we have the
  $2$-cells
  $$
  \xymatrix @=1.4pc  {
     A \ar[rr]^f\ar[rrdd]_{\yoneda_A} \ar@{}[rrrd]|{\overset{\chi^f}\Rightarrow}&& B 
     \ar[rr]^{\yoneda_B} \ar[dd]|{B(f,1)} && \PSh B  \ar@{}[llld]|{\overset{\iota^f}\Rightarrow}
     \ar[lldd]^{f^*} \\
     &&&&\\
     && \PSh A && 
  }
  \qquad\qquad
  \xymatrix{
     A \ar[r]^{\yoneda_A}\ar[d]_f   \ar@{}[rd]|{\overset{l^f}\Rightarrow}
     & \PSh A \ar[d]^{f_!} \\
     B \ar[r]_{\yoneda_B} & \PSh B
  }
  $$
  exhibiting $B(f,1)$ as the pointwise left Kan extension of $\yoneda_A$ along $f$, 
  and $f^*$ (restriction along $f$) as the pointwise left Kan extension of 
  $B(f,1)$ along $\yoneda_B$.    Finally, $l^f$ exhibits $f_!$ (the left adjoint to 
  $f^*$) as
  the pointwise left Kan extension of $\yoneda_B\circ f$ along
  $\yoneda_A$.
Note that $l^f$ is an exact square by Corollary~\ref{cor:exact-ple-squares},
and that both $\iota^f$ and $l^f$ are invertible, since $\yoneda_B$ and $\yoneda_A$ are fully faithful.
\end{blanko}

Returning to our situation of a symmetric colax monoidal functor between small
symmetric monoidal categories $(f,\overline{f}) : A \to B$, by
Lemma~\ref{lem:alg-Kan} $f_!$ gets a symmetric colax monoidal structure from
that of $\yoneda_B\circ f$, since $\yoneda_A$ is exact by Proposition~\ref{prop:Day-convolution}.
The colax coherence datum $\overline{f}_!$ (which we don't make explicit here, and which is invertible if and only if $\overline{f}$ is) induces, by taking mates via $f_!
\isleftadjointto f^*$, the coherence 2-cell $\overline{f}^*$ making $f^*$ a lax
monoidal functor.
Moreover $B(f,1)$ gets a unique monoidal structure
making $\iota^f$ an invertible monoidal natural transformation.
In the context just described we have the following alternative
characterisations of exactness of the symmetric colax monoidal functor
$(f,\overline{f})$.

\begin{thm}\label{thm:exactness-convolution-characterisations} 
  The following statements are equivalent for a colax symmetric monoidal functor
  $f : A \to B$ (assuming $A$ small and $f$ admissible).
  \begin{parenenumerate}
  \item $f$ is exact.
  \label{thmcase:exact}
  \item For any algebraically cocomplete symmetric monoidal category $X$ and any 
	symmetric strong monoidal functor
        $g : A \to X$, the pointwise left Kan extension of $g$ along $f$ 
	is symmetric strong monoidal.
  \label{thmcase:left-extend-along-f}
  \item $B(f,1) : B \to \PSh A$ is symmetric strong monoidal.
  \label{thmcase:Bf1-strong}
  \item $f^* : \PSh B \to \PSh A$ is symmetric strong monoidal.
  \label{thmcase:f*-strong}
  \end{parenenumerate}
\end{thm}

\begin{proof}
(\ref{thmcase:exact}) $\implies$ (\ref{thmcase:left-extend-along-f}): 
The assumptions imply that the left Kan extension exists.  The 
statement now follows from Lemma~\ref{lem:alg-Kan}.

(\ref{thmcase:left-extend-along-f}) $\implies$ (\ref{thmcase:Bf1-strong}): 
By Proposition~\ref{prop:Day-convolution}, $\yoneda_A$ is strong monoidal.
Since $\chi^f$ exhibits $B(f,1)$ as a pointwise left Kan extension of $\yoneda_A$
along $f$, we conclude by the assumption~\eqref{thmcase:left-extend-along-f} that $B(f,1)$ is 
strong monoidal.

(\ref{thmcase:Bf1-strong}) $\implies$ (\ref{thmcase:f*-strong}): 
By Corollary~\ref{cor:y-exact}, $\yoneda_B$ is exact, and $B(f,1)$ is 
strong monoidal by assumption.  But  $\iota^f$ exhibits $f^*$ as the pointwise left Kan extension
of $B(f,1)$ along $\yoneda_B$, so again by Lemma~\ref{lem:alg-Kan} we 
conclude that $f^*$ is strong monoidal.

(\ref{thmcase:f*-strong}) $\implies$ (\ref{thmcase:exact}): From
\cite{StreetWalters-YonedaStructures, Weber-2Toposes} the unit $u$ of the
adjunction $f_!  \isleftadjointto f^*$ is the unique 2-cell satisfying the
equation on the left

\vspace*{-24pt}

\[ \xygraph{!{0;(1.5,0):(0,.75)::} 
A="p0" [r] {\PSh A}="p1" [d] {\PSh B}="p2" [l] {\PSh A}="p3"
"p0":"p1"^-{\yoneda_A}:"p2"^-{f_!}:"p3"^-{f^*}:@{<-}"p0"^-{\yoneda_A}
"p0":"p2"|(.45){\yoneda_Bf}
"p3" [u(.15)r(.2)] :@{=>}[r(.15)]^-{\chi^{\yoneda_Bf}}
"p1" [d(.35)l(.35)] :@{=>}[r(.15)]^-{l^f}
"p1" [r]
A="q0" [r] {\PSh A}="q1" [d] {\PSh B}="q2" [l] {\PSh A}="q3"
"q0":"q1"^-{\yoneda_A}:"q2"^-{f_!}:"q3"^-{f^*}:@{<-}"q0"^-{\yoneda_A}
"q1":"q3"|-{1_{\PSh A}}
"q0" [d(.35)r(.2)] :@{=>}[r(.15)]^-{\id}
"q2" [u(.35)l(.3)] :@{=>}[r(.15)]^-{u}
"p1" [d(.5)r(.5)] {=}
"q1" [r(1.5)u(.5)]
{\SMCMnd \PSh A}="r0" [r] {\SMCMnd \PSh B}="r1" [d] {\PSh B}="r2" [d] {\PSh A}="r3" [ul] {\PSh A}="r4"
"r0":"r1"^-{\SMCMnd f_!}:"r2"^-{\Asterisk}:"r3"^-{f^*}:@/^{1pc}/@{<-}"r4"^-{1_{\PSh A}}:@{<-}"r0"^-{\Asterisk} "r4":"r2"^-{f_!}
"r0" [d(.5)r(.4)] :@{=>}[r(.2)]^-{\overline{f}_!}
"r4" [d(.4)r(.4)] :@{=>}[r(.2)]^-{u}
"r1" [r(1.2)]
{\SMCMnd \PSh A}="s0" [dr] {\SMCMnd \PSh B}="s1" [d] {\PSh B}="s2" [l] {\PSh A}="s3" [u] {\SMCMnd \PSh A}="s4"
"s0":@/^{1pc}/"s1"^-{\SMCMnd f_!}:"s2"^-{\Asterisk}:"s3"^-{f^*}:@{<-}"s4"^-{\Asterisk}:@{<-}"s0"^-{1_{\SMCMnd \PSh A}} "s1":"s4"^-{\SMCMnd f^*}
"s0" [d(.6)r(.4)] :@{=>}[r(.2)]^-{\SMCMnd u}
"s4" [d(.7)r(.4)] :@{=>}[r(.2)]^-{\overline{f}{}^*}
"r1" [d(1)r(.6)] {=}} \]
and $\overline{f}_!$ and $\overline{f}{}^*$ determine each other uniquely by the
equation on the right.  Being the unit of an adjunction, $\SMCMnd u$ is an
absolute pointwise left Kan extension, and since $\overline f{}^*$ is assumed to 
be invertible (\ref{thmcase:f*-strong}), the
common composite of the equation on the right in the previous display exhibits
$f^* \circ \Asterisk_B$ as the pointwise left Kan extension of
$\Asterisk_A$ along $\SMCMnd f_!$. 
Now paste on the left with $\overline{\yoneda}_A$ which is a pointwise left 
Kan extension by
Proposition~\ref{prop:Day-convolution}.
The resulting pointwise left Kan extension can be rewritten as follows:
\[ \xygraph{!{0;(1.25,0):(0,1)::}
{\SMCMnd A}="p0" [ur] {\SMCMnd \PSh A}="p1" [dr] {\SMCMnd \PSh B}="p2" [d] {\PSh B}="p3" [dl] {\PSh A}="p4" [ul] A="p5" [u(.75)r] {\PSh A}="p6"
"p0":"p1"^-{\SMCMnd \yoneda_A}:"p2"^-{\SMCMnd f_!}:"p3"^-{\Asterisk}:"p4"^-{f^*}:@{<-}"p5"^-{\yoneda_A}:@{<-}"p0"^-{\bigotimes}
"p6" (:@{<-}"p1"^-{\Asterisk},:"p3"^-{f_!},:"p4"|-{1_{\PSh A}},:@{<-}"p5"_-{\yoneda_A})
"p0" [r(.45)d(.1)] :@{=>}[r(.1)u(.2)]_-{\overline{\yoneda}_A}
"p2" [l(.55)d(.1)] :@{=>}[r(.1)u(.2)]^-{\overline{f}_!}
"p5" [r(.6)d(.1)] :@{=>}[r(.15)u(.15)]^-{\id}
"p3" [l(.7)] :@{=>}[r(.2)]^-{u}
"p2" [r(1.75)]
{\SMCMnd A}="q0" [ur] {\SMCMnd \PSh A}="q1" [dr] {\SMCMnd \PSh B}="q2" [d] {\PSh B}="q3" [dl] {\PSh A}="q4" [ul] A="q5"
"q1" [d] {\PSh A}="q6" [d] B="q7"
"q0":"q1"^-{\SMCMnd \yoneda_A}:"q2"^-{\SMCMnd f_!}:"q3"^-{\Asterisk}:"q4"^-{f^*}:@{<-}"q5"^-{\yoneda_A}:@{<-}"q0"^-{\bigotimes}
"q6" (:@{<-}"q1"^-{\Asterisk},:"q3"^-{f_!},:@{<-}"q5"_-{\yoneda_A})
"q7" (:"q3"^-{\yoneda_B},:@{<-}"q5"_-{f})
"q0" [r(.45)d(.05)] :@{=>}[r(.1)u(.2)]_-{\overline{\yoneda}_A}
"q2" [l(.55)d(.1)] :@{=>}[r(.1)u(.2)]^-{\overline{f}_!}
"q7" [u(.35)l(.05)] :@{=>}[r(.1)u(.2)]_-{l^f}
"q7" [d(.5)l(.125)] :@{=>}[r(.25)]^-{\chi^{\yoneda_B f}}
"q2" [r(1.75)]
{\SMCMnd A}="r0" [ur] {\SMCMnd \PSh A}="r1" [dr] {\SMCMnd \PSh B}="r2" [d] {\PSh B}="r3" [dl] {\PSh A}="r4" [ul] A="r5"
"r1" [d] {\SMCMnd B}="r6" [d] B="r7"
"r0":"r1"^-{\SMCMnd \yoneda_A}:"r2"^-{\SMCMnd f_!}:"r3"^-{\Asterisk}:"r4"^-{f^*}:@{<-}"r5"^-{\yoneda_A}:@{<-}"r0"^-{\bigotimes}
"r6" (:"r2"^-{\SMCMnd \yoneda_B},:"r7"_-{\bigotimes},:@{<-}"r0"_-{\SMCMnd f})
"r7" (:"r3"^-{\yoneda_B},:"r4"|(.51){B(f,1)},:@{<-}"r5"_-{f})
"r6" [u(.4)l(.15)] :@{=>}[r(.1)u(.2)]_-{\SMCMnd l^f}
"r0" [d(.5)r(.3)] :@{=>}[r(.2)]^-{\overline{f}}
"r6" [d(.5)r(.4)] :@{=>}[r(.2)]^-{\overline{\yoneda}_B}
"r5" [d(.35)r(.5)] :@{=>}[r(.2)]^-{\chi^f}
"r7" [d(.35)r(.25)] :@{=>}[r(.2)]^-{\iota^f}
"p2" [d(.5)r(.875)] {=} "q2" [d(.5)r(.875)] {=}}
\]
and
since $\SMCMnd l^f$ is invertible, we conclude that already 
\[ 
\xygraph{!{0;(1.5,0):(0,.8)::}
{\SMCMnd A}="r0" [r(2)] {\SMCMnd \PSh B}="r2" [d] {\PSh B}="r3" [dl] {\PSh A}="r4" [ul] A="r5"
"r0" [r] {\SMCMnd B}="r6" [d] B="r7"
"r2":"r3"^-{\Asterisk}:"r4"^-{f^*}:@{<-}"r5"^-{\yoneda_A}:@{<-}"r0"^-{\bigotimes}
"r6" (:"r2"^-{\SMCMnd \yoneda_B},:"r7"_-{\bigotimes},:@{<-}"r0"_-{\SMCMnd f})
"r7" (:"r3"^-{y_B},:"r4"|(.51){B(f,1)},:@{<-}"r5"_-{f})
"r0" [d(.5)r(.3)] :@{=>}[r(.2)]^-{\overline{f}}
"r6" [d(.5)r(.4)] :@{=>}[r(.2)]^-{\overline{\yoneda}_B}
"r5" [d(.35)r(.5)] :@{=>}[r(.2)]^-{\chi^f}
"r7" [d(.35)r(.25)] :@{=>}[r(.2)]^-{\iota^f}
}  
\]
exhibits $f^* \circ \Asterisk_B$ as a pointwise left Kan extension of
$\yoneda_A \circ \bigotimes_A$ along $\SMCMnd \yoneda_B \circ \freesym f$.  But
already $\iota^f$ is a pointwise left Kan extension, and $\overline{\yoneda}_B$
is an exact square, so the whole right-hand part of the diagram is a pointwise left Kan
extension.  Since furthermore $\SMCMnd \yoneda_B$ is fully faithful, we can
cancel that right-hand part away (e.g.~by \cite{Kelly:basic-enriched}, Theorem
4.47), so in conclusion also
\[
\xygraph{!{0;(1.5,0):(0,.8)::}
{\SMCMnd A}="s0" [r] {\SMCMnd B}="s6" [d] B="s7" [l(.5)d] {\PSh A}="s4" [l(.5)u] A="s5"
"s4":@{<-}"s5"^-{\yoneda_A}:@{<-}"s0"^-{\bigotimes}
"s6" (:"s7"^-{\bigotimes},:@{<-}"s0"_-{\SMCMnd f})
"s7" (:"s4"^{B(f,1)},:@{<-}"s5"_-{f})
"s0" [d(.5)r(.4)] :@{=>}[r(.2)]^-{\overline{f}}
"s5" [d(.4)r(.4)] :@{=>}[r(.2)]^-{\chi^f}}  ,
\]
 is a pointwise 
left Kan extension.  It now follows from
Lemma~\ref{lem:exact-yoneda-formulation} that $f$ is exact.

Note that the implication (\ref{thmcase:f*-strong}) $\implies$
(\ref{thmcase:left-extend-along-f}) was established already by
Getzler~\cite{Getzler:0701767}, and in fact can be extracted from 
Bunge--Funk~\cite{Bunge-Funk:1999}, Proposition~1.5, as pointed out by the 
anonymous referee.
\end{proof}

\begin{cor}
  For a symmetric monoidal functor $\tau:\freesym C \to M$, axiom (2) of being
  a regular pattern is equivalent to being exact.
  \qed
\end{cor}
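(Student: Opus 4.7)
The plan is simply to recognise this corollary as an immediate specialisation of the equivalence (\ref{thmcase:exact}) $\Leftrightarrow$ (\ref{thmcase:f*-strong}) of Proposition~\ref{thm:exactness-convolution-characterisations}. Setting $A = \freesym C$, $B = M$ and $f = \tau$, condition (\ref{thmcase:f*-strong}) reads precisely: ``$\tau^* : \PSh M \to \PSh{\freesym C}$ is symmetric strong monoidal for Day convolution'', which is verbatim axiom (2) in the definition of regular pattern (\ref{def:rpat-intro}). Therefore exactness of $\tau$ is equivalent to this axiom.

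The only substantive points to verify are that the standing hypotheses of the proposition are satisfied: namely that $\freesym C$ is small and that $\tau$ is admissible. The first holds because we implicitly work with small $C$ throughout (so $\freesym C$ is small too), and the second is automatic as soon as $M$ is locally small, since admissibility of $\tau$ just asks that $M(\tau a, m)$ be a set for every $a \in \freesym C$ and $m \in M$. Both are part of the standing conventions for regular patterns.

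There is essentially no obstacle to overcome here: all of the work is concentrated in Proposition~\ref{thm:exactness-convolution-characterisations}, and the corollary is really just a change of notation. Its purpose is to make the transition from Getzler's Day-convolution formulation to the Guitart-exactness perspective explicit, so that in the subsequent sections we may freely substitute ``exact'' for ``axiom~(2)'' and exploit the alternative characterisations (\ref{thmcase:left-extend-along-f}) and (\ref{thmcase:Bf1-strong}) when convenient, in particular to analyse the hereditary condition in Section~\ref{sec:Hereditary}.
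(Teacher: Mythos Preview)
Your proposal is correct and matches the paper's approach exactly: the corollary carries a \qed with no proof, so the paper treats it as an immediate specialisation of Proposition~\ref{thm:exactness-convolution-characterisations}, precisely as you explain. Your remarks on the size hypotheses are a welcome addition but are indeed part of the standing conventions.
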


\begin{blanko}{Morphisms of regular patterns.}
  Recall (from \ref{RPat}) that a morphism of regular patterns is a 
  diagram of symmetric strong monoidal functors
  $$\xymatrix @R=10pt {
     \freesym C_1 \ar[r]^{\tau_1}\ar[dd]_{\freesym f} 
	 & M_1 \ar[dd]^g \\
	 \ar@<5pt>@{}[r]|{\omega}\ar@<-3pt>@{}[r]|{\simeq}&
	 \\
     \freesym C_2 \ar[r]_{\tau_2} & M_2 ,
  }$$
  where $\omega$ is an invertible monoidal natural transformation.
\end{blanko}

\begin{prop}
  Every such $g:M_1\to M_2$ is exact.
\end{prop}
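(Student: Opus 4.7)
The plan is to apply the characterisation of exactness in Proposition~\ref{thm:exactness-convolution-characterisations}(\ref{thmcase:f*-strong}), showing that $g^* : \PSh{M'} \to \PSh M$ is symmetric strong monoidal for Day convolution. Denote the left vertical functor by $\freesym F$; applying $\PSh{(-)}$ to the commutative square yields an equation of lax symmetric monoidal functors
\[
\tau^* \circ g^* \;=\; (\freesym F)^* \circ \tau'{}^* : \PSh{M'} \longrightarrow \PSh{\freesym C}.
\]

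Both $\tau^*$ and $\tau'{}^*$ are strong monoidal by the regular-pattern hypothesis (Proposition~\ref{thm:exactness-convolution-characterisations} again). I would then show that $(\freesym F)^*$ is strong monoidal as well. Indeed, $\freesym F$ is a symmetric strict monoidal functor, so its coherence square is the naturality square of the monad multiplication $\muC : \freesym\freesym \to \freesym$ at $F$, and such naturality squares are exact for the polynomial monad $\freesym$ (the same machinery already used in the proof of Proposition~\ref{prop:Day-convolution}, cf.~\cite{Weber-PolynomialFunctors,Weber-Guitart}). Proposition~\ref{thm:exactness-convolution-characterisations} then makes $(\freesym F)^*$ strong, and consequently the composite $\tau^* \circ g^* = (\freesym F)^* \circ \tau'{}^*$ is strong monoidal.

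To conclude, write $\lambda_{X,Y} : g^*(X) \Asterisk g^*(Y) \to g^*(X \Asterisk Y)$ for the lax coherence of $g^*$. The lax coherence of the composite $\tau^* \circ g^*$ factors as the invertible coherence isomorphism of the strong $\tau^*$ followed by $\tau^*(\lambda_{X,Y})$, so strongness of the composite forces $\tau^*(\lambda_{X,Y})$ to be invertible (and an analogous argument covers the unit constraint). Finally, $\tau^*$ reflects isomorphisms: essential surjectivity of $\tau$ combined with naturality of a given $\alpha \in \PSh M$ applied to any chosen isomorphism $m \iso \tau(c)$ forces $\alpha_m$ to be invertible whenever all $\alpha_{\tau(c)}$ are. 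Hence $\lambda_{X,Y}$ is itself invertible, $g^*$ is strong monoidal, and $g$ is exact.

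The main obstacle is the intermediate step that $\freesym F$ is exact; the polynomial-monad machinery handles it cleanly by reference, but a direct combinatorial proof is also possible via Lemma~\ref{lem:combinatorial-exactness-Cat-fact-version}: for each $\gamma : \freesym F(a) \to \muC(b)$ the factorisation category admits a canonical object obtained by partitioning $a$ into blocks matching those of $b$ according to the permutation underlying $\gamma$, and every other factorisation admits a morphism to this one, yielding the required connectedness.
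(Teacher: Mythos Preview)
Your argument is correct and is essentially the paper's own proof unpacked. The paper isolates your final cancellation step as a separate lemma (Lemma~\ref{lem:exexcons}): from a commuting triangle $f = g \circ u$ of symmetric strong monoidal functors with $f$ exact and $u$ exact and bijective on objects, one concludes $g$ exact by exactly your reasoning---pass to the triangle of pullback functors, note $f^*$ and $u^*$ are strong monoidal, and use that $u^*$ is conservative to deduce the lax coherences of $g^*$ are invertible. The only cosmetic differences are that the paper works with the triangle $g \circ \tau = \tau' \circ \freesym F$ rather than the square, and obtains conservativity of $\tau^*$ from ``bijective on objects $\Rightarrow$ $\tau^*$ monadic'' rather than from your direct essential-surjectivity argument; both are fine. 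Your citation for exactness of $\freesym F$ matches the paper's (Corollary~4.6.6 of \cite{Weber-Guitart}).
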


\begin{proof}
  The free functor $\freesym C_1 \to \freesym C_2$ is exact by Corollary
  4.6.6 of \cite{Weber-Guitart}.  The two functors $\tau_1$ and $\tau_2$
  are exact by assumption, and $\tau_1$ is furthermore bijective on
  objects.  It now follows from Lemma~\ref{lem:exexcons} that $g$ is
  exact.
\end{proof}

\begin{lemma}\label{lem:exexcons}
  Given a commutative triangle of symmetric strong monoidal functors
\[
\xymatrix{S \ar[rr]^f\ar[rd]_u && T  ,\\ & S' \ar[ru]_g &}
\]
  if $f$ is exact, and $u$ is exact and bijective on objects, then $g$
  is exact.
\end{lemma}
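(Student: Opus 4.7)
The plan is to exploit the characterisation of exactness from Proposition~\ref{thm:exactness-convolution-characterisations}(\ref{thmcase:f*-strong}): a symmetric strong monoidal functor $h$ is exact precisely when the induced restriction functor $h^*$ on presheaves is strong monoidal, i.e.~when its canonical lax coherence datum $\overline h{}^*$ is invertible. The task is therefore to prove that $\overline g{}^*$ is invertible, given that $\overline f{}^*$ and $\overline u{}^*$ are.

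First I would use $f = g\circ u$ to decompose the restriction functor as $f^* = u^* \circ g^*$, and observe that the lax monoidal coherence on a composite of lax monoidal functors is the obvious composite of coherences. Explicitly, at any tuple $(X_1,\ldots,X_n)$ of presheaves on $T$,
\[
\overline f{}^*_{X_1,\ldots,X_n} \;=\; \overline u{}^*_{g^*X_1,\ldots,g^*X_n}\;\circ\;u^*\bigl(\overline g{}^*_{X_1,\ldots,X_n}\bigr).
\]
By hypothesis the left-hand side and the first factor on the right are invertible, so by two-out-of-three $u^*(\overline g{}^*)$ is invertible.

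Next, I would use that $u$ is bijective on objects to show that $u^*\colon \PSh{S'} \to \PSh S$ reflects isomorphisms of natural transformations: the components of $u^*\alpha$ are precisely the $\alpha_{uX}$ for $X\in S$, and since every object of $S'$ is of the form $uX$, invertibility of $u^*\alpha$ coincides with invertibility of $\alpha$. Applying this to $\alpha = \overline g{}^*$ concludes that $\overline g{}^*$ is invertible, so $g$ is exact.

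The only mildly subtle point is the factorisation identity for $\overline f{}^*$ above, i.e.~functoriality of the assignment $h\mapsto \overline h{}^*$ with respect to composition. This is the standard mate calculus applied to the adjunctions $h_! \dashv h^*$, in combination with Lemma~\ref{lem:alg-Kan} as used in the discussion preceding Proposition~\ref{thm:exactness-convolution-characterisations}; once this compatibility is in place, the argument is purely formal, the core idea being that $u$ bijective on objects lets us cancel $u^*$ from an invertibility statement about presheaf 2-cells on $S'$.
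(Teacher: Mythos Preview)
Your proof is correct and follows essentially the same route as the paper's: both reduce via Proposition~\ref{thm:exactness-convolution-characterisations} to showing $\overline g{}^*$ is invertible, factor $\overline f{}^*$ through $u^*(\overline g{}^*)$ and $\overline u{}^*$, and then use conservativity of $u^*$ to conclude. The only cosmetic difference is that the paper invokes monadicity of $u^*$ (for $u$ bijective on objects) to get conservativity, whereas you give the direct componentwise argument; your version is slightly more explicit but otherwise identical in substance.
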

\begin{proof}
  By Theorem~\ref{thm:exactness-convolution-characterisations} it
  is enough to check that $g^*$ is strong monoidal.  Consider the
  corresponding triangle of pullback functors:
\[
\xymatrix{\widehat S && \ar[ll]_{f^*} \widehat T \ar[ld]^{g^*} \\
& \widehat S' \ar[lu]^{u^*} &}
\]
  All three functors are lax monoidal; $f^*$ and $u^*$ are strong
  monoidal because of exactness.  Furthermore $u^*$ is monadic since
  $u$ is bijective on objects, and so $u^*$ is conservative.  The
  monoidal coherences of $f^*$ are invertible; but these are obtained
  by applying $u^*$ to the lax coherence of $g^*$.  Since $u^*$ is
  conservative we can therefore conclude that already the coherences
  for $g^*$ must be invertible.
\end{proof}

\section{The hereditary condition and exactness}
\label{sec:Hereditary}

In this section we analyse the hereditary condition of Kaufmann and
Ward \cite{Kaufmann-Ward:1312.1269} and relate it to Guitart exactness
in Proposition~\ref{prop:rPat=hered}.  In Section~\ref{sec:pins} we
shall see that the hereditary condition is one of two conditions
characterising substitudes among pinned monoidal categories
(Proposition~\ref{prop:substitudes-among-pinned-SMCs}).
\begin{blanko}{Permutation-monotone factorisation.}
  As in Section \ref{sec:strict} for $n \in \mathbb{N}$, we denote by
  $\underline n$ the linearly-ordered set $\{1,...,n\}$.  Any function
  $\alpha : \underline m \to \underline n$ factors uniquely as
\[
\alpha = \lambda_\alpha \circ \sigma_\alpha,
\]
  where $\sigma_\alpha: \underline m \isopil \underline m$ is a
  permutation that is monotone on the fibre $\alpha^{-1}(j)$ for each
  $j\in \underline n$, and $\lambda_\alpha:\underline m \to \underline
  n$ is monotone{\footnotemark{\footnotetext{This factorisation is not
  part of a factorisation system, but it is nevertheless very
  useful.}}}.  With reference to $\alpha: \underline m \to \underline
  n$, if $(x_1,\ldots,x_m) = (x_i)_{i\in \underline m}$ is a sequence
  of objects, we denote by $(x_i)_{\alpha i = j}$ the subsequence
  consisting of those entries whose index maps to $j$.  The order is
  the induced order on the subset $\alpha^{-1}(j) \subset \underline
  m$.
\end{blanko}
\begin{blanko}{The hereditary condition.}\label{defn:hereditary}\label{hered}
  A symmetric colax monoidal functor $\tau : \freesym C \to M$ satisfies the 
  \emph{hereditary condition} when for all pairs of sequences 
  $(x_i)_{i\in\underline{m}}$ and $(y_j)_{j\in\underline{n}}$ of objects of $C$, 
  the function
  \[ \begin{array}{c}
  {\hmap_{\tau,x,y} : 
  \sum\limits_{\alpha :\underline{m}\to\underline{n}} \prod\limits_{j\in\underline{n}} M(\tau(x_i)_{\alpha i=j},\tau y_j) \longrightarrow M(\tau(x_i)_{i{\in}\underline{m}},\bigotimes_{j\in\underline{n}}\tau y_j)}
  \end{array} \]
  which sends $(\alpha,(g_j)_{j\in\underline{n}})$ to the composite
  \[ \xygraph{!{0;(2.75,0):(0,1)::} 
  {\tau(x_i)_{i{\in}\underline{m}}}="p0" [r] {\tau(x_{\rhomark_\alpha^{-1}i})_{i{\in}\underline{m}}}="p1" [r] {\bigotimes_{j{\in}\underline{n}}\tau(x_i)_{\alpha i=j}}="p2" [r] {\bigotimes_{j\in\underline{n}}\tau y_j}="p3"
  "p0":"p1"^-{\tau\sigmarhof}:"p2"^-{\overline{\tau}}:"p3"^-{\bigotimes_jg_j}} \]
  in $M$, is a bijection.  (Note that 
  $\bigotimes_{j{\in}\underline{n}}\tau(x_{\rhomark_\alpha^{-1}i})_{\lambda_\alpha i=j}
  = 
  \bigotimes_{j{\in}\underline{n}}\tau(x_i)_{\alpha i=j}$.)
  Note that the summation is taken over {\em arbitrary}
  functions $\alpha: \underline{m} \to \underline{n}$, not just monotone ones.

  In the case where $\tau$ is strict (i.e.~when $\overline{\tau}$ is the identity)
  one has $\tau(x_i)_{i{\in}\underline{m}} = \bigotimes_{i{\in}\underline{m}}\tau x_i$,
  and it may be more convenient to write $\hmap_{\tau,x,y}$ as the function
  \[ \begin{array}{c}
  {\sum\limits_{\alpha :\underline{m}\to\underline{n}} \prod\limits_{j\in\underline{n}} M(\bigotimes_{\alpha i=j}\tau x_i,\tau y_j) \longrightarrow M(\bigotimes_{i\in\underline{m}}\tau x_i,\bigotimes_{j\in\underline{n}}\tau y_j)}
  \end{array} \]
  which sends $(\alpha,(g_j)_{j\in\underline{n}})$ to
  the composite
  $$
  \underset{i\in\underline{m}}{\Tensor} \tau x_i
  \stackrel{\rhomark}\longrightarrow
  \underset{i\in\underline{m}}{\Tensor} \tau x_{\rhomark^{-1} i}
  =
  \underset{j\in\underline{n}}{\Tensor} \ \underset{\alpha i=j}{\Tensor} \tau x_i
  \stackrel{\tensor_j g_j}\longrightarrow
  \underset{j\in\underline{n}}{\Tensor} \tau y_j .
  $$

  In less formal terms, the hereditary condition says that every morphism $f$ of
  $M$ as on the right
  \[ \begin{array}{lcccr}
  {g_j : \bigotimes_{\alpha i=j}\tau x_i 
  \longrightarrow \tau y_j}
   &&&& 
   {f : \bigotimes_{i\in\underline{m}}\tau x_i \longrightarrow 
  \bigotimes_{j\in\underline{n}}\tau y_j}
  \end{array} \]
  can be uniquely decomposed as a tensor product of morphisms $g_j$ as on the left,
  modulo some symmetry coherence isomorphisms in $M$.  A useful slogan for this
  is: `many-to-many maps decompose uniquely as a tensor product of many-to-one
  maps'; which expresses the operadic nature of this condition.
  The hereditary condition has been discovered independently by various people
  in different guises.  While Kaufmann and Ward got it from
  Markl~\cite{Markl:0601129} via Borisov and Manin~\cite{Borisov-Manin:0609748},
  it is also equivalent to the (operad case of the) `operadicity' condition of
  Melli\`es and Tabareau~\cite[\S 3.2]{MelliesTabareau-TAlgTheoriesKan}.
\end{blanko}
\noindent The main result of this section is
\begin{prop}\label{prop:rPat=hered}
Let $\tau : \freesym C \to M$ be a symmetric colax monoidal functor.
\begin{parenenumerate}
\item If $\tau$ is exact then it satisfies the hereditary condition.
\label{propcase:exactness->hereditariness}
\item If $\tau$ is essentially surjective, strong monoidal and satisfies the hereditary condition, then $\tau$ is exact.
\label{propcase:hereditariness->exactness}
\end{parenenumerate}
\end{prop}
\noindent and its proof occupies the rest of this section.  For (1), we shall
first show that the sum-over-functions formula arises from the Day convolution
product, and second that the hereditary maps are special cases of the 
components of a canonical
$2$-cell associated to $\tau$.  For (2), we shall invoke a classical criterion
for exactness in terms of a category of factorisations, going back to Guitart
himself~\cite{Guitart:1980} in some form, and analysed in more detail in
\cite{Weber-Guitart}.
\begin{blanko}{Sums over functions from convolution for free symmetric monoidal categories.}
Our discussion begins by identifying how sums over functions, as in the domains of the hereditary condition maps $\hmap_{\tau,x,y}$, arise categorically. For a small category $C$ we define the functor
\[ \Asterisk : \freesym(\PSh{\freesym C}) \longrightarrow \PSh{\freesym C} \]
to be given on objects as
\begin{equation}\label{eq:convolutionForFreesFormula}
\begin{array}{rcl}
{(\opAst\limits_{j{\in}\underline{n}} F_j)(x_i)_{i{\in}\underline{m}}} & = &
{\sum\limits_{\alpha:\underline{m}\to\underline{n}} \prod\limits_{j{\in}\underline{n}} F_j(x_i)_{\alpha i=j}}
\end{array}
\end{equation}
where the sum is taken over all functions $\underline{m} \to \underline{n}$. We 
shall now see, as the notation chosen indicates, that $\Asterisk$ is the Day 
convolution tensor product.

Let us first exhibit the functoriality of 
$\opAst_j F_j$ in $(x_i)_{i{\in}\underline{m}}$
(that is, verify that the assignment in \eqref{eq:convolutionForFreesFormula} 
really defines a presheaf on $\freesym{C}$).
Given $(F_j)_{j{\in}\underline{n}}$ in $\freesym(\PSh{\freesym C})$ and a 
morphism $(\sigma,(\arrowinC_i)_i) : (x_i)_{i{\in}\underline{m}} \to
(\secondobjectinC_i)_{i{\in}\underline{m}}$ in $\freesym C$, note that the permutation $\sigma
\in \Sigma_m$ restricts to $\sigma_j : \alpha^{-1}(j) \to
(\alpha\sigma^{-1})^{-1}(j)$ for $j \in \underline{n}$, and so we get
$(\sigma_j,(\arrowinC_i)_{\alpha{i}=j}) : (x_i)_{\alpha{i}=j} \to (\secondobjectinC_i)_{\alpha{i}=j}$
in $\freesym C$ for each $j\in \underline{n}$.  Thus we define
$\smash{\opAst\limits_{j{\in}\underline{n}}}F_j(\sigma,(\arrowinC_i)_i)$ as the unique
function such that the square
\[ \xygraph{!{0;(2.75,0):(0,.5)::}
{\smash{\prod\limits_{j{\in}\underline{n}}} F_j(x_i)_{\alpha{i}=j}}="p0" [r] {(\smash{\opAst\limits_{j{\in}\underline{n}}} F_j)(x_i)_{i{\in}\underline{m}}}="p1" [d] {(\smash{\opAst\limits_{j{\in}\underline{n}}} F_j)(\secondobjectinC_i)_{i{\in}\underline{m}}}="p2" [l] {\smash{\prod\limits_{j{\in}\underline{n}}} F_j(\secondobjectinC_i)_{\alpha\sigma^{-1}{i}=j}}="p3"
"p0":"p1"^-{k_{\alpha}}:"p2"^-{\smash{\opAst\limits_j}F_j(\sigma,(\arrowinC_i)_i)}:@{<-}"p3"^-{k_{\alpha\sigma^{-1}}}:@{<-}"p0"^-{\smash{\prod\limits_j}(\sigma_j,(\arrowinC_i)_{\alpha{i}=j})}
} \]
commutes, where $k_{\alpha}$ and $k_{\alpha\rho^{-1}}$ are the sum inclusions.
With the functoriality of this assignment clear by definition, we have thus
defined the object map of $\Asterisk : \freesym(\PSh{\freesym C}) \to
\PSh{\freesym C}$.  We proceed to check that $\Asterisk$ is functorial.
Let $(\rho,(\mapofpresheaves_j)_j) : (F_j)_{j{\in}\underline{n}} \to
(G_j)_{j{\in}\underline{n}}$ be a morphism in $\freesym(\PSh{\freesym C})$.  
For any function $\alpha : \underline{m} \to \underline{n}$,
$(x_i)_{i{\in}\underline{m}}$ in $\freesym C$, and $j \in \underline{n}$, one
has the function $(\mapofpresheaves_j)_{(x_i)_{\alpha{i}=j}} : F_j(x_i)_{\alpha{i}=j} \to
G_{\rho{j}}(x_i)_{\alpha{i}=j}$.  Thus the components of
$\Asterisk(\rho,(\mapofpresheaves_j)_j)$ are defined by the commutativity of the squares
\[
\xygraph{!{0;(2.75,0):(0,.5)::}
{\smash{\prod\limits_{j{\in}\underline{n}}} F_j(x_i)_{\alpha{i}=j}}="q0" [r] {(\smash{\opAst\limits_{j{\in}\underline{n}}} F_j)(x_i)_{i{\in}\underline{m}}}="q1" [d] {(\smash{\opAst\limits_{j{\in}\underline{n}}} G_j)(x_i)_{i{\in}\underline{m}}}="q2" [l] {\smash{\prod\limits_{j{\in}\underline{n}}} G_{\rho{j}}(x_i)_{\alpha{i}=j}}="q3"
"q0":"q1"^-{k_{\alpha}}:"q2"^-{\Asterisk(\rho,(\mapofpresheaves_j)_j)_{(x_i)_i}}:@{<-}"q3"^-{k_{\rho\alpha}}:@{<-}"q0"^-{\smash{\prod\limits_j}(\mapofpresheaves_j)_{(x_i)_i}}
}
\]
for all $\alpha$ and $(x_i)_i$.  With the
functoriality of this assignment also clear by definition, we have thus defined
the functor $\Asterisk : \freesym(\PSh{\freesym C}) \to \PSh{\freesym C}$.
\end{blanko}
\begin{lemma}\label{lem:sumOverFnsFromDay}
For any small category $C$, the functor $\Asterisk : \freesym(\PSh{\freesym C})
\to \PSh{\freesym C}$ just defined describes the tensor product for Day
convolution on $\freesym C$.
\end{lemma}
\begin{proof}
The formula (\ref{eq:convolutionForFreesFormula}) is clearly colimit preserving in each $F_j$, and so it suffices to exhibit an isomorphism
\[ \xygraph{!{0;(2,0):(0,.5)::}
{\freesym^2C}="p0" [r] {\freesym(\PSh{\freesym C})}="p1" [d] {\PSh{\freesym C}}="p2" [l] {\freesym C}="p3"
"p0":"p1"^-{\freesym y_{\freesym C}}:"p2"^-{\Asterisk}:@{<-}"p3"^-{y_{\freesym C}}:@{<-}"p0"^-{\mu_C}
"p0":@{}"p2"|-*{\iso}} \]
because then, this isomorphism will exhibit $\Asterisk$ as a pointwise left Kan extension of $y_{\freesym C}\mu_C$ along $\freesym y_{\freesym C}$, giving the result by Proposition \ref{prop:Day-convolution}. In terms of the notation of Section \ref{sec:exact}, this isomorphism will then be the natural isomorphism $\overline{y}_{\freesym C}$ corresponding to our formula (\ref{eq:convolutionForFreesFormula}) for $\Asterisk$.

An object of $\freesym^2C$ is a sequence of sequences from $C$, which for
convenience we identify as a pair $(\psi,(c_i)_{i{\in}\underline{m}})$, where
$\psi : \underline{m} \to \underline{n}$ is monotone, and
$(c_i)_i \in \freesym C$.  Applying $y_{\freesym C}\mu_C$ to this gives the
representable $\freesym C(-,(c_i)_{i{\in}\underline{m}})$.  On the other hand,
for $(x_i)_{i{\in}\underline{l}} \in \freesym C$, the set
\begin{equation}\label{eq:DayOnRepresentables}
(\Asterisk \circ \freesym(y_{\freesym C}))(\psi,(c_i)_{i{\in}\underline{m}})(x_i)_{i{\in}\underline{l}}
\end{equation}
is, as with $\freesym
C((x_i)_{i{\in}\underline{l}},(c_i)_{i{\in}\underline{m}})$, the empty set when
$l \neq m$.  However when $l=m$, the set (\ref{eq:DayOnRepresentables}) is the
sum
\begin{equation}\label{eq:DayOnRepsNonTrivialCase}
\begin{array}{c}
{\smash{\sum\limits_{\alpha:\underline{m}\to\underline{n}}} \, \smash{\prod\limits_{j{\in}\underline{n}}} \, \freesym C((x_i)_{\alpha i=j},(c_i)_{\psi i=j}).}
\end{array}
\end{equation}
\vskip .5em 
\noindent 
To give an element of (\ref{eq:DayOnRepsNonTrivialCase})
is to give a function $\alpha : \underline{m} \to \underline{n}$, a permutation
$\sigma \in \Sigma_m$ such that $\psi\sigma = \alpha$ (which just says that $\sigma$
restricts to bijections $\sigma_j : \alpha^{-1}(j) \to \psi^{-1}(j)$), and for $i
\in \underline{m}$ an arrow $\arrowinC_i : x_i \to c_{\sigma{i}}$ of $C$.  This is
the same as to give a morphism $(\sigma,(\arrowinC_i)_i) : (x_i)_{i{\in}\underline{m}}
\to (c_i)_{i{\in}\underline{m}}$ of $\freesym C$ such that $\psi\sigma=\alpha$,
and this last equation shows that $\alpha$ is redundant.  We thus have our desired
isomorphism, whose naturality is very easy to check.
\end{proof}
\begin{blanko}{Hereditary condition maps as the components of a natural transformation.}
  We now return to the situation of a general symmetric colax monoidal functor
  $\tau : \freesym C \to M$.  Denote by $\betacell{\tau}$ the coherence 2-cell
  datum
  \[ 
  \xygraph{!{0;(2,0):(0,.5)::} {\freesym M}="p0" [r] {M}="p1" [d] {\PSh{\freesym C}}="p2" [l] {\freesym(\PSh{\freesym C})}="p3"
"p0":"p1"^-{\bigotimes}:"p2"^-{M(\tau,1)}:@{<-}"p3"^-{\Asterisk}:@{<-}"p0"^-{\freesym(M(\tau,1))}
"p0" [d(.55)r(.4)] :@{=>}[r(.2)]^-{\betacell{\tau}}} 
\]
for the symmetric lax monoidal functor $M(\tau,1) : M \to \PSh{\freesym C}$.
By Theorem~\ref{thm:exactness-convolution-characterisations}, exactness of 
$\tau$ is equivalent to the invertibility of $\betacell{\tau}$.

Using the explicit description of the Day convolution tensor product in
$\freesym(\PSh{\freesym C})$, just established in
Lemma~\ref{lem:sumOverFnsFromDay}, we see that the components of $\betacell{\tau}$
at $(w_j)_{j{\in}\underline{n}}$ in $\freesym M$ amount to maps of sets
\[ \begin{array}{c}
{(\betacell{\tau}_{(w_j)_j})_{(x_i)_i} : 
\sum\limits_{\alpha:\underline{m}\to\underline{n}}\prod\limits_{j{\in}\underline{n}}
M(\tau(x_i)_{\alpha{i}=j},w_j) 
\longrightarrow 
M(\tau(x_i)_i,\bigotimes\limits_j w_j)}
\end{array} \]
for $(x_i)_{i{\in}\underline{m}}$ in $\freesym C$, which  we proceed to describe.

\end{blanko}
\begin{lemma}\label{lem:descriptionOfBetaTau}
  The component $(\betacell{\tau}_{(w_j)_j})_{(x_i)_i}$ is the 
  function which sends 
  $(\alpha,(\gammatog_j)_{j{\in}\underline{n}})$
  to the composite
  \begin{equation}\label{eq:directDescBetaTau}
  \xygraph{
  *=(0,1.5)!(0,.3){\xybox{{\xygraph{!{0;(2.5,0):(0,.4)::}
  {\tau(x_i)_{i{\in}\underline{m}}}="p0" [r] {\tau(a_{\sigma_{\alpha}^{-1}i})_{i{\in}\underline{m}}}="p1" [r(1.2)] {\bigotimes_{j{\in}\underline{n}}\tau(x_i)_{\alpha{i}=j}}="p2" [r(1.25)] {\bigotimes_{j{\in}\underline{n}}\tau(w_j)}="p3"
  "p0":"p1"^-{\tau\sigma_{\alpha}}:"p2"^-{\overline{\tau}}:"p3"^-{\bigotimes_j\gammatog_j}}}}}}
  \end{equation}
\end{lemma}
  \vskip -2.25em \noindent
(Note that in the special case where $w_j = \tau y_j$ (that is, the components 
of $\betacell{\tau}$ in the image of $\tau$), we recover precisely the 
hereditary maps $\hmap_{\tau,x,y}$.)

\begin{proof}
  We saw in Section \ref{sec:exact} that $M(\tau,1)$ is the
pointwise left Kan extension of $y_{\freesym C}$ along $\tau$. It then follows 
from
Theorem 2.4.4(1) of \cite{Weber-Guitart} that $\betacell{\tau}$ can be 
characterised as the unique natural transformation satisfying the equation
\begin{equation}\label{eq:defBetaTau}
\xygraph{!{0;(1,0):(0,1.25)::} 
{\freesym^2C}="p0" [r(2)] {\freesym M}="p1" [dr] {M}="p2" [l(2)d] {\PSh{\freesym C}}="p3" [l(2)u] {\freesym C}="p4" [r(2)] {\freesym(\PSh{\freesym C})}="p5"
"p0":"p1"^-{\freesym\tau}:"p2"^-{\bigotimes}:"p3"^-{M(\tau,1)}:@{<-}"p4"^-{y_{\freesym C}}:@{<-}"p0"^-{\mu_C}
"p5" (:@{<-}"p0"|-{}="h0" "h0" [d(.15)l(.15)] {\scriptstyle{\freesym y_{\freesym C}}}, :@{<-}"p1"|-{}="h1" "h1" [d(.15)r(.4)] {\scriptstyle{\freesym M(\tau,1)}}, :"p3"^(.45){\Asterisk})
"p4" [r(.85)d(.1)] :@{=>}[r(.3)]^-{\overline{y}_{\freesym C}}
"p5" [r(.85)d(.1)] :@{=>}[r(.3)]^-{\betacell{\tau}}
"p5" [u(.6)l(.15)] :@{=>}[r(.3)]^-{\freesym\chi^{\tau}}
"p2" [r(1.25)] {=} [r(1.25)u]
{\freesym^2C}="q0" [r(2)] {\freesym M}="q1" [d] {M}="q2" [ld] {\PSh{\freesym C}}="q3" [lu] {\freesym C}="q4"
"q0":"q1"^-{\freesym\tau}:"q2"^-{\bigotimes}:"q3"^-{M(\tau,1)}:@{<-}"q4"^-{y_{\freesym C}} (:@{<-}"q0"^-{\mu_C}, :"q2"^-{\tau})
"q0" [d(.5)r(.85)] :@{=>}[r(.3)]^-{\overline{\tau}}
"q4" [d(.4)r(.85)] :@{=>}[r(.3)]^-{\chi^{\tau}}}
\end{equation}
  To prove the lemma it is therefore enough to check that
  the stated formula for $\betacell{\tau}$ satisfies Equation~\eqref{eq:defBetaTau}.
  As in the proof of Lemma \ref{lem:sumOverFnsFromDay} we denote an object of
  $\freesym^2C$ as a pair $(\psi,(c_i)_i)$, where 
  $\psi : \underline{m} \to \underline{n}$ is monotone and
  $(c_i)_{i{\in}\underline{m}}$
  is an object of $\freesym C$.  We check that the
  $(\psi,(c_i)_i)$-components of either side of (\ref{eq:defBetaTau}) agree.
Note that for $(x_i)_{i{\in}\underline{l}}$ in 
$\freesym C$, the $((x_i)_{i{\in}\underline{l}}, (\psi,(c_i)_i))$-components of
both sides of (\ref{eq:defBetaTau}) give functions
\[ \begin{array}{c}
{\freesym C((x_i)_{i{\in}\underline{l}},(c_i)_{i{\in}\underline{m}}) \longrightarrow M(\tau(x_i)_{i{\in}\underline{l}},\bigotimes_j\tau(c_i)_{\psi{i}=j}).}
\end{array} \]
The domain of these functions is empty when $l \neq m$, and so it suffices to
consider the case when $l = m$.  Let $(\rho,(\arrowinC_i)_{i{\in}\underline{m}})$
be in $\freesym C((x_i)_{i{\in}\underline{m}},(c_i)_{i{\in}\underline{m}})$.
Note that for $j \in \underline{n}$, the permutation $\rho$ restricts to
bijections $\rho_j : (\psi\rho)^{-1}(j) \to (\psi)^{-1}(j)$, and so one has
$(\rho_j,(\arrowinC_i)_{\psi{i}=j})$ in $\freesym
C((x_i)_{\psi\rho{i}=j},(c_i)_{\psi{i}=j})$.  Applying the $((x_i)_i,
(\psi,(c_i)_i))$-component of the left hand side of (\ref{eq:defBetaTau}) to
$(\rho,(\arrowinC_i)_{i{\in}\underline{m}})$ gives the composite
(\ref{eq:directDescBetaTau}) with $\gammatog_j =
\tau(\rho_j,(\arrowinC_i)_{\psi{i}=j})$.  On the other hand, applying the
$((x_i)_i, (\psi,(c_i)_i))$-component of the right hand side to
$(\rho,(\arrowinC_i)_{i{\in}\underline{m}})$ gives the composite
$\overline{\tau}_{(\psi,(c_i)_i)} \circ \tau(\rho,(\arrowinC_i)_i)$.  These
coincide by the naturality of $\overline{\tau}$.
\end{proof}
\begin{proof}[of Proposition~\ref{prop:rPat=hered}~(\ref{propcase:exactness->hereditariness})]
  In view of Lemma \ref{lem:descriptionOfBetaTau}, the hereditary condition for
  $\tau$ says that the natural transformation $\betacell{\tau}S(\tau\eta_C)$ is
  invertible.  The result follows since by 
  Theorem~\ref{thm:exactness-convolution-characterisations}, exactness is equivalent to
  the invertibility of $\betacell{\tau}$.
\end{proof}
The argument just given is not immediately adaptable to prove the converse.
Even if $\tau$ is essentially surjective, the functor $S(\tau\eta_C) : \freesym
C \to \freesym M$ is not, and so even for essentially surjective $\tau$, it is
not apparent that the invertibility of $\betacell{\tau}S(\tau\eta_C)$ implies the
invertibility of $\betacell{\tau}$.  Hence the need for the further assumption on
$\tau$ of strong monoidality among the hypotheses of Proposition
\ref{prop:rPat=hered}(\ref{propcase:hereditariness->exactness}).  Moreover,
it seems to us that the clearest proof of this result follows from the
consideration of factorisation categories, to which we now turn.
\begin{blanko}{Factorisation category of a morphism $f: Fv \to \tensor_k w_k$.}
Consider a symmetric colax monoidal functor $F: V \to W$ (with coherences $\overline F: (\tensor_i v_i) \to \tensor_i F v_i$). For each morphism in $W$ of the form $f: Fv \to \tensor_k w_k$, we define the category of factorisations $\operatorname{Fact}(f)$ as having objects diagrams of the form
\[ \xymatrix{
Fv \ar[rrr]^f\ar[rd]_{Fh} &&& \tensor_k w_k  .  \\ 
& F( \tensor_k u_k) \ar[r]  & \tensor_k F(u_k) \ar[ru]_{\tensor_k g_k} &} 
\]
Morphisms in $\operatorname{Fact}(f)$ are connecting maps (in $V$) between the $u_k$ and the $u'_k$ making a left-hand triangle commute already in $V$, and making the right hand triangle commute in $W$. (The middle square commutes by naturality of the coherence data.)
\end{blanko}
By Proposition 4.5.5(1) of \cite{Weber-Guitart}, $F$ is exact as a
symmetric colax monoidal functor ($=$~colax morphism of
$\freesym$-algebras) if and only if it is exact as a colax monoidal functor
($=$~colax morphism of $\freemon$-algebras).  Applying Lemma
\ref{lem:combinatorial-exactness-Cat-fact-version} to the lax square
containing $F$'s colax $\freemon$-morphism datum, gives the following
explicit characterisation of exact symmetric colax monoidal functors.
\begin{lemma}\label{lem:Fact}
A symmetric colax monoidal functor $F: V \to W$ as above is exact
if and only if for every $f:v \to \tensor_k w_k$ the category 
$\operatorname{Fact}(f)$ is connected.  \qed
\end{lemma}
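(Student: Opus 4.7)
The plan is to apply Lemma \ref{lem:combinatorial-exactness-Cat-fact-version} directly to the lax square whose exactness, by definition, is the exactness of $F$ as a symmetric colax monoidal functor:
\[
\xymatrix{
\freesym V \ar[r]^{\freesym F} \ar[d]_{\bigotimes} \ar@{}[dr]|{\overset{\overline F}{\Rightarrow}} & \freesym W \ar[d]^{\bigotimes} \\
V \ar[r]_F & W.
}
\]
Setting $A = V$, $B = \freesym W$, $P = \freesym V$, $C = W$, $p = \bigotimes$, $q = \freesym F$, $f = F$, $g = \bigotimes$ and $\phi = \overline F$ in Lemma \ref{lem:combinatorial-exactness-Cat-fact-version}, the resulting criterion is: $F$ is exact if and only if for every $v \in V$, every sequence $(w_k) \in \freesym W$, and every morphism $\gamma : Fv \to \bigotimes_k w_k$ in $W$, the category $\tn{Fact}_{\overline F}(v, \gamma, (w_k))$ is connected.

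Next I would unpack this general factorisation category and identify it with $\operatorname{Fact}(\gamma)$ as defined in the statement. An object of $\tn{Fact}_{\overline F}(v, \gamma, (w_k))$ is a triple $(\alpha, x, \beta)$ with $x = (u_\ell)_\ell \in \freesym V$, $\alpha : v \to \bigotimes_\ell u_\ell$ in $V$, and $\beta : \freesym F(x) \to (w_k)$ in $\freesym W$, subject to $\bigotimes(\beta) \circ \overline F_x \circ F(\alpha) = \gamma$; a morphism is an arrow $\delta$ in $\freesym V$ intertwining $\alpha$ and $\beta$. The only discrepancy with the definition of $\operatorname{Fact}(\gamma)$ is that a morphism in $\freesym W$ (or $\freesym V$) carries a permutation, whereas the definition of $\operatorname{Fact}(\gamma)$ is phrased with untwisted component maps. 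One resolves this by absorbing any permutation $\rho$ appearing in $\beta$: reindex $(u_\ell)$ by $\rho^{-1}$ and precompose $\alpha$ with the corresponding symmetry isomorphism of $\bigotimes$ in $V$. This realises an isomorphism inside $\tn{Fact}_{\overline F}$ between the original object and one with trivial permutation in $\beta$, and the full subcategory on the objects with trivial permutation in $\beta$ is precisely $\operatorname{Fact}(\gamma)$: the restriction identifies $\alpha$ with $h$, and the two triangles governing morphisms in $\operatorname{Fact}(\gamma)$ are exactly the conditions $p(\delta)\alpha_1 = \alpha_2$ and $\beta_1 = \beta_2 q(\delta)$ when $\delta$ has trivial permutation. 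Thus the inclusion $\operatorname{Fact}(\gamma) \hookrightarrow \tn{Fact}_{\overline F}(v, \gamma, (w_k))$ is an equivalence of categories.

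Since connectedness of a category is an equivalence-invariant property, the criterion from Lemma \ref{lem:combinatorial-exactness-Cat-fact-version} then becomes the statement of the present lemma. The only step requiring a little care, though no real difficulty, is the bookkeeping involved in absorbing the permutations carried by morphisms of $\freesym V$ and $\freesym W$ into the symmetries of the monoidal structure on $V$; once this is done, the lemma is genuinely immediate from Lemma \ref{lem:combinatorial-exactness-Cat-fact-version} as claimed.
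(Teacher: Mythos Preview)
Your proof is correct and follows the same approach as the paper, which treats the lemma as an immediate consequence of Lemma~\ref{lem:combinatorial-exactness-Cat-fact-version} without further comment. The additional detail you supply---absorbing the permutation carried by $\beta$ into a symmetry isomorphism so as to identify $\operatorname{Fact}(\gamma)$ with (an equivalent full subcategory of) the general $\tn{Fact}_{\overline F}(v,\gamma,(w_k))$---is exactly the bookkeeping the paper leaves implicit in its informal description of $\operatorname{Fact}(f)$. One small wording slip: you say ``precompose $\alpha$'' where you mean postcompose (the symmetry is applied to the target $\bigotimes_\ell u_\ell$ of $\alpha$), but the construction itself is right.
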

When $V$ is $\freesym C$, and $F$ is strict monoidal $\tau: \freesym C \to M$,
the description of $\operatorname{Fact}(f)$ simplifies considerably.
In this case $v= (x_i)_{i\in \underline m}$, and 
each $u_k$ is a sequence 
$(z_i)_{i\in \underline m_k}$.  Furthermore, since $h$ is a map in $\freesym C$
it is a labelled permutation, so in particular the concatenated sequence
$((z_i)_{i\in \underline m_k})_{k\in \underline r}$ is of the same length as 
$\underline m$, so altogether we can write a factorisation of $f: \tau(x_i)_{i\in 
\underline m} \to \tensor_{k\in \underline r} w_k$ as
$$\xymatrix{
\tau(x_i)_{i\in \underline m} \ar[r]^-{\tau(h)} & 
\tau( (z_i)_{\beta i=k} )_{k \in \underline r} \ = \ 
\tensor_{k \in \underline r} \tensor_{\beta i=k}
\tau z_i
\ar[r]^-{\tensor_{k\in \underline r} g_k} &
\tensor_{k\in \underline r} w_k  ,
}$$
where $\beta: \underline m \to \underline r$ is monotone.
Since $h$ is a labelled permutation, we can keep the pure permutation 
part as a first factor, and then absorb the second factor (the `labels')
into the $g_k$ on the right.  Renaming those $g_k$ accordingly, we arrive at a factorisation
$$\xymatrix{
\tau(x_i)_{i\in \underline m} \ar[r]^-{\sim} & 
\tau( (x_i)_{\beta i=k} )_{k \in \underline r} \ = \ 
\tensor_{k \in \underline r} \tensor_{\beta i=k}
\tau x_i
\ar[r]^-{\tensor_{k\in \underline r} g_k} &
\tensor_{k\in \underline r} w_k  .
}$$
Factorisations of this form we call {\em normalised}.  Clearly every factorisation receives a morphism from its normalisation.
\begin{proof}[of Proposition~\ref{prop:rPat=hered}~(\ref{propcase:hereditariness->exactness})]
  By strictification, we can assume that $\tau$ is strict monoidal and identity on objects. We fix 
  $f: \tau(x_i)_{i \in \underline m} = \tensor_{i \in \underline m} \tau x_i
  \to \tensor_{k\in \underline r} w_k$
  and aim to show that $\operatorname{Fact}(f)$ is connected.
  The identity-on-object condition means that each
  $w_k$ is a tensor product of certain $\tau y_j$, say $w_k = 
  \tensor_{\oldlambda j = k} \tau y_j$, where $\oldlambda: \underline n \to 
  \underline r$ is monotone.  Altogether,
  $$
  \tensor_{k \in \underline r} w_k = \tensor_{k \in \underline r} 
  \tensor_{\oldlambda j = k} \tau y_j = \tensor_{j \in \underline n} \tau y_j.
  $$
  The map 
  $f: \tensor_{i \in \underline m} \tau x_i \to \tensor_{k\in \underline r} w_k$
  is now of the form
  $$
  f: \tensor_{i \in \underline m} \tau x_i \to \tensor_{j\in \underline n} \tau y_j ,
  $$
  and by the hereditary condition we get a factorisation as
  $$\xymatrix{
\tensor_{i \in \underline m} \tau x_i \ar[r]^-{\sim} & 
\tensor_{j \in \underline n} \tensor_{\alpha i=j}
\tau x_i
\ar[r]^-{\tensor_{j\in \underline n} s_j} &
\tensor_{j\in \underline n} \tau y_j ,
}$$
  which can also be written as
  \begin{equation}\label{eq:standardfact}
    \xymatrix{
    \tensor_{i \in \underline m} \tau x_i \ar[r]^-{\sim} & 
    \tensor_{k\in \underline r}\tensor_{\oldlambda j=k} \tensor_{\alpha i=j}
    \tau x_i
    \ar[rr]^-{\tensor_{k \in \underline r} (\tensor_{\oldlambda j =k} s_j)} &&
    \tensor_{k \in \underline r}(\tensor_{\oldlambda j =k} \tau y_j) .
    }
    \end{equation}
  This we refer to as the standard factorisation of $f$.  It is seen to
  be an object in $\operatorname{Fact}(f)$ by putting
  $g_k  = \tensor_{\oldlambda j =k} s_j$.
  In particular we have now shown that $\operatorname{Fact}(f)$ is not empty.
  
  Given now any other normalised object in $\operatorname{Fact}(f)$
  \begin{equation}\label{eq:givenfact}
    \xymatrix{
    \tensor_{i \in \underline m} \tau x_i \ar[r]^-{\sim} & 
    \tensor_{k \in \underline r} \tensor_{\beta i=k}
    \tau x_i
    \ar[r]^-{\tensor_{k\in \underline r} g'_k} &
    \tensor_{k\in \underline r} w_k ,
    }
  \end{equation}
  where $\beta: \underline m \to \underline r$ monotone,
  we would like to connect it to the standard factorisation just constructed.
  To this end we apply the hereditary condition to each of the maps 
  $$
  g'_k : \tensor_{\beta i=k} \tau x_i
  \to
  w_k = \tensor_{\oldlambda j = k} \tau y_j .
  $$
  This gives us a function $\alpha_k : \beta^{-1}(k) \to \oldlambda^{-1}(k)$,
  and all these functions assemble into a function $\alpha: \underline m \to \underline n$
  such that $\oldlambda \circ \alpha = \beta$.
  With reference to these $\alpha_k$, the hereditary condition gives us
  a normalised factorisation of $g_k$ as
  $$
  \xymatrix{
  \tensor_{\beta i=k} \tau x_i
  \ar[r]^-\sim &
  \tensor_{\oldlambda j = k} \tensor_{\alpha i = j} \tau x_i 
  \ar[rr]^-{\tensor_{\oldlambda j = k} s'_j} &&
  \tensor_{\oldlambda j = k} \tau y_j
  }
  $$
  so that altogether $f$ factors as
    $$\xymatrix{
  \tensor_{i \in \underline m} \tau x_i \ar[r]^-{\sim} & 
  \tensor_{k \in \underline r} \tensor_{\beta i=k} \tau x_i
  \ar[r]^-\sim &
  \tensor_{k\in \underline r} \tensor_{\oldlambda j = k} \tensor_{\alpha i = j} \tau 
  x_i
  \ar[rr]^-{\tensor_{k\in \underline r} \tensor_{\oldlambda j = k} s'_k} &&
  \tensor_{k\in \underline r} \tensor_{\oldlambda j = k} \tau y_j .
  }$$
  If we take the middle permutation to belong to the left-hand factor,
  then we obtain a factorisation of the standard shape \eqref{eq:standardfact},
  and by the uniqueness property
  in the hereditary condition, this must actually be {\em equal} to 
  the standard factorisation \eqref{eq:standardfact}, that is $s'_j = s_j$ for all $j\in \underline n$.
  On the other hand, if we let the middle permutation belong to the right-hand
  factor, we get precisely the given normalised factorisation \eqref{eq:givenfact}.
  Hence the given normalised factorisation is connected to the standard 
  factorisation.
  Since we already remarked that any factorisation is connected to its 
  normalisation, we have altogether shown that $\operatorname{Fact}(f)$ is connected.
\end{proof}
An alternative proof can be derived
from the results in Section~\ref{sec:pins}:
by 
Proposition \ref{prop:substitudes-among-pinned-SMCs}, we may assume that
$\tau$ is of the form $\LHerm(\phi)$, where $\phi : C \to P$ is a substitude.  
It can be checked directly that for any morphism of operads $\phi$, the
symmetric monoidal functor $\LHerm(\phi)$ is exact.  See \cite{Weber-Guitart}
(Corollary 3.4.1) for a proof.

\section{Kaufmann--Ward comma-category condition}

\label{sec:comma}

\begin{blanko}{Feynman categories.}
Kaufmann and Ward~\cite{Kaufmann-Ward:1312.1269} define a {\em Feynman category} to
be a symmetric strong monoidal functor $\tau: \freesym C \to M$ satisfying the
three conditions
  \begin{parenenumerate}
    \item $C$ is a groupoid
    
    \item $\tau$ induces an equivalence of groupoids $\freesym C \isopil 
    M_{\operatorname{iso}}$

    \item $\tau$ induces an equivalence of groupoids
    $\freesym(M\comma C)_{\operatorname{iso}} 
    \isopil (M\comma M)_{{\operatorname{iso}}}$.
  \end{parenenumerate}
In this section we show that the comma-category condition (3), can be
reformulated in terms of the hereditary map
\[ \begin{array}{c}
  {\hmap := \hmap_{\tau,x,y} : 
  \sum\limits_{\alpha :\underline{m}\to\underline{n}} \prod\limits_{j\in\underline{n}} M(\bigotimes_{\alpha i=j}\tau x_i,\tau y_j) \longrightarrow M(\bigotimes_{i\in\underline{m}}\tau x_i,\bigotimes_{j\in\underline{n}}\tau y_j)}
  \end{array}
\]
from \ref{defn:hereditary}.  This is more or less
implicit in \cite{Kaufmann-Ward:1312.1269}.  This section does not seem to
generalise to the enriched setting.
\end{blanko}

\begin{prop}\label{FC-comma}
For an essentially surjective symmetric strong monoidal functor
$\tau:\freesym C \to M$, the following are equivalent:
  
  \begin{parenenumerate}
    \item $\tau:\freesym C\to M$ is hereditary and
    $\tau_{\operatorname{iso}}: \freesym C_{\operatorname{iso}} \to M_{\operatorname{iso}}$ 
    is an equivalence of groupoids
  
    \item The natural map $\oldw : \freesym(M\comma C)_{\operatorname{iso}} 
    \to (M \comma M)_{\operatorname{iso}}$ is 
    an equivalence of groupoids.

  \end{parenenumerate}
\end{prop}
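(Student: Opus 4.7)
The plan is to prove the two implications by splitting the equivalence of groupoids into essential surjectivity and full faithfulness on isomorphisms, and then matching each half to a combination of the hereditary condition with the equivalence $\tau_{\operatorname{iso}}: \freesym C_{\operatorname{iso}} \isopil M_{\operatorname{iso}}$.

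For $(1) \Rightarrow (2)$, essential surjectivity of $w$ goes as follows. Given an object $f: Z \to W$ of $(M\comma M)_{\operatorname{iso}}$, use essential surjectivity of $\tau$ to choose isomorphisms $Z \cong \bigotimes_i \tau x_i$ and $W \cong \bigotimes_j \tau y_j$ in $M$; these assemble into an iso of $f$ with some $f': \bigotimes_i \tau x_i \to \bigotimes_j \tau y_j$ in $M\comma M$. The hereditary condition (\ref{defn:hereditary}) applied to $f'$ produces data $(\alpha, (g_j)_j)$ with $f' = (\bigotimes_j g_j) \circ \tau(\sigma_\alpha)$; since $\tau(\sigma_\alpha)$ is invertible, $f'$ is further isomorphic in $M\comma M$ to $\bigotimes_j g_j = w((g_j)_j)$. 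For full faithfulness on isos, we may use essential surjectivity of $\tau$ and invariance under isomorphism to assume both objects of $\freesym(M\comma C)_{\operatorname{iso}}$ have the normal form $(g_j: \bigotimes_i \tau x_i^j \to \tau y_j)_{j\in\underline n}$ and $(g_j': \bigotimes_i \tau x_i'^j \to \tau y_j')_{j\in\underline{n'}}$. An iso $(\alpha,\beta)$ of the $w$-images in $M\comma M$ has target component $\beta: \bigotimes_j \tau y_j \to \bigotimes_j \tau y_j'$, which by $\tau_{\operatorname{iso}}$ full faithfulness descends uniquely to an iso $(\rho,(\beta_j)_j)$ in $\freesym C_{\operatorname{iso}}$, forcing $n=n'$. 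The commutative square combined with $\tau_{\operatorname{iso}}$ full faithfulness applied to the concatenated source sequence then forces the iso of sources to respect the block partition dictated by $\rho$; uniqueness of the resulting decomposition into component isos in $M\comma C$ follows from the hereditary condition's injectivity statement.

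For $(2) \Rightarrow (1)$, essential surjectivity of $\tau_{\operatorname{iso}}$ is part of the standing hypothesis on $\tau$. Full faithfulness of $\tau_{\operatorname{iso}}$ is obtained by feeding identity objects into $w$: a putative iso $\bigotimes_i \tau x_i \to \bigotimes_i \tau x_i'$ in $M$ induces an iso in $M\comma M$ between $\id_{\bigotimes \tau x_i} = w((\id_{\tau x_i})_i)$ and $\id_{\bigotimes \tau x_i'} = w((\id_{\tau x_i'})_i)$; by (2) this lifts uniquely to an iso in $\freesym(M\comma C)_{\operatorname{iso}}$, whose $C$-component is the desired preimage in $\freesym C_{\operatorname{iso}}$, and uniqueness handles faithfulness. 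For the hereditary condition, essential surjectivity of $w$ applied to an arbitrary $f: \bigotimes \tau x_i \to \bigotimes \tau y_j$ in $M\comma M$ provides a factorisation of some representative of its iso class as $\bigotimes_j g_j$, which after conjugation by the connecting isos yields data $(\alpha,(g_j)_j)$ realising $f$ via the map $h_{\tau,x,y}$; full faithfulness of $w$ then gives uniqueness of this data modulo the permutation ambiguity absorbed in $\sigma_\alpha$.

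The main obstacle will be the full faithfulness direction in $(1) \Rightarrow (2)$: one has to track carefully how the commutative square in $M\comma M$ forces the source isomorphism $\alpha$ to align with the block structure dictated by the target permutation $\rho$, and to check that the component isomorphisms it produces in each $M\comma C$ are uniquely determined. The argument is most transparent after reducing to normal-form objects, after which everything becomes a direct bookkeeping exercise combining the hereditary bijection with the $\tau_{\operatorname{iso}}$ equivalence; the remaining steps are essentially translations between the two descriptions.
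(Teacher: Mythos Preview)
Your proposal is correct and follows essentially the same strategy as the paper: split the equivalence of groupoids $w$ into essential surjectivity and full faithfulness, and match these against surjectivity and injectivity of the hereditary map $h_{\tau,x,y}$, using the equivalence $\tau_{\operatorname{iso}}$ as the bridge throughout.

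The paper organises the argument slightly differently, via three lemmas. First it observes (Lemma~\ref{lem:ff}) that fullness, respectively faithfulness, of $w$ already forces fullness, respectively faithfulness, of $\tau_{\operatorname{iso}}$, by restricting to the identity maps; this lets one treat ``$\tau_{\operatorname{iso}}$ is an equivalence'' as a standing hypothesis in both directions. Under that hypothesis, Lemma~\ref{lem:surj} shows that $w$ is essentially surjective iff the hereditary map is surjective, and Lemma~\ref{lem:inj} shows that $w$ is full iff the hereditary map is injective (faithfulness of $w$ then being automatic from Lemma~\ref{lem:ff}). This packaging avoids redoing the $\tau_{\operatorname{iso}}$ analysis in each direction, but the actual content is the same as yours.

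One point to sharpen in your $(1)\Rightarrow(2)$ fullness argument: you write that ``$\tau_{\operatorname{iso}}$ full faithfulness applied to the concatenated source sequence then forces the iso of sources to respect the block partition dictated by $\rho$''. The $\tau_{\operatorname{iso}}$ equivalence only tells you the source iso $a$ is a labelled permutation; the block alignment $\lambda'\circ\varsigma = \rho\circ\lambda$ comes from comparing the hereditary decompositions of the two equal composites $(\bigotimes_j g'_j)\circ a = b\circ(\bigotimes_j g_j)$ and invoking \emph{injectivity} of $h_{\tau,x,y}$. You do invoke hereditary injectivity in the next clause, and you flag this as the delicate step, so the ingredients are all present---just be sure to place them in the right order when you write it out.
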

In view of Proposition~\ref{prop:rPat=hered}, this shows:
\begin{cor}\label{cor:FC}
  A Feynman category is precisely a regular
  pattern $\tau:\freesym C \to M$ for which $C$ is a groupoid and
  $\tau_{\operatorname{iso}}: \freesym C_{\operatorname{iso}} \to
  M_{\operatorname{iso}}$ is an equivalence of groupoids.
  \qed
\end{cor}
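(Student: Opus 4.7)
The plan is to exploit strictification together with two uniqueness principles: the uniqueness of labelled permutations in $\freesym C$ representing an isomorphism in $M$ (from $\tau_{\operatorname{iso}}$ being an equivalence) and the uniqueness of hereditary factorisations. By Proposition~\ref{prop:strict}, both (1) and (2) are invariant under monoidal equivalence, so we may replace $\tau$ by its Gabriel factorisation and assume $\tau : \freesym C \to M$ is strict monoidal and identity-on-objects. Under this identification, objects of $M$ are finite sequences in $C$, the tensor is concatenation, and the functor $w$ sends a sequence $((\underline{x}_j, y_j, g_j))_{j \in \underline n}$ to $\bigotimes_j g_j : \bigotimes_j \underline{x}_j \to (y_1, \dots, y_n)$.

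For (1) $\Rightarrow$ (2), essential surjectivity of $w$ on $(M \comma M)_{\operatorname{iso}}$ is nearly immediate: given $G : \underline x \to (y_1, \dots, y_n)$, the hereditary condition writes $G = (\bigotimes_j g_j) \circ \tau(\sigma)$ with $g_j$ landing in the individual $\tau y_j$, and the iso $\tau(\sigma)$ supplies the required iso in $M \comma M$ between $G$ and $w\bigl((\bigotimes_{\alpha i = j} \tau x_i,\, y_j,\, g_j)_j\bigr)$. For full faithfulness on isomorphisms, given $(u,u') : \bigotimes g_j \stackrel{\sim}{\to} \bigotimes g'_{j'}$, the iso $u'$ is a labelled permutation because $\tau_{\operatorname{iso}}$ is an equivalence, producing $\sigma$ (hence $n = n'$) and isos $\beta_j : y_j \stackrel{\sim}{\to} y'_{\sigma j}$. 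The compatibility $u' \circ \bigotimes g_j = \bigotimes g'_{j'} \circ u$, read through the hereditary factorisations of both sides and their uniqueness clause, then forces $u$ to split as a tensor of isos $\phi_j : \underline{x}_j \stackrel{\sim}{\to} \underline{x}'_{\sigma j}$ intertwining $g_j$ with $g'_{\sigma j}$ via $\tau\beta_j$; this is exactly the data of an isomorphism in $\freesym(M \comma C)_{\operatorname{iso}}$, and the uniqueness of the two decompositions also yields faithfulness.

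For (2) $\Rightarrow$ (1), restrict $w$ to identity morphisms: the sequence $(\tau x_i, x_i, \operatorname{id})_i$ maps to $\operatorname{id}_{(x_i)_i}$, and any iso $u : \tau \underline x \stackrel{\sim}{\to} \tau \underline y$ yields $(u,u) : \operatorname{id} \cong \operatorname{id}$ in $(M \comma M)_{\operatorname{iso}}$, whose unique preimage under $w$ is a labelled permutation in $\freesym C_{\operatorname{iso}}$; this delivers full faithfulness of $\tau_{\operatorname{iso}}$, while essential surjectivity on objects is automatic from identity-on-objects. For the hereditary condition, view $f : \bigotimes \tau x_i \to \bigotimes \tau y_j$ as an object of $(M \comma M)_{\operatorname{iso}}$ and apply essential surjectivity of $w$ to obtain an iso $(u,u') : w((m_k, z_k, g_k)_k) \stackrel{\sim}{\to} f$; decomposing the isomorphisms $u$ and $u'$ as labelled permutations using the now-established $\tau_{\operatorname{iso}}$ and absorbing them into the $g_k$ produces a hereditary factorisation of $f$, with uniqueness guaranteed by faithfulness of $w$ on isomorphisms. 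The main obstacle is the bookkeeping step in (1) $\Rightarrow$ (2), where one must show that the compatibility square of $(u,u')$ with the block structure imposed by $\bigotimes g_j$ and $\bigotimes g'_{j'}$ forces $u$ itself to respect these blocks: this amounts to a careful double application of the uniqueness clause in the hereditary condition to the two sides of the commutative square.
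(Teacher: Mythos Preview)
Your overall approach matches the paper's: the corollary is immediate from Proposition~\ref{FC-comma} together with Proposition~\ref{prop:rPat=hered}, and what you are really sketching is a proof of Proposition~\ref{FC-comma}. The paper organises that proof into three lemmas (\ref{lem:ff}, \ref{lem:surj}, \ref{lem:inj}), separating out the passage between $w$ and $\tau_{\operatorname{iso}}$, surjectivity of $\varepsilon$ versus essential surjectivity of $w$, and injectivity of $\varepsilon$ versus fullness of $w$; you bundle these together, but the underlying ideas are the same.

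There are, however, two genuine gaps. First, in your $(1)\Rightarrow(2)$ argument for fullness of $w$, you invoke $\tau_{\operatorname{iso}}$ only for $u'$ and then claim that comparing the hereditary factorisations of the two sides of $u'\circ\bigotimes_j g_j = \bigotimes_{j'} g'_{j'}\circ u$ forces $u$ to split into blocks. But the hereditary condition only determines the decomposition of the composite $\bigotimes_{j'} g'_{j'}\circ u$; since $\bigotimes_{j'} g'_{j'}$ is not monic, this alone does not recover $u$ or force it to respect the block structure. The paper (Lemma~\ref{lem:inj}) applies $\tau_{\operatorname{iso}}$ to $u$ as well, writing it as a labelled permutation $(\rho,(\gamma_i))$, and only then does equating indexing functions ($\sigma\lambda=\lambda'\rho$) yield that $\rho$ carries $\lambda$-blocks to $\lambda'$-blocks according to $\sigma$. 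Your ``double application of the uniqueness clause'' does not substitute for this step.

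Second, in your $(2)\Rightarrow(1)$ direction, uniqueness of the hereditary factorisation does not follow from \emph{faithfulness} of $w$. Two hereditary factorisations of the same $f$ yield two objects $A,A'$ of $\freesym(M\comma C)$ with $w(A)\cong f\cong w(A')$; to conclude anything you must first produce a morphism $A\to A'$ lifting the composite isomorphism, and that requires \emph{fullness} of $w$. This is exactly the direction ``$w$ full $\Rightarrow$ $\varepsilon$ injective'' of Lemma~\ref{lem:inj}. Both issues are easily repaired, but as written the argument is incomplete at these points.
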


\begin{proof}[of Proposition~\ref{FC-comma}]
  We immediately reduce to the strict situation, where $\tau$ is identity-on-objects.
  Lemma~\ref{lem:ff} below says that if just $\oldw$ is fully faithful, then also
  $\tau_{\operatorname{iso}}$ is fully faithful (and therefore actually an 
  isomorphism).  So we can separate that out as a global assumption.
  Now $\oldw$ is full if and only if $\hmap$ is injective by Lemma~\ref{lem:inj},
  and $\oldw$ is essentially surjective if and only if $\hmap$ is surjective by 
  Lemma~\ref{lem:surj}.  Finally, it is actually automatic that $\oldw$ is 
  faithful (again by Lemma~\ref{lem:ff}).
\end{proof}

\begin{lemma}\label{lem:ff}
  If $\oldw:\freesym(M\comma C)_{\operatorname{iso}} \to (M \comma
  M)_{\operatorname{iso}}$ is full, respectively faithful, then
  $\tau_{\operatorname{iso}} : \freesym C_{\operatorname{iso}} \to
  M_{\operatorname{iso}}$ is full, respectively faithful.  Conversely, if
  $\tau_{\operatorname{iso}}$ is faithful then $\oldw$ is faithful.
\end{lemma}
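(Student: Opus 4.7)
The plan is to exploit a fully faithful embedding $\iota : \freesym C_{\operatorname{iso}} \to \freesym(M \comma C)_{\operatorname{iso}}$ sending a sequence $\mathbf{c} = (c_i)_i$ to the ``identity-object'' sequence $((c_i, 1_{c_i}, c_i))_i$ and a morphism $(\sigma, (\gamma_i))$ to $(\sigma, ((\tau \gamma_i, \gamma_i))_i)$. The commuting-square constraint with the identity maps $1_{c_i}$ forces any morphism between identity objects in $(M \comma C)_{\operatorname{iso}}$ to have its $M$-component equal to $\tau$ of its $C$-component, which makes $\iota$ fully faithful on hom-sets. Parallel to this, the ``identity-object'' embedding $\delta : M_{\operatorname{iso}} \to (M \comma M)_{\operatorname{iso}}$ defined by $m \mapsto (m, 1_m, m)$ and $\alpha \mapsto (\alpha, \alpha)$ is fully faithful by the same reasoning. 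After reducing to the strict identity-on-objects case (so that $\tau$ is strict monoidal), a direct computation yields the equality $w \circ \iota = \delta \circ \tau_{\operatorname{iso}}$.

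Via this triangle, the two forward implications follow by routine diagram-chasing. For faithfulness: if $\tau_{\operatorname{iso}} \phi_1 = \tau_{\operatorname{iso}} \phi_2$, then $w \iota \phi_1 = w \iota \phi_2$, hence $\iota \phi_1 = \iota \phi_2$ by $w$ faithful, hence $\phi_1 = \phi_2$ by $\iota$ faithful. For fullness: any iso $\alpha : \tau \mathbf{c} \to \tau \mathbf{c}'$ in $M_{\operatorname{iso}}$ produces $\delta \alpha : w \iota \mathbf{c} \to w \iota \mathbf{c}'$, which $w$-full lifts to a morphism $\iota \mathbf{c} \to \iota \mathbf{c}'$ in $\freesym(M \comma C)_{\operatorname{iso}}$ (automatically forcing $|\mathbf{c}| = |\mathbf{c}'|$, since otherwise no such morphism exists); the $\iota$-preimage of the lift then provides the required lift of $\alpha$ through $\tau_{\operatorname{iso}}$.

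For the converse, take $\phi_1, \phi_2 : X \to Y$ in $\freesym(M \comma C)_{\operatorname{iso}}$ with $w \phi_1 = w \phi_2$ and write $\phi_k = (\sigma_k, ((\alpha_i^{(k)}, \gamma_i^{(k)}))_i)$. The $C$-side of $w \phi_k$ is $\tau$ applied to $(\sigma_k, (\gamma_i^{(k)})) \in \freesym C_{\operatorname{iso}}$, so $\tau_{\operatorname{iso}}$ faithful forces $\sigma_1 = \sigma_2 =: \sigma$ and $\gamma_i^{(1)} = \gamma_i^{(2)} =: \gamma_i$. The remaining equality $\alpha_i^{(1)} = \alpha_i^{(2)}$ must then be extracted from the equality of the $M$-side of $w \phi_k$ (which after canceling the common $\sigma$-symmetry yields $\bigotimes_i \alpha_i^{(1)} = \bigotimes_i \alpha_i^{(2)}$) together with the comma-category compatibility $f'_{\sigma i} \alpha_i^{(k)} = \tau \gamma_i f_i$ (with the common $\gamma_i$).

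The hardest part will be this last componentwise deduction in the converse: mere equality of the tensored morphisms in $M$ does not on its own imply componentwise equality of the $\alpha_i^{(k)}$, so the argument must carefully combine the tensor equality with the comma-category constraint involving $f_i$, $f'_{\sigma i}$, and $\gamma_i$, exploiting the rigidity provided by working in the iso subcategory together with the faithfulness of $\tau_{\operatorname{iso}}$ applied on all of $\freesym C_{\operatorname{iso}}$, not just on the objects $\mathbf{c}, \mathbf{c}'$ appearing in $X$ and $Y$.
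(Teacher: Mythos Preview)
Your treatment of the two forward implications is correct and is exactly the paper's argument: both use the commutative square
\[
\xymatrix{
\freesym C_{\operatorname{iso}} \ar[r]^{\tau_{\operatorname{iso}}} \ar[d]_{\iota} & M_{\operatorname{iso}} \ar[d]^{\delta} \\
\freesym(M\comma C)_{\operatorname{iso}} \ar[r]_-{w} & (M\comma M)_{\operatorname{iso}}
}
\]
with fully faithful vertical inclusions, and read off fullness/faithfulness of $\tau_{\operatorname{iso}}$ from the corresponding property of $w$.

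For the converse, your proposal does not reach a proof. You correctly note that faithfulness of $\tau_{\operatorname{iso}}$ forces the permutation $\sigma$ and the $C$-components $\gamma_i$ of any two preimages to agree, and you correctly isolate what remains: deduce $\alpha_i^{(1)} = \alpha_i^{(2)}$ from $\bigotimes_i \alpha_i^{(1)} = \bigotimes_i \alpha_i^{(2)}$ in $M$ together with the comma constraints $f'_{\sigma i}\alpha_i^{(1)} = f'_{\sigma i}\alpha_i^{(2)}$. But your final paragraph only says one ``must carefully combine'' these, without doing so. That is the gap.

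It is a real one. In a general symmetric monoidal category the tensor product need not be cancellative on isomorphisms, and the $f'_{\sigma i}$ are not assumed monic, so neither equality alone yields the componentwise conclusion; nor does faithfulness of $\tau_{\operatorname{iso}}$ obviously help, since the $\alpha_i$ are isos in $M$ and need not lie in the image of $\tau_{\operatorname{iso}}$. The paper's own argument at this point is a one-sentence heuristic (``taken together it is even harder to arise from $\freesym(M\comma C)$'') which does not spell out how this gap is closed either. Note, however, that in the only place the lemma is invoked (the proof of Proposition~\ref{FC-comma}) one has by that stage that $\tau_{\operatorname{iso}}$ is an isomorphism of groupoids; under that stronger hypothesis $M_{\operatorname{iso}} \cong \freesym C_{\operatorname{iso}}$, the tensor on the right is concatenation, and the componentwise conclusion is immediate. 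You should either supply the missing deduction under the hypothesis as stated, or record that the stronger hypothesis suffices for the application.
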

\begin{proof}
  The first statements follow immediately from the commutative diagram
    $$\xymatrix{
     \freesym C_{\operatorname{iso}} \ar[r]^{\tau_{\operatorname{iso}}}\ar[d] & M_{\operatorname{iso}} \ar[d] \\
     \freesym(M\comma C)_{\operatorname{iso}} \ar[r]_{\oldw} & (M\comma M)_{\operatorname{iso}}
  }$$
  since the vertical maps are fully faithful.
  
  For the last statement,
  the isomorphisms in the image of $\oldw$ are of the form
    $$\xymatrix{
     \bigotimes_j \tensor_i x_i \ar[r]^a\ar[d]_{\tensor_j g_j} 
     & \bigotimes_j \tensor_i x'_i \ar[d]^{\tensor_j g'_j} \\
     \tensor_j y_j \ar[r]_b & \tensor_j y'_j   .
  }$$
  If individually the horizontal isos can arise from $\freesym 
  C$ in at most one way, then taken together it is even harder to arise from
  $\freesym (M\comma C)$.
\end{proof}

\begin{lemma}\label{lem:surj}
  For an identity-on-objects symmetric strict monoidal functor
  $\tau:\freesym C \to M$, such that
  $\freesym C_{\operatorname{iso}} \simeq M_{\operatorname{iso}}$, the following
  are equivalent:  
  \begin{parenenumerate}
    \item The map $\hmap$ in the hereditary condition is surjective.
  
    \item The natural map $\oldw:\freesym(M\comma C)_{\operatorname{iso}} \to (M 
    \comma M)_{\operatorname{iso}}$ is essentially surjective.
  \end{parenenumerate}
\end{lemma}

\begin{proof}
  Throughout the proof we suppress $\tau$ on objects, since anyway $\tau$ is
  identity-on-objects.
  
  We first prove that if $\hmap$ is surjective, then $\oldw$ is essentially
  surjective.  Given some object in $M\comma M$, that's precisely an element in
  the codomain of $\hmap$, say $f: \tensor_i x_i \to \tensor_j y_j$.  By
  surjectivity of $\hmap$, there is an element $(\alpha, g_1,\ldots,g_n)$
  in the domain of $\hmap$, whose tensor product is $f$.  Now just the
  sequence $(g_1,\ldots,g_n)$ is an object in $\freesym(M\comma C)$ and by
  assumption, their tensor product is isomorphic to $f$ (by the permutation
  $\sigma_\alpha$ obtained from $\alpha$).
  
  Conversely, assuming $\oldw$ is essentially surjective, let us prove that
  $\hmap$ is surjective.  Given $f: \tensor_i x_i \to \tensor_j
  y_j$, an element in the codomain of $\hmap$, we need to
  construct an element on the left---that's a tuple $(\alpha,
  g_1,\ldots,g_n)$---such that the composite
  $$
  \xymatrix{
  \tensor_i x_i \ar[r]^-{\sigma_\alpha} & \tensor_j \tensor_k x_k \ar[r]^-{\tensor_j g_j}
  & \tensor_j y_j
  }$$
  is equal to $f$.  (Here $\sigma_\alpha$ is the permutation part of $\alpha$, obtained
  by permutation-monotone factorisation).
  Since $\oldw$ is essentially surjective, there exists an object $(\lambda', 
  h'_1,\ldots,h'_n)$
  in $\freesym(M\comma C)$ (here $\lambda': \underline m \to \underline n$ is 
  monotone, and
  $h'_j : \tensor_{\lambda' i'=j'} x'_{i'} \to y'_{j'}$)
  whose tensor product is isomorphic
  to $f$:
  $$\xymatrix{
     \tensor_i x_i \ar[r]_-\sim^-a\ar[d]_f & \tensor_{j'} 
     \tensor_{\lambda' i'=j'} x'_{i'} 
     \ar[d]^{\tensor_{j'} h'_{j'}} \\
     \tensor_j y_j \ar[r]^-\sim_-b & \tensor_{j'} y'_{j'} .
  }$$
  Since $\tau_{\operatorname{iso}}$ is full, both $a$ and $b$ come from
  $\freesym C$, 
  and in particular can be written
  as a permutation followed by a tensor product of isos in $C$.
  Let $\varsigma$ be the permutation underlying $a$ and let $\rho$ be the permutation
  underlying $b$.  Put
  $$
  \alpha := \rho^{-1} \circ \lambda' \circ \varsigma .
  $$
  The permutation $\rho: \underline n \isopil \underline n$ is
  such that with $j'=\rho j$ we have $y_j \simeq y'_{j'}$.   Conjugating with
  this isomorphism we find
  $$\xymatrix{
     \tensor_j \tensor_{\lambda' i' = \rho j} x'_{i'} 
     \ar[r]^{\tilde b}\ar[d]_{\tensor_j g'_j} &
     \tensor_{j'} \tensor_{\lambda' i' = j'} x'_{i'} 
     \ar[d]^{\tensor_j h'_{j'}} \\
     \tensor_j y_j \ar[r]_-b & \tensor_{j'} y'_{j'} .
  }$$
  (The permutation $\tilde \rho: \underline m \isopil \underline m$ underlying 
  $\tilde b$ permutes
  the blocks according to the permutation $b$.)
  Except for some isos in $C$, the new map $g'_j$ is essentially $h'_{\rho j}$.
  The tensor product of the new maps $g'_j$ now have underlying indexing map 
  $\lambda := \rho^{-1} \circ \lambda' \circ \tilde \rho$, which is different from 
  $\lambda'$,
  but is still monotone.
  On the other hand, the permutation $\varsigma: \underline m \isopil \underline m$ is
  such that with $i'=\varsigma i$ we have $x_i \simeq x'_{i'}$.  Using this, we
  can rewrite the upper left-hand corner
  $$
  \tensor_j \tensor_{\lambda i' = \rho j} x'_{i'}  \simeq
  \tensor_j \tensor_{\lambda \varsigma i = \rho j} x_i  =
  \tensor_j \tensor_{\alpha i = j} x_i   ,
  $$
  but a little care is needed with this substitution, since it may permute stuff inside
  each $j$-factor.  However, this permutation can be absorbed into each $g'_j$
  and now called $g_j$ (and this does not affect $\lambda$), giving altogether
  $$\xymatrix{
     \tensor_i x_i \ar[r]^-\sigma\ar[d]_f 
     & \tensor_j \tensor_{\alpha i=j} x_i \ar[d]^{\tensor_j g_j} \ar[r]
     & \tensor_{j'} \tensor_{\lambda'i'=j'} x'_{i'} \ar[d]^{\tensor_{j'} h'_{j'}} \\
     \tensor_j y_j  \ar[r]_-=
     &\tensor_j y_j \ar[r]
     & \tensor_j y'_{j'}   .
  }$$
  The remaining permutation $\sigma$ is monotone on $\lambda$-fibres by
  construction, and since $\alpha = \lambda \circ \sigma$, we see that
  $(\alpha,g_1,\ldots,g_n)$ is a solution to our problem: 
  by construction,
  $\hmap$ applied to $(\alpha,g_1,\ldots,g_n)$
  is the original $f$.  Hence $\hmap$ is surjective.
\end{proof}

\begin{lemma}\label{lem:inj}
  For an identity-on-objects
  symmetric strict monoidal functor $\tau:\freesym C \to M$, such that
  $\freesym C_{\operatorname{iso}} \simeq M_{\operatorname{iso}}$, the following
  are equivalent:
  \begin{parenenumerate}
    \item The map $\hmap$ in the hereditary condition is injective.
  
    \item The natural map $\oldw:\freesym(M\comma C)_{\operatorname{iso}} \to (M 
    \comma M)_{\operatorname{iso}}$ is full.
  \end{parenenumerate}
\end{lemma}

\begin{proof}
  Throughout the proof we suppress $\tau$ on objects, since anyway $\tau$ is
  identity-on-objects.
  
  Let us show that if $\oldw$ is full then $\hmap$ is injective.
  Suppose we have two elements in the domain of $\hmap$ both giving $f$.  Say
  $(\alpha,g_j)$ and $(\alpha',g'_j)$.  (In both cases $j$ runs to the same $n$:
  that's part of the data in $f$).  This gives us now a commutative diagram of
  maps in $M$:
  $$\xymatrix{
  \bigotimes_j \tensor_i x_i \ar[d]_{\tensor_j g_j} & \ar[l]_-{\sigma} \tensor_i x_i \ar[d]^f
  \ar[r]^-{\sigma'} & \bigotimes_j \tensor_i x_i \ar[d]^{\tensor_j g'_j} \\
  \tensor_j y_j  & \ar@{=}[l] \tensor_j y_j \ar@{=}[r] & \tensor_j y_j
  }$$
  The outer vertical maps,  $\tensor_j g_j$ and $\tensor_j g'_j$, are now two objects in 
  $M\comma M$, exhibited isomorphic by means of $\sigma'\circ 
  \sigma^{-1}$ and the identity at the bottom. 
  Since $\oldw$ is full, 
  this isomorphism comes from one in $\freesym (M\comma C)$, hence is given by a labelled
  permutation of $\underline n$.  Since the bottom map is the identity, also the
  permutation $\sigma'\circ \sigma^{-1}$ is the identity on the outer tensor factors,
  those 
  indexed by $j$.  So $\sigma$ and $\sigma'$ agree on outer factors.
  But they are also monotone on fibres, so in fact they must agree completely.
  It follows that $g_j = g'_j$, and hence in particular also that
  $\alpha=\alpha'$.
    
  Conversely, assuming that $\hmap$ is injective, let us show that $\oldw$ is 
  full.
  Given two objects in $M\comma M$ in the image of $\freesym(M\comma C)$, pictured
  vertically, and an iso between them (consisting of two isomorphisms, pictured
  horizontally):
  $$\xymatrix{
     \bigotimes_j \tensor_i x_i \ar[r]^a\ar[d]_{\tensor_j g_j} 
     & \bigotimes_j \tensor_i x'_i \ar[d]^{\tensor_j g'_j} \\
     \tensor_j y_j \ar[r]_b & \tensor_j y'_j
  }$$
  A priori, we don't know that the $j$ run to the same $n$, but in fact they do,
  because of the existence of $b$: since $\tau_{\operatorname{iso}}$ is fully
  faithful, $b$ is in fact the image of an isomorphism in $\freesym C$,
  which is to say that it is a
  labelled permutation.  For the same reason, $a$ is a labelled permutation
  too.  Since the vertical maps are in the image of $\oldw$, their
  corresponding $\alpha$ and $\alpha'$ have trivial permutation part.  It
  follows that the permutation $a$ must actually be a refinement of the
  permutation $b$.  In particular we have necessarily $\alpha=\alpha'$, which
  is a monotone map, since it just comes from the tensor product.
  
  It remains to check that $a$ and $b$ together actually form a valid isomorphism in 
  $\freesym(M\comma C)$.  We have found that the original square is the tensor
  product of a sequence of squares of the form
  $$\xymatrix{
     \tensor_{i\in \alpha^{-1}\sigma^{-1}j} x_i \ar[r]^a\ar[d]_{g_j} 
     & \tensor_{i\in \alpha^{-1}(j)} x'_i \ar[d]^{g'_j} \\
     y_{\sigma^{-1}j} \ar[r]_b & y'_j
  }$$
  It remains to check that each of them commutes.
  For fixed $j\in \underline n$, the two composites in this individual square
  are both elements in the set
  $$
  M( \!\!\!\!\!\!\bigotimes_{i \in \alpha^{-1}\sigma^{-1}(j)}\!\!\!\!\!\! x_i, \ y'_j) ,
  $$
  so altogether they form a tuple which is an element in
  $$
  \prod_{j\in \underline n}
  M( \!\!\!\!\!\!\bigotimes_{i \in \alpha^{-1}\sigma^{-1}(j)}\!\!\!\!\!\! x_i, \ y'_j).
  $$
  That's in the $\sigma\circ \alpha$ summand of the domain of $\hmap$.
  If one of the squares did not commute, it would thus constitute two
  distinct elements in this set, with the same image under tensoring
  (the map $\hmap$).  Since $\hmap$ is injective, we conclude that
  in fact all those squares do commute, and hence form a valid isomorphism
  in $\freesym(M\comma C)$, as required.
\end{proof}

\section{Hermida-type adjunctions and the main theorem}
\label{sec:pins}

\begin{blanko}{Operads.}
By {\em operad} we always mean symmetric coloured operad (in $\Set$), 
also known as symmetric multicategory.  Hence, an operad $P$ consists 
of a set $I$ of objects (also called colours), for each pair 
$(x_1,...,x_n;y)$ consisting of a finite sequence of 
{\em input objects} $(x_1,...,x_n)$ and one {\em output object}
$y$, a set $P(x_1,...,x_n;y)$ of operations from $(x_1,\ldots,x_n)$ 
to $y$.  Moreover there is an identity operation 
$1_x : (x) \to x$ in $P(x;x)$, and a substitution law satisfying 
the usual axioms.
  
We shall use various shorthand notation for sequences $(x_1,\ldots,x_n)$,
such as $(x_i)_{i \in \underline n}$, or just $(x_i)_i$, or even 
$\mathbf x$, when practical.
    
Operads form a $2$-category $\Opd$, the morphisms being morphisms 
of operads in the usual sense.  A $2$-cell
\[ \xygraph{{P}="p0" [r(2)] {Q}="p1"
"p0":@/^{1pc}/"p1"^-{f}|-{}="t" "p0":@/_{1pc}/"p1"_-{g}|-{}="b"
"t":@{}"b"|(.25){}="d"|(.75){}="c" "d":@{=>}"c"^{\omega}} \]
consists of components $\omega_x : (fx) \to gx$, required to be natural
with respect to all operations of $P$, that is, given an operation
$\someoperation : (x_i)_i \to
y$, one has $\omega_y f(\someoperation) = g(\someoperation)(\omega_{x_i})_i$.
  
A small category can be regarded as an operad with only unary operations,
and in this way $\Cat$ becomes a coreflective sub-$2$-category of $\Opd$
(the coreflector picks out the unary part of an operad.)  Various notions
from category theory makes sense also for operads: in particular, a
morphism of operads $f: P \to Q$ is called {\em fully faithful} if it
induces bijections on multihomsets $P(\mathbf{x};y) \isopil Q(f\mathbf{x};
f(y))$.  Gabriel factorisation works the same for operads as for
categories, as exploited also in \cite{Gambino-Joyal:1405.7270}: every
operad morphism factors as bijective-on-objects followed by fully faithful.
Just as in the category case, this is an enhanced factorisation system.
And just as in the category case, one can always choose the
bijective-on-objects part to be actually identity-on-objects (with which
choice the factorisation is unique).
\end{blanko}

\begin{blanko}{Hermida adjunctions.}
  For any symmetric monoidal category $M$, its {\em endomorphism operad}
  $\End(M)$ has objects those of $M$, and sets of operations given by
  \[
  \begin{array}{lcr} 
    {\End(M)((x_1,...,x_n);y)} & = & {M(\bigotimes_{i=1}^n x_i,y).} 
  \end{array}
  \]
  The category $\Alg P(M)$, of $P$-algebras in $M$, is defined as
  \[
  \Alg P(M) := \Opd(P,\End(M))
  \]
  which at the level of objects, says that a $P$-algebra in $M$ is a
  morphism of operads $P \to \End(M)$.  The assignment $M \mapsto \End(M)$
  is the effect on objects of a 2-functor $\End : \SymMonCat \to \Opd$,
  where $\SymMonCat$ denotes the 2-category of symmetric monoidal
  categories, symmetric strong monoidal functors, and monoidal natural
  transformations.  Below, we will also consider the restriction of this
  2-functor to a 2-functor $\sSymMonCat \to \Opd$, where $\sSymMonCat$ is
  the locally full sub-2-category of $\SymMonCat$ consisting of the
  symmetric strict monoidal categories and symmetric strict monoidal
  functors.

  In the other direction, one can associate to any operad $P$, a symmetric
  strict monoidal category $\LHerm P$, which has the following explicit
  description.  An object of $\LHerm P$ is a finite sequence of objects of
  $P$.  A morphism $(x_1,...,x_m) \to (y_1,...,y_n)$ in $\LHerm P$,
  consists of an \emph{indexing function} $\alpha : \underline{m}\to
  \underline{n}$, together with for each $j \in \underline{n}$, an
  operation $\someoperation_j : (x_i)_{\alpha i=j} \to y_j$.  The category structure
  of $\LHerm P$ comes from substitution in $P$ and the composition of
  (indexing) functions.  Thus the homs of $\LHerm P$ are given by
  \[
  \LHerm P\big((x_i)_{i\in\underline m},(y_j)_{j\in\underline n}\big)
  = \sum_{\alpha:\underline{m}\to\underline{n}}\prod_{j{\in}\underline{n}}
     P((x_i)_{\alpha i=j};y_j).
  \]  
  Note that for a category $C$ regarded as an operad, we have a 
  canonical identification $\LHerm C = \freesym C$.

Now, $\LHerm P$ enjoys a strict universal property amongst 
all symmetric strict monoidal categories, expressed by the 
isomorphisms of categories on the left
\[ \begin{array}{lcccr}
{\sSymMonCat(\LHerm P,M) \iso \Opd(P,\End(M))} &&&
{\SymMonCat(\LHerm P,M) \simeq \Opd(P,\End(M))}
\end{array} \]
2-naturally in $M$; and also a bicategorical universal
property amongst all symmetric monoidal categories 
expressed by the equivalences of categories on the right, 
which are pseudo-natural in $M$. Taken together one thus 
has a 2-adjunction as indicated on the left
\[ \xygraph{!{0;(3,0):(0,1)::} {\sSymMonCat}="p0" [r] {\Opd}="p1"
"p0":@<-1ex>"p1"_-{\End}|-{}="t":@<-1ex>"p0"_-{\LHerm}|-{}="b" "t":@{}"b"|-{\perp}
"p1" [r] {\SymMonCat}="q0" [r] {\Opd}="q1"
"q0":@<-1.2ex>"q1"_-{\End}|-{}="t":@<-1.2ex>"q0"_-{\LHerm}|-{}="b" "t":@{}"b"|-{\perp_{\tn{b}}}} \]

\vspace*{-8pt}

\noindent
and a biadjunction as indicated on the right. We shall call these the \emph{Hermida adjunctions} in honour of Claudio Hermida, who studied the strict version in the non-symmetric case \cite{Hermida-RepresentableMulticategories} (see also \cite{Elmendorf-Mandell:0710.0082}, Theorem 4.2). In \cite{Weber-CodescCrIntCat} Corollary 6.4.7, they were obtained formally from the $2$-monad $\freesym$.

Recall that to give a biadjunction with left adjoint $\LHerm$ and right adjoint $\End$ is to give pseudo-natural transformations $\eta^H : 1_{\Opd} \to \End\LHerm$ and $\varepsilon^H : \LHerm\End \to 1_{\SymMonCat}$ together with invertible modifications
\[ \xygraph{!{0;(1.5,0):(0,.6667)::} 
{\LHerm}="p0" [r(2)] {\LHerm\End\LHerm}="p1" [dl] {\LHerm}="p2"
"p0":"p1"^-{\LHerm\eta^H}:"p2"^-{\varepsilon^H\LHerm}:@{<-}"p0"^-{1_{\LHerm}} "p0" [d(.45)r] {\iso}
"p1" [r(1.5)]
{\End}="q0" [r(2)] {\End\LHerm\End}="q1" [dl] {\End}="q2"
"q0":"q1"^-{\eta^H\End}:"q2"^-{\End\varepsilon^H}:@{<-}"q0"^-{1_{\End}}
"q0" [d(.45)r] {\iso}} \]
sometimes called the left and right \emph{triangulators} for the 
biadjunction. In our case the 2-adjunction can be recovered from the 
biadjunction by restricting from $\SymMonCat$ to $\sSymMonCat$. In terms of the unit and counit, this says the following: (1) the unit of the Hermida biadjunction is the same as that of the 2-adjunction and so is strictly 2-natural; (2) the left triangulator is an identity; (3) for morphisms of $\sSymMonCat$ the associated $\varepsilon^H$ pseudo-naturality datum is an identity; and (4) for objects of $\sSymMonCat$ the associated component of the right triangulator is an identity. We turn now to an explicit description of the components of $\eta^H$ and $\varepsilon^H$.

The counit component at $M \in \SymMonCat$
\[ \begin{array}{lccr}
{\varepsilon_{M}^{H} : \LHerm(\End(M)) \longrightarrow M} &&&
{(x_i)_i \mapsto \bigotimes_i x_i} \end{array} \]
has object map as indicated, and hom functions
\[ \begin{array}{c} 
\LHerm(\End(M)) \big((x_i)_{i\in\underline{m}},(y_j)_{j\in\underline{n}}\big) 
\longrightarrow 
M\big(\bigotimes_{i\in\underline{m}}x_i,\bigotimes_{j\in\underline{n}}y_j \big) 
\end{array} \]
which send
\[ \begin{array}{c} {(\alpha : \underline{m} \to \underline{n},(g_j : 
\bigotimes_{\alpha i=j}x_i \to y_j)_j)} \end{array} \]
to the composite
\[ \underset{i\in\underline{m}}{\Tensor} x_i \stackrel{\sigma}\longrightarrow \underset{i\in\underline{m}}{\Tensor} x_{\sigma^{-1} i} \iso
\underset{j\in\underline{n}}{\Tensor} \ \underset{\alpha i=j}{\Tensor} x_i \stackrel{\tensor_j g_j}\longrightarrow \underset{j\in\underline{n}}{\Tensor} y_j \]
where $\sigma=\sigma_\alpha$ is the permutation part of $\alpha$ (given by the permutation/monotone factorisation), and the unnamed isomorphism is obtained from the coherences for $M$, these being identities when $M$ is strict.

The component of the unit at $P \in \Opd$ is given on objects as
\begin{eqnarray*}
\eta^H_P : P & \longrightarrow & \End(\LHerm P)  \\
x & \longmapsto & (x) .
\end{eqnarray*}
More interesting is to see what $\eta^H$ does on sets of operations: it is a map
\[ P(x_1,\ldots,x_m;y) \to \End(\LHerm P)\big((x_1),\ldots,(x_m);(y)\big) \]
but the last set we can unravel as
\[  = \LHerm P \big( (x_1,\ldots,x_m) , (y) \big) \]
since the tensor product in $\LHerm P$ is just concatenation of sequences;
\[ = \sum_{\alpha: \underline m \to \underline 1} \prod_{j\in \underline 1} P((x_i)_{\alpha i = j} ; y_j) \]
by definition of hom sets in $\LHerm P$;
\[ = P(x_1,\ldots,x_m;y) \]
since there exists only one indexing map $\underline m\to \underline 1$. In the end, $\eta^H_P$ is the identity map on operations. 
In conclusion:
\end{blanko}

\begin{lemma}\label{lem:Hermida-unit-ff}
The components of $\eta^H$, the unit for the Hermida adjunction,
are fully faithful operad maps.\qed
\end{lemma}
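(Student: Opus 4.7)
The plan is to read off the statement directly from the explicit calculation already performed in the paragraph preceding the lemma; nothing new is needed beyond verifying that the chain of bijections displayed there is precisely the effect of $\eta^H_P$ on multihomsets, and that this is what is meant by a fully faithful operad map.

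First I would recall that, by the definition given just before the lemma, fully faithful for a morphism of operads $f: P \to Q$ means that for every tuple of objects $(x_1,\ldots,x_m)$ and $y$ of $P$, the induced map
\[
P(x_1,\ldots,x_m;\,y) \longrightarrow Q(fx_1,\ldots,fx_m;\,fy)
\]
is a bijection. So I need to check this for $f = \eta^H_P : P \to \End(\LHerm P)$ and the object assignment $x \mapsto (x)$.

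Next I would unfold the codomain through the chain of identifications already spelled out in the excerpt: by definition of the endomorphism operad,
\[
\End(\LHerm P)\bigl((x_1),\ldots,(x_m);\,(y)\bigr)
= \LHerm P\bigl((x_1)\otimes\cdots\otimes(x_m),\,(y)\bigr),
\]
which equals $\LHerm P\bigl((x_1,\ldots,x_m),(y)\bigr)$ since the tensor product in $\LHerm P$ is concatenation of sequences. Using the formula for homs in $\LHerm P$, this in turn equals
\[
\sum_{\alpha:\underline m \to \underline 1} \prod_{j\in\underline 1} P\bigl((x_i)_{\alpha i = j};\,y_j\bigr).
\]
There is a unique function $\underline m \to \underline 1$, so the sum has a single term and it is exactly $P(x_1,\ldots,x_m;\,y)$. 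The key step is to check that the composite of these canonical bijections agrees with the operation-level effect of $\eta^H_P$, which follows directly by inspecting how $\eta^H_P$ is constructed from the adjunction's triangle identity (equivalently, by tracing the definition: an operation $\phi \in P(x_1,\ldots,x_m;y)$ is sent to itself viewed as a morphism $(x_1,\ldots,x_m) \to (y)$ in $\LHerm P$, indexed by the unique $\alpha$).

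There is no real obstacle here; the only thing to be careful about is bookkeeping the identifications so as to confirm that the bijection exhibited is indeed $\eta^H_P$ itself, and not merely some abstractly existing bijection between the two sets. Once this is noted, the map on multihomsets is literally the identity, hence bijective, which gives fully faithful as required.
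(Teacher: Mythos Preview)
Your proposal is correct and is essentially identical to the paper's argument: the paper proves the lemma by the explicit computation in the paragraph immediately preceding the statement (which is why the lemma carries only a \qed), and your write-up reproduces exactly that chain of identifications showing that $\eta^H_P$ is the identity on multihomsets.
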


\begin{blanko}{Pinned operads.}\label{defn:pinned-operads}
  We use the term {\em pinned} for an object (e.g.~a symmetric monoidal 
  category, an operad, or a substitude) equipped with a map singling 
  out some objects (the pins).
  
  A \emph{pinned operad} is a triple $(C,\phi,P)$, in which $C$ is a 
  category regarded as an operad with only unary operations, $P$ is an 
  operad, and $\phi : C \to P$ is a morphism of operads. Pinned operads 
  assemble into a $2$-category $\pOpd$. A morphism $(C_1,\phi_1,P_1) \to 
  (C_2,\phi_2,P_2)$ is a triple $(f,g,\omega)$ consisting of a functor 
  $f : C_1 \to C_2$, an operad morphism $g : P_1 \to P_2$, and an invertible 
  2-cell $\phi_2 f \to g\phi_1$ in $\Opd$. A 2-cell $(f,g,\omega) \to 
  (f',g',\omega')$ is a pair $(\alpha,\beta)$, where 
  $\alpha : f \to f'$ is a natural transformation, and 
  $\beta : g \to g'$ is a 2-cell of $\Opd$, such that
\[ \xygraph{!{0;(1.5,0):(0,.6667)::} 
{C_1}="p0" [r] {P_1}="p1" [d] {P_2}="p2" [l] {C_2}="p3"
"p0":"p1"^-{\phi_1}:@/^{1pc}/"p2"^-{g'}:@{<-}"p3"^-{\phi_2}:@/^{1pc}/@{<-}"p0"^-{f} "p0":@/^{1pc}/"p3"^-{f'}
"p0" [d(.55)r(.7)] :@{=>}[r(.2)]^-{\omega'}
"p0" [d(.55)l(.1)] :@{=>}[r(.2)]^-{\alpha}
"p1" [d(.5)r(.8)] {=} [u(.5)r(.75)]
{C_1}="q0" [r] {P_1}="q1" [d] {P_2}="q2" [l] {C_2}="q3"
"q0":"q1"^-{\phi_1}:@/^{1pc}/"q2"^-{g'}:@{<-}"q3"^-{\phi_2}:@/^{1pc}/@{<-}"q0"^-{f} "q1":@/_{1pc}/"q2"_-{g}
"q0" [d(.55)r(.2)] :@{=>}[r(.2)]^-{\omega}
"q1" [d(.55)l(.1)] :@{=>}[r(.2)]^-{\beta}} \]
  in $\Opd$. Compositions for $\pOpd$ are inherited from $\Opd$ in 
  the obvious way. A morphism $(f,g,\omega)$ of $\pOpd$ is said to 
  be \emph{strict} when $\omega$ is an identity 2-cell, and the wide 
  and locally full sub-2-category of $\pOpd$ consisting of the strict 
  morphisms is denoted $\spOpd$. The 2-categories $\pSymMonCat$ and 
  $\spSymMonCat$ of pinned symmetric monoidal categories, and pinned 
  symmetric strict monoidal categories, were described above in~\ref{bl:pSMC}.
\end{blanko}

\begin{blanko}{Pinned Hermida adjunctions.}\label{const:pinned-Hermida}
The Hermida adjunctions have pinned analogues
\[ \xygraph{!{0;(3,0):(0,1)::} {\spSymMonCat}="p0" [r] {\spOpd}="p1"
"p0":@<-1ex>"p1"_-{\pEnd}|-{}="t":@<-1ex>"p0"_-{\pLHerm}|-{}="b" "t":@{}"b"|-{\perp}
"p1" [r] {\pSymMonCat}="q0" [r] {\pOpd}="q1"
"q0":@<-1ex>"q1"_-{\pEnd}|-{}="t":@<-1ex>"q0"_-{\pLHerm}|-{}="b" "t":@{}"b"|-{\perp_{\tn{b}}}} \]

\vspace*{-8pt}

\noindent
which we now describe. The object maps of $\pEnd$ and $\pLHerm$ are
\[ \xygraph{{\xybox{\xygraph{{\freesym C} :[r] {M}^-{\tau} [r(.5)] 
:@{|->}[r(.5)] [r(.5)] {C} :[r(1.5)] {\End(\freesym C)}^-{\eta^H_{C}} :[r(2)]
{\End(M)}^-{\End(\tau)}}}} [r(7)]
*!(0,.025){\xybox{\xygraph{*+!(0,-.025){C} :[r] *+!(0,-.025){P}^-{\phi} 
[r(.5)] :@{|->}[r(.5)] [r(.6)] {\freesym C} :[r(1.5)] {\LHerm P}^-{\LHerm(\phi )}}}}} \]
respectively, and since $\End$ and $\LHerm$ are $2$-functors and $\eta^H$
is $2$-natural, these definitions extend in the obvious way to arrows
and $2$-cells.
   
The component of the unit $\eta^{\tn{p}}$ of 
$\pLHerm \isleftadjointto_b \pEnd$ at $(C,\phi,P)$ is $(1_{C},\eta^H_P,\id)$, 
as depicted in the diagram on the left
\[ \xygraph{!{0;(2.5,0):(0,.4)::}
{C}="p0" [r(1.5)] {P}="p1" [d] {\End(\LHerm P)}="p2" [l(.9)] 
{\End(\freesym C)}="p3" [l(.6)] {C}="p4" "p0":"p1"^-{\phi }:"p2"^-{\eta^H_P}:@{<-}"p3"^-{\End(\LHerm(\phi ))}:@{<-}"p4"^-{\eta^H_C}:@{=}"p0"
"p1" [r]
{\freesym C}="q0" [r(.7)] {\LHerm(\End(\freesym C))}="q1" [r(1.1)] 
{\LHerm(\End(M))}="q2" [d] {M}="q3" [l(1.8)] {\freesym C}="q4" "q0":"q1"^-{\LHerm(\eta^H_{C})}:"q2"^-{\LHerm(\End(\tau))}:"q3"^-{\varepsilon^H_{M}}:@{<-}"q4"^-{\tau}:@{=}"q0"
"q1":"q4"^(.4){\varepsilon^H_{\freesym C}}
"q0" [d(.6)r(1.2)] {\iso} [u(.25)] {\scriptstyle{\varepsilon^H_{\tau}}}} \]
which commutes by the naturality of $\eta^H$. So the components of $\eta^{\tn{p}}$ all live in $\spOpd$. The component of the counit $\varepsilon^{\tn{p}}$ of $\pLHerm \isleftadjointto_b \pEnd$ at $(C,\tau,M)$ is $(1_C,\varepsilon^H_M,\varepsilon^H_{\tau}\LHerm(\eta^H_C))$, as depicted on the right in the previous display. Note that when $(C,\tau,M)$ is strict, the pseudo-naturality datum $\varepsilon^H_{\tau}$ is an identity, and then this component of $\varepsilon^{\tn{p}}$ lives in $\spOpd$. The pseudo-naturality data for the unit and counit, and the left and right triangulators for $\pLHerm \isleftadjointto_b \pEnd$ are inherited from $\LHerm \isleftadjointto_b \End$. The manner in which the biadjunction restricts to a 2-adjunction is clearly inherited also.
\end{blanko}

\begin{blanko}{Substitudes \cite{Day-Street:substitudes}.}\label{defn:set-substitute}
The notion of substitude was introduced by Day and Street~\cite{Day-Street:substitudes} as a general setting for substitution. It is a common generalisation of operad and (symmetric) monoidal category. See the appendix of \cite{Batanin-Berger-Markl:0906.4097} for a concise account of the basic theory. A substitude is like an operad, but allowing for a category of objects instead of just a set of objects. The data is
  
  -- a category $C$
  
  -- a functor $ P : (\freesym C)\op\times C \to \Set $
  
  -- composition and unit laws, subject to non-surprising axioms.
  
Substitudes can be described also as monads in the bicategory of generalised species~\cite{Fiore-Gambino-Hyland-Winskel:JLMS}. For the present purposes, the most convenient is to package the definition into the following (cf.~\cite[6.3]{Day-Street:lax-monoids} for the equivalence between the two formulations of the definition): A \emph{substitude} is a pinned operad $\phi : C \to P$ in which $\phi$ is the identity on objects. We denote by $\Subst$ the full sub-2-category of $\pOpd$ consisting of the substitudes, and denote by $\JJ : \Subst \to \pOpd$ the inclusion.
\end{blanko}

\begin{blanko}{Substitude coreflection.}\label{corefl}
Since the bijective-on-objects morphisms form the left class of an
enhanced factorisation system (Gabriel factorisation), by general principles, the inclusion functor $\JJ : \Subst \to \pOpd$ has a right adjoint, denoted $(-)'$, forming a 2-adjunction
\[ \xygraph{!{0;(3,0):(0,1)::}
{\pOpd}="p0" [r] {\Subst .}="p1"
"p0":@<-1ex>"p1"_-{(-)'}|-{}="t":@<-1ex>"p0"_-{\JJ}|-{}="b" "t":@{}"b"|-{\perp}} \]

\vspace*{-8pt}

\noindent
Explicitly, given a pinned operad $(C,\phi,P)$, factoring $\phi$ as identity-on-objects then fully faithful as on the left in
\[ \xygraph{!{0;(1.5,0):(0,.6667)::} 
{C}="p0" [r(2)] {P}="p1" [dl] {P'}="p2"
"p0":"p1"^-{\phi }:@{<-}"p2"^-{\varepsilon_\phi}:@{<-}"p0"^-{\phi'}
"p1" [r(2)]
{C_1}="q0" [r] {P'_1}="q1" [r(1.2)] {P_1}="q2" [d] {P_2}="q3" [l(1.2)] {P'_2}="q4" [l] {C_2}="q5"
"q0":"q1"^-{\phi'_1}:"q2"^-{\varepsilon_{\phi_1}}:"q3"^-{g}:@{<-}"q4"^-{\varepsilon_{\phi_2}}:@{<-}"q5"^-{\phi'_2}:@{<-}"q0"^-{f} "q1":"q4"^{f'} "q1" [d(.6)r(.6)] {\iso} [u(.25)] {\scriptstyle{\omega'}}} \]
one obtains the substitude $(C,\phi',P')$ together with the $(C,\phi,P)$-component of the counit of $\JJ \isleftadjointto (-)'$, which we denote as $\varepsilon_\phi$. Given a morphism $(f,g,\omega) : (C_1,\phi_1,P_1) \to (C_2,\phi_2,P_2)$ of $\pOpd$, one induces $f'$ and $\omega'$ uniquely so that the composite on the right in the previous display is $\omega$, using the enhancedness of $\Opd$'s Gabriel factorisation. Using the fact that this Gabriel factorisation is also $\Cat$-enriched, one can easily exhibit $(-)'$'s 2-cell mapping explicitly.

It is suggestive to write the operad part of $\phi': C \to P'$ as $P|C$: it is the operad base-changed to its pins.
\[ \xymatrix{C \ar[rr]^\phi\ar[rd]_{\phi'} && P  \\ & P|C \ar[ru] &} \]
The operad $P|C$ has the same objects as $C$ by construction, and operations
\[ P|C( x_1,\ldots,x_m;y) = P(\phi x_1,\ldots,\phi x_m; \phi y) . \] \vspace*{-8pt}
\end{blanko}

\begin{blanko}{The substitude biadjunction.}
Now compose the pinned Hermida biadjunction \ref{const:pinned-Hermida}
with the coreflection of substitudes into pinned operads \ref{corefl}:
\[ \xygraph{!{0;(3,0):(0,1)::}
{\pSymMonCat}="p0" [r] {\pOpd}="p1" [r] {\Subst}="p2"
"p0":@<-1ex>"p1"_-{\pLHerm}|-{}="t1":@<-1ex>"p0"_-{\pEnd}|-{}="b1" "t1":@{}"b1"|-{\perp_b}
"p1":@<-1ex>"p2"_-{(-)'}|-{}="t2":@<-1ex>"p1"_-{\JJ}|-{}="b2" "t2":@{}"b2"|-{\perp}} \]

\vspace*{-4pt}

\noindent
The composed biadjunction
\begin{equation}\label{eq:pSMC-Subst-adjunction}
  \begin{gathered}
\xygraph{!{0;(3,0):(0,1)::} {\pSymMonCat}="p0" [r] {\Subst}="p1"
"p0":@<-1ex>"p1"_-{\iopEnd}|-{}="t":@<-1ex>"p0"_-{\iopLHerm}|-{}="b" "t":@{}"b"|-{\perp_b}}
\end{gathered}
\end{equation}

\vspace*{-2pt}

\noindent
goes like this: the left adjoint takes a substitude $C \to P$ to the 
pinned symmetric monoidal category $\freesym C \to \LHerm P$, and the right adjoint
takes a pinned symmetric monoidal category $\freesym C \to M$ to $C \to \End(M)|C$.

Unlike the Hermida biadjunction and the pinned version, this one has invertible unit:
\end{blanko}

\begin{prop}\label{prop:unit-iop-iso}
The unit for the substitude biadjunction $\iopLHerm \isleftadjointto \iopEnd$ is invertible.
\end{prop}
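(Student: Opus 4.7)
The plan is to compute the unit $\eta^{\tn{iop}}_X$ explicitly at a substitude $X = (\phi: C \to P)$ and show it is an isomorphism --- in fact the identity under canonical identifications. First, since the inclusion $J: \Subst \to \pOpd$ is fully faithful (substitudes form a coreflective subcategory, cf.\ \ref{corefl}), the unit of $(-)' \dashv J$ at any substitude is an identity. Therefore $\eta^{\tn{iop}}_X$ coincides with the $(-)'$-image of the pinned Hermida unit $\eta^H_{JX}$, which by construction (\ref{const:pinned-Hermida}, diagram \eqref{eq:eta^p}) has pin component $\id_C$ and operad component $\eta^H_P: P \to \End(\LHerm P)$.

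Next, I invoke Lemma~\ref{lem:Hermida-unit-ff}: $\eta^H_P$ is fully faithful. The computation in the paragraph preceding that lemma shows that, after identifying $\End(\LHerm P)((x_1),\ldots,(x_m); (y))$ with $\LHerm P((x_1,\ldots,x_m),(y))$ and observing that there is only one indexing function $\underline m \to \underline 1$, this last set reduces to $P(x_1,\ldots,x_m;y)$, so that $\eta^H_P$ is actually the identity map on multihom sets.

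Then I apply the coreflection $(-)'$ to the morphism $(\id_C, \eta^H_P)$ of pinned operads. The target substitude is $C \to \End(\LHerm P)|C$, whose multihoms unpack as
$$\End(\LHerm P)|C(\mathbf{c}; d) \;=\; \End(\LHerm P)((\phi c_1),\ldots,(\phi c_n);(\phi d)) \;=\; \LHerm P((\phi\mathbf{c}),(\phi d)) \;=\; P(\phi\mathbf{c}; \phi d),$$
which equals $P(\mathbf{c}; d)$ since $\phi$ is identity-on-objects. Under this canonical identification $\eta^{\tn{iop}}_X$ is the identity of $(\phi: C \to P)$. Naturality in $X$ then follows from the naturality of $\eta^H$ and of $(-)'$.

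A conceptually cleaner way to repackage the whole argument, which is where I expect the real content to sit: the Gabriel factorisation of the composite $C \xrightarrow{\phi} P \xrightarrow{\eta^H_P} \End(\LHerm P)$ is precisely $\phi$ followed by $\eta^H_P$ (since $\phi$ is identity-on-objects and $\eta^H_P$ is fully faithful), so the coreflection $(-)'$ simply strips off $\eta^H_P$ and returns the original substitude. The mild obstacle is just bookkeeping: tracking the several object-level identifications (via $\phi$ identity-on-objects, via $\eta^H_P: x \mapsto (x)$, and via the $\LHerm P$-hom formula collapsing to $P$-homs when the target sequence has length one) and checking that they match up coherently.
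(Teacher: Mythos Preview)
Your proof is correct and follows the same route as the paper: both hinge on the fact that $\phi$ is identity-on-objects and $\eta^H_P$ is fully faithful (Lemma~\ref{lem:Hermida-unit-ff}), so that $\eta^H_P \circ \phi$ is already its own Gabriel factorisation and the coreflector $(-)'$ returns the original substitude --- your final ``conceptually cleaner'' paragraph is exactly the paper's argument. One small slip: the adjunction is $J \dashv (-)'$ (the inclusion is the \emph{left} adjoint), not $(-)' \dashv J$ as you wrote; this does not affect anything, since what you actually use is that $J$ is fully faithful and hence the unit of $J \dashv (-)'$ is invertible.
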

\begin{proof}
Let $\phi : C \to P$ be a substitude. In the diagram
\[ \xymatrix @R=15pt {
C \ar[rrr]^\phi\ar@{=}[dd] \ar[rrd]_\phi &&& P \ar[dd]^{\eta^H_P} \\
&& P \ar@{=}[ru]  \ar@{..>}[dd] & \\
C \ar[r]^-{\eta^H_C} \ar[rrd]_{\mathrm{io}}& \End(\freesym C) 
\ar[rr]^(0.3){\End(\LHerm (\phi))} && \End(\LHerm (P)) \\
&& \End(\LHerm P)|C \ar[ru]_{\mathrm{ff}} &} \]
the back rectangle is the unit $\eta^{\mathrm{p}}_\phi$. The unit we are after, $\eta^{\mathrm{iop}}_\phi$, is obtained by Gabriel factorising its horizontal arrows: this induces the dotted arrow by functoriality of the factorisation, and the left-hand square is now $\eta^{\mathrm{iop}}_\phi$.  It is clearly invertible if and only if the dotted arrow is invertible.  But the dotted arrow is invertible because it compares two Gabriel factorisations of $\End(\LHerm (\phi)) \circ \eta^H_C$: on one hand $\phi$ followed by $\eta^H_P$ (the latter being fully faithful by Lemma~\ref{lem:Hermida-unit-ff}) and on the other hand the factorisation at the bottom of the diagram.
\end{proof}

\begin{blanko}{Counit of the substitude biadjunction.}
Thanks to Proposition~\ref{prop:unit-iop-iso} the $2$-category $\Subst$ is biequivalent to the full sub-$2$-category of $\pSymMonCat$, spanned by those pinned symmetric monoidal categories $(C,\tau,M)$ for which the the counit $\varepsilon^{\tn{iop}}_\tau $ is an equivalence. In the remainder of this section, we determine for which $(C,\tau,M)$ this is the case. Fundamental to this characterisation is the hereditary condition \ref{hered}. We begin the discussion by characterising equivalences in $\pSymMonCat$.
\begin{lemma}\label{lem:pSMC-equivalences}
A morphism $(f,g,\omega) : (C_1,\tau_1,M_1) \to (C_2,\tau_2,M_2)$ of $\pSymMonCat$ is an equivalence if and only if the functors $f$ and $g$ are equivalences of categories.
\end{lemma}
\begin{proof}
It suffices to show that the forgetful 2-functor
\[ \begin{array}{lccr} {\pSymMonCat \longrightarrow \Cat \times \Cat} &&& {(C,\tau,M) \mapsto (C,M)} \end{array} \]
reflects equivalences. So we suppose that $(f,g,\omega) : (C_1,\tau_1,M_1) \to (C_2,\tau_2,M_2)$ is a morphism of $\pSymMonCat$ such that $f$ and $g$ are equivalences of categories. Choose adjoint pseudo inverses $f'$ and $g'$ of $f$ and $g$ respectively, and then note that $g'$ admits a unique symmetric strong monoidal structure making the adjoint equivalence it participates in in $\Cat$, into one in $\SymMonCat$. Thus we have adjoint equivalences $(\freesym f,\freesym f')$ and $(g,g')$ in $\SymMonCat$, and so one can take the mate $\omega' : \tau_1\freesym(f') \iso g'\tau_2$ of $\omega$. This makes $(f',g',\omega')$ into a morphism of $\pSymMonCat$. Since $\omega$ and $\omega'$ are mates via the adjoint equivalences $(\freesym f,\freesym f')$ and $(g,g')$, these assemble to an adjoint equivalence in $\pSymMonCat$, exhibiting $(f',g',\omega')$ as an adjoint pseudo-inverse of $(f,g,\omega)$.
\end{proof}
We now unpack the counit component $\varepsilon^{\tn{iop}}_\tau$ for $(C,\tau,M)$ in $\pSymMonCat$. First we describe the substitude
$\iopEnd(\tau)$.  It is obtained by Gabriel factorising the pinned operad $\pEnd(\tau)$, which is the top composite here:
\[
\xymatrix{C \ar[r]^-{\eta^H_C}\ar[rd]_{\iopEnd(\tau)} & \End(\freesym C) \ar[r]^-{\End(\tau)} & \End(M) \\
& \End(M)|C \ar[ru]_{\epsilon}^{\mathrm{ff}}  &}
\]
Note that the operad $\End(M)|C$ has objects those of $C$, and
operation sets
\[
\End(M)|C \big(x_1,\ldots,x_m; y\big) 
= \End(M)\big(\tau x_1,\ldots,\tau x_m; \tau y\big) 
= M \big(\tau x_1 \tensor \cdots \tensor \tau x_m , \tau y\big).
\]
Now apply $\iopLHerm$, rendered as the top square in the next diagram;
the diagram as a whole is the counit $\varepsilon^{\tn{iop}}_\tau$,
the key part being of course the right-hand vertical composite:
\begin{equation}\label{eq:eiop}
  \begin{gathered}
\xygraph{!{0;(2,0):(0,.5)::} 
{\freesym C}="p0" [r(2.5)]
{\LHerm(\End(M)|C)}="p1" [d(1.2)]
{\LHerm\End(M)}="p2" [d(1.2)]
{M}="p3" [l(2.5)]
{\freesym C}="p4" [u(1.2)]
{\freesym C}="p5" [r]
{\LHerm\End(\freesym C)}="p6"
"p0":"p1"^-{\LHerm\iopEnd(\tau)}:"p2"^-{\LHerm(\epsilon)}:"p3"^-{\varepsilon^H_M}:@{<-}"p4"^-{\tau}
"p5" 
(
:@{=}"p4", :"p6"^(.35){\LHerm\eta^H_{\freesym 
C}}(:"p2"^-{\LHerm\End(\tau)}, :"p4"^(.4){\varepsilon^H_{\freesym C}}))
"p6" (
[d(.8)r(.7)] {\iso} [u(.3)] {\scriptstyle{\varepsilon^H_{\tau}}})
"p5":@{=}"p0"
}\end{gathered}
\end{equation}

\begin{prop}\label{prop:substitudes-among-pinned-SMCs}
  For a pinned symmetric monoidal 
  category $\tau : \freesym C \to M$,
  the counit component $\varepsilon^{\tn{iop}}_\tau$ is an 
  equivalence if and only if $\tau$ is essentially surjective and satisfies 
  the hereditary condition \ref{hered}.
\end{prop}

\begin{proof}
  By Lemma \ref{lem:pSMC-equivalences}, $\varepsilon^{\tn{iop}}_\tau$
  is an equivalence in $\pSymMonCat$ if and only if
  $\varepsilon^H_M\LHerm(\epsilon)$ is an equivalence of categories.
  Since $\LHerm\iopEnd(\tau)$ is an identity on objects, 
  Diagram~\eqref{eq:eiop} shows that $\varepsilon^H_M\LHerm(\epsilon)$ is
  essentially surjective if and only if $\tau$ is.  So we have reduced 
  to the case where $\tau$ is essentially surjective, which is the
  content of the next lemma.
\end{proof}
\begin{lemma}\label{lem:hered=ff}
  An essentially surjective pinned symmetric monoidal 
  category $\tau : \freesym C \to M$ satisfies the
  hereditary condition if and only if $\varepsilon^H_M\LHerm(\epsilon)$ is fully
  faithful.
\end{lemma}
\begin{proof}
  Applying Power coherence, we can reduce to the
  case where $\tau$ is an identity-on-objects symmetric strict
  monoidal functor.
  Let us now unpack the effect on morphisms of
  $\varepsilon^H_M\LHerm(\epsilon)$ in this case.  The objects of the
  symmetric monoidal category $\LHerm(\End(M)|C)$ are sequences of
  objects in $C$, and $\varepsilon^H_M\LHerm(\epsilon)$ sends
  $(x_1,\ldots,x_m)$ to $\tau x_1 \tensor \cdots \tensor \tau x_m$.
  The hom sets of $\LHerm( \End(M)|C )$ are
\[ \LHerm(\End(M)|C) \big( (x_i)_{i\in \underline m}, (y_j)_{j\in\underline n} \big) = \sum\limits_{\alpha:\underline{m}\to\underline{n}} \prod\limits_{j\in\underline{n}} M \big(\bigotimes_{\alpha i=j}\tau x_i,\tau y_j\big) , \]
and the hom mapping of $\varepsilon^H_M\LHerm(\epsilon)$ into $M (\tensor_i \tau x_i, \tensor_j \tau  y_j)$, sends $(\alpha, g_1,\ldots,g_n)$ to the composite
\[ \underset{i\in\underline{m}}{\Tensor} \tau x_i \stackrel{\sigma} \longrightarrow
\underset{i\in\underline{m}}{\Tensor} \tau x_{\sigma^{-1} i}  =
\underset{j\in\underline{n}}{\Tensor} \ \underset{\alpha i=j}{\Tensor} \tau x_i \stackrel{\tensor_j g_j}
\longrightarrow
\underset{j\in\underline{n}}{\Tensor} \tau y_j . \]
Thus, the hom functions for 
$\varepsilon^H_M\LHerm(\epsilon)$ are exactly the functions $\hmap_{\tau,x,y}$ 
whose bijectivity is the hereditary condition.
\end{proof}
Taking Theorem~\ref{thm:exactness-convolution-characterisations} 
and Propositions~\ref{prop:rPat=hered} and \ref{prop:substitudes-among-pinned-SMCs} together, we have thus arrived at the first part of the main theorem:
\end{blanko}

\begin{thm}\label{thm:Im(F)}
  $\iopLHerm : \Subst \to \pSymMonCat$ induces a biequivalence $\Subst
  \sim \RPat$.  \qed
\end{thm}

\begin{blanko}{Operads.}
  There are three ways of regarding operads as substitudes.  For a
  given operad $P$, the options are: the trivial pinning, where $C =
  \operatorname{obj}(P)$, the discrete category of objects; the
  canonical groupoid pinning, where $C = P_1^{\operatorname{iso}}$,
  the groupoid of invertible unary operations; and the full pinning,
  where $C= P_1$, the category of all unary operations.

  We are interested at the moment in the canonical groupoid pinning,
  $P_1^{\operatorname{iso}} \to P$.  Note that the functor
  $(-)^{\operatorname{iso}} : \Cat \to \Grpd$ is \emph{not} a
  2-functor.  However it does make sense to apply
  $(-)^{\operatorname{iso}}$ to \emph{invertible} natural
  transformations, and so $(-)^{\operatorname{iso}}$ is
  $\Grpd$-enriched.  If, for any 2-category $\ca K$, we denote by
  $(\ca K)^{\operatorname{2-iso}}$ its underlying groupoid-enriched
  category, obtained from $\ca K$ simply by ignoring non-invertible
  2-cells, then another way to express these considerations is to say
  that one has a 2-functor $(-)^{\operatorname{iso}} :
  (\Cat)^{\operatorname{2-iso}} \to \Grpd$.

  Thus, the process of taking the canonical groupoid pinning is the
  effect on objects of a 2-functor
  \[
  \GPin : (\Opd)^{\operatorname{2-iso}} \longrightarrow \Subst
  \]
  which is 2-fully-faithful when the codomain is restricted to
  $(\Subst)^{\operatorname{2-iso}}$.  Writing $\FeynCat$ for the full
  sub-2-category of $(\pSymMonCat)^{\operatorname{2-iso}}$ consisting
  of the Feynman categories in the sense of Kaufmann and Ward, the
  second part of our main theorem is as follows.
\end{blanko}

\begin{thm}\label{thm:FC=opd}
  $\iopLHerm \circ \GPin$ restricts to a biequivalence 
  $(\Opd)^{\operatorname{2-iso}} \sim \FeynCat$.
\end{thm}
\begin{proof}
  First we check that a pinned symmetric monoidal category in the
  image is a Feynman category, using \ref{cor:FC}.  We already know
  that it satisfies the hereditary condition, and by construction
  $P_1^{\operatorname{iso}}$ is a groupoid.  It remains to check that
  \[
  \freesym P_1^{\operatorname{iso}} \to (\LHerm P)_{\operatorname{iso}}
  \]
  is an equivalence.  These two categories have the same objects,
  namely sequences of objects in $P$.  The arrows in $\LHerm P$ from
  $\mathbf{x}$ to $\mathbf{y}$ form the hom set
  \[
  \sum_{\alpha:\underline m \to \underline n} \prod_{j\in \underline n} 
  P((x_i)_{i\in \alpha^{-1}(j)} ; y_j)
  \]
  They are composed as operations in $P$.  There is an obvious
  forgetful functor to $\Set$, given by returning the indexing set for
  the sequence.  For an arrow to be invertible, at least its
  underlying $\alpha$ must be a bijection, and furthermore it is then
  clear that the involved operations have to be invertible unary
  operations.
  
  Conversely, if we start with a Feynman category $\tau: \freesym C
  \to M$, the image in $\Subst$ is the substitude $C \to \End(M)|C$,
  and since $C$ is a groupoid and $\freesym C\simeq
  M_{\operatorname{iso}}$, it is clear that the invertible unary
  operations of $\End(M)|C$ are precisely those of $C$, which is the
  condition for the substitude to be in the image of $\GPin$.
\end{proof}

\begin{blanko}{Two other subcategories of $\kat{RPat}$ equivalent to the category of operads.}
  Let us note that there are two other subcategories of $\kat{RPat}$
  which are equivalent to the category of operads, given by the two other
  natural embeddings of $\Opd$ into $\Subst$: one takes an operad $P$
  to its discrete pinning $\obj(P)\to P$---this is a 
  $1$-functor only, as it is not defined on $2$-cells.  The other 
  embedding takes
  $P$ to its full pinning $P_1 \to P$ (this is a honest $2$-functor).
  
  The first corresponds to the subcategory of discrete substitudes, which in
  turn corresponds to the subcategory of regular patterns $\freesym C\to M$ for
  which $C$ is discrete.  
  While this is obviously a stronger condition than the
  first Feynman-category condition, that of being a groupoid, on the other hand
  the second Feynman-category condition, $\freesym C \isopil
  M_{\operatorname{iso}}$ is not in general satisfied.  Imposing this second
  condition on top of the discreteness condition gives the notion of discrete
  Feynman category, mentioned in \cite{Kaufmann-Ward:1312.1269} to be related to
  operads.  More precisely this notion corresponds to operads in which there are
  no unary operations other than the identities (the locus of operads for which
  the groupoid-pinning and discrete-pinning embeddings coincide).  
  (This embedding of the $1$-category of operads into regular 
  patterns was also observed by Getzler~\cite{Getzler:0701767}.)
  
  The second embedding of operads into substitudes, endowing an operad $P$ with
  its full pinning $P_1 \to P$, has as essential image that of {\em normal}
  substitudes, namely those whose unary operations are precisely those coming 
  from $C$.  These can also be characterised as those for which the square
  constituting the unique substitude morphism to the terminal substitude
  $1 \to \mathrm{Comm}$ is a pullback:
  $$\xymatrix{
     C \ar[r]\ar[d] \drpullback &  P \ar[d] \\
     1 \ar[r] & \mathrm{Comm}.
  }$$
  
  The corresponding sub-$2$-category in $\kat{RPat}$ is the 
  $2$-full sub-$2$-category spanned
  by the regular patterns $\freesym C \to M$ which are pullbacks of the
  terminal regular pattern $\freesym 1 \to \kat{FinSet}$.
  In detail, for any (strictified) regular pattern, which in virtue of
  Theorem~\ref{thm:Im(F)} is of the form $\freesym C \to \LHerm P$, we have 
  a commutative square of
  symmetric strong monoidal functors
  $$\xymatrix{
     \freesym C \ar[r]\ar[d] & \LHerm P \ar[d] \\
     \freesym 1 \ar[r] & \kat{FinSet} ,
  }$$
  which clearly is a pullback precisely when the previous square is.
  
  The normality condition plays an important role in the
  theory of generalised multicategories of Cruttwell and
  Shulman~\cite{Cruttwell-Shulman:0907.2460}, a more general framework
  which includes substitudes as a special case.  Their generalised
  multicategories are monoids in $T$-spans, for $T$ a monad on a
  virtual equipment (a particular kind of double category).  They are
  called normalised when the unary operations coincide with the
  vertical arrows in the double category, and the importance of the
  condition is that non-normalised generalised multicategories can be
  reinterpreted as normalised ones, by adjusting the monad and the
  ambient virtual equipment.  Cruttwell and Shulman also consider the
  discrete-objects case, but do not consider the intermediate
  invertible case, which in the present situation is the most
  interesting.
\end{blanko}

\begin{blanko}{Algebras for substitudes.}
Given a substitude $\phi: C\to P$, and a symmetric monoidal category $W$ an algebra consists of
\begin{shortbulletlist}
\item a functor $A : C \to W$
\item for each $\mathbf{x} = (x_1,\ldots,x_n)$ and $y \in C$, an action map
\[ P(\mathbf{x}; y) \longrightarrow W(A(x_1) \tensor \cdots \tensor A(x_n),A(y)) \]
\end{shortbulletlist}
satisfying some conditions. One of these conditions says that for every arrow $f:x \to y$ in $C$, the action of the unary operation $\phi(f)$ is equal to
$A(f): A(x) \to A(y)$.  The remaining conditions are those of an algebra for the operad $P$; in fact, the first condition implies that functoriality in $C$ follows from the $P$-algebra axioms, so to give an algebra for $\phi: C \to P$ in $W$ amounts just
to give an algebra for the operad $P$ in $W$.
  
Just as in the case of operads, the notion of algebra can also be described in
terms of a substitude of endomorphisms: the {\em endomorphism substitude}
$E^A$ of a functor $A: C \to W$ is given in elementary terms as the substitude with category $C$ and $E^A(x_1,\ldots,x_n;y) = W(A(x_1) \tensor \cdots \tensor A(x_n), A(y))$.  An algebra structure on $A: C \to W$ is now the same thing as an identity-on-$C$ substitude map $\phi \to E^A$, that is, an operad map $P \to E^A$ under $C$.  The endomorphism substitude can be described more conceptually as
\[ C \to \End(W)|C , \]
so altogether the notion of algebra is conveniently formulated in 
terms of the substitude adjunction: the endomorphism substitude of 
$A: C \to W$ is precisely $\iopEnd(\tau^A)$ where $\tau^A : \freesym 
C \to W$ is the tautological factorisation of $A$ through $\freesym 
C$, and an algebra is an identity-on-$C$ substitude map $\phi \to 
\iopEnd(\tau^A)$. By adjunction, this is the same thing as an 
identity-on-$C$ pinned symmetric strong monoidal functor
\[ \iopLHerm (\phi) \to \tau^A. \]
Giving this amounts just to giving $\alpha:\LHerm P \to W$ (a 
symmetric strong monoidal functor), that is, an algebra for
the regular pattern corresponding to $\phi$.
  
This works also for algebra homomorphisms: a homomorphism of substitude algebras is a natural transformation $u:A \Rightarrow B$ compatible with the action maps. In terms of endomorphism substitudes, this compatibility amounts to commutativity of the diagram (of operads under $C$)
  \begin{equation}\label{eq:alg-homo}
    \begin{gathered}
    \xymatrix  @R=1pc @C=1.6pc {
     & E^A  \ar[rd]^{u(-,*)} & \\
     P \ar[ru] \ar[rd] && E^{A,B} \\
     & E^B \ar[ru]_{u(*,-)} &
    }\end{gathered}
  \end{equation}
where $E^{A,B}$ is the $W$-bimodule $\freesym C \op \times C \to W$ given by $E^{A,B}(\mathbf{x};y) = W(A(x_1) \tensor \cdots \tensor A(x_n), B(y))$, $u(-,*)$ is induced by postcomposition with $u_y$, and $u(*,-)$ is induced by precomposition with $u_{x_1} \tensor \cdots \tensor u_{x_n}$. Now the natural transformation $u : A \Rightarrow B$ extends tautologically to a monoidal natural transformation $\tau^u : \tau^A \Rightarrow \tau^B$, which in turn extends to a monoidal natural transformation
\[ \xymatrix{
     \LHerm P \ar@/^0.6pc/[r]^\alpha\ar@/_0.6pc/[r]_\beta  \ar@{}[r]|{\Downarrow} 
     &W ,
  } \]
which is the corresponding homomorphism of regular-pattern algebras. The components of this monoidal natural transformation are the same as those of $\tau^u$, since $\freesym C$ and $\LHerm P$ have the same object set; naturality in arrows in $\LHerm P$ is a consequence of \eqref{eq:alg-homo}, and monoidality follows from construction.
  
Clearly this construction can be reversed, to construct an substitude-algebra homomorphism
from a monoidal natural transformation. Altogether we obtain the following proposition.
\end{blanko}

\begin{prop}\label{prop:algebras}
Algebras for a substitude are the same thing as algebras for the corresponding
regular pattern. More precisely for each symmetric monoidal category $W$,
there is an equivalence of categories between the category of algebras
in $W$ for a substitude, and the category of algebras in $W$ for the
corresponding regular pattern. \qed
\end{prop}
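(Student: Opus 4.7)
The plan is to exploit the substitude adjunction $\iopLHerm \isleftadjointto \iopEnd$ established in \eqref{eq:pSMC-Subst-adjunction}, together with the characterisation of algebras via endomorphism objects that is standard in operad theory. The key observation is that for a functor $A : C \to W$ there is a tautological factorisation $\tau^A : \freesym C \to W$ (the unique symmetric strict monoidal extension of $A$), so that $A$ together with action data is equivalent to a morphism of pinned symmetric monoidal categories into $(C, \tau^A, W)$; then apply the adjunction to move this to the substitude side.

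First I would define on objects the two mutually inverse assignments. Given a substitude algebra $(A,\text{action})$ on $\phi : C \to P$, I would verify that the action data is precisely an identity-on-$C$ morphism of substitudes $\phi \to \iopEnd(\tau^A)$, where the latter is $C \to \End(W)|C$ as unpacked just before Lemma~\ref{lem:hered=ff}. This is essentially a bookkeeping check: the action maps $P(\mathbf{x};y) \to W(\bigotimes_i A x_i, Ay)$ are exactly the operad-hom components of such a morphism, the unital compatibility with $\phi$ says the morphism is identity-on-$C$, and the associativity/equivariance axioms match operad morphism axioms. Applying the adjunction $\iopLHerm \isleftadjointto \iopEnd$, such a morphism corresponds to a pinned symmetric monoidal functor $\iopLHerm(\phi) \to (C,\tau^A,W)$, which (since both pinnings use the same $\freesym C$) is the same datum as a symmetric strong monoidal functor $\LHerm P \to W$ under $\freesym C$, i.e.\ an algebra of the corresponding regular pattern. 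Going backwards is equally direct: from $\alpha : \LHerm P \to W$ restrict along the unit of the pinned Hermida adjunction to recover the operad action, and restrict along $\phi : C \to P$ (viewed inside $\LHerm P$) to recover $A$.

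For morphisms I would follow the diagram \eqref{eq:alg-homo} verbatim. A homomorphism of substitude algebras $u : A \Rightarrow B$ is a natural transformation making both composites into the endomorphism bimodule $E^{A,B}$ agree. This natural transformation extends tautologically to a monoidal natural transformation $\tau^u : \tau^A \Rightarrow \tau^B$ between the strict monoidal extensions, and one checks that the compatibility in \eqref{eq:alg-homo} is exactly the naturality of the induced transformation $\alpha \Rightarrow \beta : \LHerm P \to W$ with respect to non-unary operations in $\LHerm P$. Monoidality is automatic because the components at a sequence $(x_1,\ldots,x_n)$ are forced to equal $u_{x_1} \tensor \cdots \tensor u_{x_n}$ (the target is $W$, a genuine symmetric monoidal category, and $\LHerm P$ has concatenation as tensor product). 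Conversely, any monoidal natural transformation between regular-pattern algebras restricts, by composition with $\eta^H$ on the unary part, to a natural transformation $A \Rightarrow B$, and the naturality in multi-ary operations yields back the compatibility in \eqref{eq:alg-homo}.

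The only thing requiring genuine verification, and the place where I expect to spend the most care, is the identification in the first paragraph that the substitude-algebra axioms \emph{together with} the axiom saying that $\phi(f)$ acts as $A(f)$ precisely correspond to the adjunct data of a monoidal functor out of $\LHerm P$; in particular one must check that the functoriality of $A$ on $C$ is not independently imposed but is already implied by the $P$-algebra axioms via the factorisation through $\phi$. Once this is done, both the object-level and morphism-level correspondences are inverse to each other on the nose (not merely up to isomorphism), so the resulting bijection assembles into an isomorphism of categories, yielding the claimed isomorphism for each $W$, which completes the proof.
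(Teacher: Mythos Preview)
Your proposal is correct and follows essentially the same approach as the paper: both arguments reformulate a substitude algebra as an identity-on-$C$ morphism $\phi \to \iopEnd(\tau^A)$, transport it across the substitude adjunction $\iopLHerm \dashv \iopEnd$ to a pinned monoidal functor $\iopLHerm(\phi) \to \tau^A$, and handle homomorphisms via the bimodule diagram \eqref{eq:alg-homo}. You even flag the same subtlety the paper notes in passing, namely that functoriality of $A$ in $C$ is not an extra condition but is already implied by the $P$-algebra axioms together with the compatibility with $\phi$.
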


Since algebras for a substitude $\phi: C \to P$ are just algebras for the
operad $P$, we immediately get also, via the embedding $\Opd \to \Subst$ by
canonical groupoid pinning:
\begin{cor}
Algebras for an operad are the same thing as algebras for the corresponding
Feynman category.\qed
\end{cor}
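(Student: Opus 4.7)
The plan is to chain together Theorem~\ref{thm:FC=opd} with Proposition~\ref{prop:algebras}, using as the bridging observation the remark made earlier in the discussion of algebras for substitudes: for any substitude $\phi : C \to P$, the functoriality of an algebra in $C$ is automatic from the operad-algebra axioms together with the compatibility with the unary operations coming from $C$. Consequently, to give a $\phi$-algebra in $W$ is the same as to give a $P$-algebra in $W$, and similarly for algebra homomorphisms, so the forgetful functor $\tn{Alg}_\phi(W) \to \tn{Alg}_P(W)$ is an isomorphism of categories.

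The concrete steps would be as follows. First, let $P$ be an operad and let $\mathrm F$ be the Feynman category corresponding to $P$ under the composite embedding $\Opd \to \Subst \to \pSymMonCat$ of Theorem~\ref{thm:FC=opd}; explicitly, $\mathrm F$ is the regular pattern associated by $\iopLHerm$ to the canonical groupoid pinning $P_1^{\operatorname{iso}} \to P$. Then by Proposition~\ref{prop:algebras}, there is an isomorphism of categories between the algebras in $W$ for the substitude $P_1^{\operatorname{iso}} \to P$ and the algebras in $W$ for $\mathrm F$. Combining with the bridging observation above, algebras for $\mathrm F$ are thus identified, naturally in $W$, with algebras for the operad $P$.

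There is essentially no obstacle here, since both components have already been done: Proposition~\ref{prop:algebras} carries out the substitude-to-regular-pattern half, and the substitude-to-operad half is immediate from the definitions (the pinning $C = P_1^{\operatorname{iso}}$ is a subcategory of the unary part of $P$, so the condition that the action of each $\phi(f)$ equal $A(f)$ is nothing beyond $P$-algebra data). Accordingly, the proof itself will be a one-line citation of Proposition~\ref{prop:algebras} and Theorem~\ref{thm:FC=opd}, together with the remark that substitude algebras reduce to operad algebras. The only point worth a sentence of verification is that the category of algebra homomorphisms matches as well, but this is already part of the content of Proposition~\ref{prop:algebras}, whose construction in \eqref{eq:alg-homo} makes no use of the pinning beyond compatibility with the unary operations coming from $C$.
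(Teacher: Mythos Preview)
Your proposal is correct and matches the paper's own argument exactly: the corollary is obtained by combining Proposition~\ref{prop:algebras} with the observation (made just before it) that algebras for a substitude $\phi : C \to P$ are the same as algebras for the operad $P$, applied to the canonical groupoid pinning of Theorem~\ref{thm:FC=opd}. The paper in fact gives no more than this one-sentence derivation, so your expanded account is, if anything, more detailed than needed.
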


\appendix
\setcounter{section}{1}
\addcontentsline{toc}{section}{Appendix: Power coherence}

\section*{Appendix A: Power coherence}


In this appendix we recall coherence for symmetric monoidal categories, from the
point of view of Power's general approach to coherence
\cite{Power-GeneralCoherenceResult}.  This point of view takes as input the
$2$-monad $\freesym$ on $\Cat$ for symmetric monoidal categories, and produces the
coherence theorem for symmetric monoidal categories.  The formulation of this
result given in Lemma~\ref{lem:Power-coherence} below, is most convenient for us
for the purposes of studying regular patterns and Feynman categories.

\begin{blanko}{$2$-monads and their algebras.}
  A \emph{$2$-monad} $T$ on a $2$-category $\ca K$ is just a monad in the sense of
  $\Cat$-enriched category theory.  Thus one has the usual data of a monad
  \[ 
  \begin{array}{lcccr} {T : \ca K \longrightarrow \ca K} && 
  {\eta : 1_{\ca K} \longrightarrow T} &&
  {\mu : T^2 \longrightarrow T} \end{array} 
  \]
  but where $T$ is a $2$-functor, and $\eta$ and $\mu$ are $2$-natural, and the
  axioms are written down exactly as before.  The extra feature is that now one
  has several different types of algebras, and several different types of
  algebra morphisms, and thus a variety of alternative $2$-categories of algebras.
  For instance a \emph{pseudo $T$-algebra} structure on $A \in \ca K$ consists
  of the data of an action $a : TA \to A$, as well as invertible coherence
  $2$-cells $\overline{a}_0:1_A \to a\eta_A$ and $\overline{a}_2:aT(a) \to
  a\mu_A$, which satisfy:
  \[ 
  \xygraph{!{0;(4.5,0):}
  *{\xybox{\xygraph{!{0;(2,0):(0,.5)::} {a}="l" [r] {a\eta_Aa}="m" [d] 
  {a}="r" "l":"m"^-{\overline{a}_0a}:"r"^-{\overline{a}_2\eta_{TA}}:@{<-}"l"^-{\id} 
  "l" [d(.35)r(.7)] {\scriptsize{=}}}}}
  [r]
  *!(0,.02){\xybox{\xygraph{!{0;(2.5,0):(0,.4)::} {aT(a)T^2(a)}="tl" [r] {a\mu_AT^2(a)}="tr" [d] {a\mu_A\mu_{TA}}="br" [l] {aT(a)T(\mu_A)}="bl" "tl":"tr"^-{\overline{a}_2T^2(a)}:"br"^-{\overline{a}_2\mu_{TA}}:@{<-}"bl"^-{\overline{a}_2T(\mu_A)}:@{<-}"tl"^-{aT(\overline{a}_2)} "tl" [d(.5)r(.5)] {\scriptsize{=}}}}}
  [r]
  *{\xybox{\xygraph{!{0;(2,0):(0,.5)::} {a}="l" [l] {aT(a)T(\eta_A)}="m" [d] 
  {a}="r" "l":"m"_-{aT(\overline{a}_0)}:"r"_-{\overline{a}_2T(\eta_A)}:@{<-}"l"_-{\id}
  "l" [d(.35)l(.7)] {\scriptsize{=}}}}}} 
  \]
  When these coherence isomorphisms are identities, we have a \emph{strict}
  $T$-algebra on $A$.  There are also algebra types in which the coherence
  $2$-cells are non-invertible, but these are not important for us here.

  A \emph{lax morphism} $(A,a) \to (B,b)$ between pseudo $T$-algebras is a pair $(f,\overline{f})$, where $f:A \to B$ and $\overline{f}:bT(f) \to fa$, satisfying the following axioms:
\[ \xygraph{{\xybox{\xygraph{{f}="l" [dl] {bT(f)\eta_A}="m" [r(2)] {fa\eta_A}="r" "l":"m"_-{\overline{b}_0f}:"r"_-{\overline{f}\eta_A}:@{<-}"l"_-{f\overline{a}_0} "l" [d(.6)] {\scriptsize{=}}}}}
[r(5)]
{\xybox{\xygraph{!{0;(1.25,0):(0,.8)::} {bT(b)T^2(f)}="p1" [r(2)] {b\mu_BT^2(f)}="p2" [d] {fa\mu_A}="p3" [dl] {faT(a)}="p4" [ul] {bT(fa)}="p5" "p1":"p2"^-{\overline{b}_2T^2(f)}:"p3"^-{\overline{f}\mu_A}:@{<-}"p4"^-{f\overline{a}_2}:@{<-}"p5"^-{\overline{f}T(a)}:@{<-}"p1"^-{bT(\overline{f})} "p1" [d(.8)r] {\scriptsize{=}}}}}} \]
  When $\overline{f}$ is an isomorphism, $f$ is said to be a \emph{pseudo
  morphism}, and when $\overline{f}$ is an identity, $f$ is said to be
  \emph{strict}.  Given lax $T$-algebra morphisms $f$ and $g:(A,a) \to (B,b)$, a
  $T$-algebra $2$-cell $f \to g$ is a $2$-cell $\phi:f \to g$ in $\ca K$ such that
  $\overline{g}(bT(\phi))=(\phi a)\overline{f}$.  In the case $T = \freesym$ one
  has
\begin{shortbulletlist}
\item lax morphism $=$ symmetric lax monoidal functor,
\item pseudo morphism $=$ symmetric strong
monoidal functor,
\item strict morphism $=$ symmetric strict monoidal functor, and
\item algebra $2$-cell $=$ monoidal natural transformation.
\end{shortbulletlist}
The standard notations for some of the various $2$-categories of algebras of a $2$-monad $T$ are
\begin{shortbulletlist}
\item $\PsAlg T$: objects are pseudo $T$-algebras, morphisms are pseudo morphisms,
\item $\Algs T$: objects and morphisms are strict, and
\item $\Algl T$: objects are strict and morphisms are lax,
\end{shortbulletlist}
\end{blanko}

\begin{blanko}{The free symmetric-monoidal-category $2$-monad.}
  The $2$-monad $\freesym$ is described explicitly in Section 5.1 of
  \cite{Weber-PolynomialFunctors}.  For a category $C$, an object of
  $\freesym C$ is a finite sequence of objects of $C$, and a morphism is
  of the form
  \[ (\rho,(f_i)_{1{\leq}i{\leq}n}) : (x_i)_{1{\leq}i{\leq}n} 
  \longrightarrow (y_i)_{1{\leq}i{\leq}n} 
  \]
  where $\rho \in \Sigma_n$ is a permutation, and for $i \in \underline{n} =
  \{1,...,n\}$, $f_i : x_i \to y_{\rho i}$.  Intuitively such a morphism is a
  permutation labelled by the arrows of $C$, as in
  \[
  \xygraph{!{0;(1.25,0):(0,1.25)::} {x_1}="t1" [r] {x_2}="t2" [r] {x_3}="t3" [r] {x_4}="t4"
  "t1" [d] {y_1}="b1" [r] {y_2}="b2" [r] {y_3}="b3" [r] {y_4.}="b4"
  "t1"-"b4"|(.43)@{>}^(.38){f_1} "t2"-@/^{.5pc}/"b1"|(.6)@{>}_(.6){f_2} "t3"-"b3"|(.57)@{>}^(.5){f_3} "t4"-@/_{.5pc}/"b2"|(.57)@{>}_(.54){f_4}}
  \]
  The unit $\eta_C : C \to \freesym C$ is given by the inclusion
  of sequences of objects of length $1$.  The multiplication $\mu_C :
  \freesym^2 C \to \freesym C$ is given on objects by concatenation, and
  on arrows by the substitution of labelled permutations.  Given a symmetric
  monoidal category $M$, $(X_i)_i \mapsto \bigotimes_i X_i$ is the effect on
  objects of a functor $\bigotimes : \freesym M \to M$, whose arrow map
  is described using the symmetries of $M$.  This is the action of a pseudo
  $\freesym$-algebra structure on $M$.  The unit coherences (those denoted
  $\overline{a}_0$ in the general definition) in this case are identities, and
  the components of $\overline{a}_2$ come from the associators and unit
  coherences of $M$.  The strictness of $M$ as a symmetric monoidal
  category, is the same thing as its strictness as an $\freesym$-algebra.  In
  fact, aside from the strictness of the unit coherences, and the specification
  of the tensor product as an $n$-ary tensor product for all $n$, rather than
  just the cases $n = 0$ (the unit usually written as $I$) and $n = 2$ (the
  usual binary tensor product $(A,B) \mapsto A \tensor B$), a pseudo
  $\freesym$-algebra is \emph{exactly} a symmetric monoidal category in the usual
  sense.
\end{blanko}

The coherence theorem for symmetric monoidal categories says that every
symmetric monoidal category is equivalent to a strict one.  Mac Lane's original
proof of this involved a detailed combinatorial analysis.  However from the
point of view of Power's general approach~\cite{Power-GeneralCoherenceResult}, this
result comes out of how $\freesym$ interacts with the \emph{Gabriel
factorisation system} on $\Cat$, by which every functor $f : A \to B$ is
factored as
\[ 
\xygraph{{A}="p0" [r] {C}="p1" [r] {B}="p2" "p0":"p1"^-{g}:"p2"^-{h}}
\]
where $g$ is bijective on objects, and $h$ is fully faithful.

\begin{blanko}{The Gabriel factorisation system.}
  The Gabriel factorisation system, in which the left class is that of 
  bijective-on-objects functors, and the right class consists of the fully 
  faithful functors, is an orthogonal factorisation system, meaning that 
  arrows in the left class admit a unique lifting property with respect to 
  arrows in the right class. One way to formulate this, is to say that for 
  any bijective-on-objects functor $b : A \to B$, and fully faithful functor 
  $f : C \to D$, the square 
  \[ {\xygraph{!{0;(2.5,0):(0,.4)::}
  {\Cat(B,C)}="p0" [r] {\Cat(A,C)}="p1" [d] {\Cat(A,D)}="p2" [l] 
  {\Cat(B,D)}="p3" "p0":"p1"^-{\Cat(b,C)}:"p2"^-{\Cat(A,f)}:@{<-}"p3"^-{\Cat(b,D)}:@{<-}"p0"^-{\Cat(B,f)}}
  } \]
  is a pullback in the category of sets.  In fact the Gabriel factorisation is
  enriched over $\Cat$, meaning that these squares are also pullbacks in
  $\Cat$.  Explicitly on arrows this says that given $\alpha$ and $\beta$ as in
  
  \vspace*{-24pt}
  
  \[ 
  \xygraph{!{0;(2,0):(0,.5)::} {A}="p0" [r] {C}="p1" [d] {D}="p2" [l] {B}="p3" "p0":@/^{.75pc}/"p1"|(.6){}="tpd":"p2"^-{f}:@/_{.75pc}/@{<-}"p3"|(.4){}="bpd":@{<-}"p0"^-{b}
  "p0":@/_{.75pc}/"p1"|(.4){}="tpc" "p3":@/_{.75pc}/"p2"|(.4){}="bpc"
  "tpd":@{}"tpc"|(.25){}="td"|(.75){}="tc" "td":@{=>}"tc"|-{}="ta"
  "ta" [u(.05)l(.1)]{\scriptstyle{\alpha}}
  "bpd":@{}"bpc"|(.25){}="bd"|(.75){}="bc" "bd":@{=>}"bc"|-{}="ba"
  "ba" [d(.05)r(.1)]{\scriptstyle{\beta}}
  "p3":@{.>}@/^{.75pc}/"p1"|(.5){}="mpd" "p3":@{.>}@/_{.75pc}/"p1"|(.5){}="mpc"
  "mpd":@{}"mpc"|(.25){}="md"|(.75){}="mc" "md":@{:>}"mc"|-{}="ma"
  "ma" [l(.1)]{\scriptstyle{\delta}}} 
  \]
  
  \vspace*{-12pt}
  
\noindent  such that $f\alpha = \beta b$, there exists a unique
  $\delta$ as shown such that $\alpha = \delta b$ and $f\delta = \beta$.  Moreover
  the Gabriel factorisation system is an \emph{enhanced} factorisation system in
  the sense of Kelly~\cite{Kelly:enhanced}, which is to say that given $u$, $v$ and $\alpha$ as on the left,
  \[ 
    \xygraph{{\xybox{\xygraph{!{0;(2,0):(0,.5)::} {A}="p0" [r] {C}="p1" [d] {D}="p2" [l] {B}="p3" "p0":"p1"^-{u}:"p2"^-{f}:@{<-}"p3"^-{v}:@{<-}"p0"^-{b}:@{}"p2"|-*{\iso}="m" "m" [u(.2)] {\scriptstyle{\alpha}}}}}
    [r(5)]
    {\xybox{\xygraph{!{0;(2,0):(0,.5)::} {A}="p0" [r] {C}="p1" [d] {D}="p2" [l] {B}="p3" "p0":"p1"^-{u}:"p2"^-{f}:@{<-}"p3"^-{v}:@{<-}"p0"^-{b}:@{}"p2"
    "p3":"p1"^(.4){d}|-{}="m" "m":@{}"p2"|(.45)*{\iso}="mb" "mb" [u(.2)] {\scriptstyle{\beta}}}}}} 
  \]
  there exists unique $d$ and $\beta$ as shown on the right, such that $u = db$
  and $\alpha = \beta b$.  It is not difficult to verify that the Gabriel factorisation
  system enjoys these properties~\cite{StreetWalters-YonedaStructures}.
\end{blanko}

From the explicit description of $\freesym$ it is easy to see that $\freesym$
preserves bijective-on-objects functors.

With these details in hand Power's idea boils down to the following.  Given a
symmetric strong monoidal functor $F : S \to M$ where $S$ is a
symmetric {\em strict} monoidal category, one can take the Gabriel factorisation
\[ 
\xygraph{{S}="p0" [r] {M'}="p1" [r] {M}="p2" "p0":"p1"^-{G}:"p2"^-{H}}
\]
of $F$, and then factor the coherence witnessing $F$ as strong monoidal on the left
\[
  \xygraph{{\xybox{\xygraph{!{0;(2,0):(0,.5)::}
  {\freesym S}="p0" [r] {S}="p1" [d] {M}="p2" [l] {\freesym M}="p3" "p0":"p1"^-{\bigotimes}:"p2"^-{F}:@{<-}"p3"^-{\bigotimes}:@{<-}"p0"^-{\freesym(F)}
  "p0" [d(.6)r(.5)] {\iso} [u(.25)] {\scriptstyle{\overline{F}}}}}}
  [r(2.5)] {=} [r(2.5)]
  {\xybox{\xygraph{!{0;(2,0):(0,.5)::} {\freesym S}="p0" [r] {S}="p1" [d] 
  {M'}="p2" [d] {M}="p3" [l] {\freesym M}="p4" [u] {\freesym M'}="p5" "p0":"p1"^-{\bigotimes}:"p2"^-{G}:"p3"^-{H}:@{<-}"p4"^-{\bigotimes}:@{<-}"p5"^-{\freesym(H)}:@{<-}"p0"^-{\freesym(G)} "p5":@{.>}"p2"^{\bigotimes}
  "p0" [d(.45)r(.5)] {=}
  "p5" [d(.6)r(.5)] {\iso} [u(.25)] {\scriptstyle{\overline{H}}}}}}} 
\]
uniquely as on the right, using enhancedness and the fact that $\freesym G$ is
bijective on objects.
\begin{lemma}\label{lem:Power-coherence} {\upshape \cite{Power-GeneralCoherenceResult}}
  In the situation just described, $\bigotimes : \freesym M' \to M'$ is a
  symmetric strict monoidal structure on $M'$.  With respect to this
  structure, $G$ is a symmetric strict monoidal functor, and $\overline{H}$ is
  the coherence datum making $H$ into a symmetric strong monoidal functor.
  \qed
\end{lemma}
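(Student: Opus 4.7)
My approach is to verify the three assertions of the lemma by repeatedly invoking the uniqueness in the enhanced Gabriel factorisation, applied to iterates $\freesym^k G$ of the bijective-on-objects functor $G$ against iterates $\freesym^k H$ of the fully faithful functor $H$. The key technical input, already remarked in the preceding discussion, is that $\freesym$ preserves bijective-on-objects functors, so each $\freesym^k G$ sits in the left class of the Gabriel system, while each $\freesym^k H$ is fully faithful. That $G$ is a symmetric strict monoidal functor is essentially built into the factorisation: the square $\bigotimes \circ \freesym G = G \circ \bigotimes_S$ with identity 2-cell was imposed as part of the definition of $\bigotimes$ and $\overline H$, so no separate argument is needed there.

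To check that $\bigotimes : \freesym M' \to M'$ is a (strict) $\freesym$-algebra structure, I would verify the unit and associativity axioms
\[
\bigotimes \circ \eta_{M'} = \id_{M'}, \qquad \bigotimes \circ \mu_{M'} = \bigotimes \circ \freesym \bigotimes
\]
by the same template. For each, pre-compose with $G$ (respectively $\freesym^2 G$) and use naturality of $\eta$ (respectively $\mu$) together with strictness of $S$ to see that both sides agree as functors out of $S$ (respectively $\freesym^2 S$). Post-compose with $H$ and use the pseudo-algebra axioms on $M$ together with the defining equation for $\overline H$ to see that the images under $H$ agree. Uniqueness in the enhanced factorisation then forces equality on the nose in $M'$. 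The same template, applied to 2-cells rather than functors, shows that the coherence 2-cells $\overline a_0$ and $\overline a_2$ for $\bigotimes$ on $M'$ must be identities, because they are forced to be the unique filler of an equation whose $S$-side counterpart is trivial.

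For the strong-morphism structure on $H$, the defining equation expressing $\overline F$ as the pasting of $\overline H$ with the identity square on $G$ automatically produces $\overline H$, invertible because $\overline F$ is invertible and uniqueness of the enhanced factorisation propagates invertibility from $\freesym M$ back to $M'$. Verifying the pseudo-morphism coherence hexagons for $(H, \overline H)$ proceeds by the same template: pre-compose with $\freesym^k G$, use the corresponding hexagons for $(F, \overline F)$ together with naturality of $\eta$, $\mu$, and $\overline F$, then cancel the bijective-on-objects functor by uniqueness. The main technical obstacle is the associativity hexagon, where the pasting diagram involves $\mu$, two instances of $\overline F$, and $\overline a_2^M$, and one must take care that uniqueness in the enhanced factorisation is being applied at the 2-cell level with the correct hypotheses verified; but once the template is set up for the unit axiom, the associativity case reduces to a bookkeeping exercise in pasting diagrams.
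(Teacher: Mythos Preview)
Your overall strategy is correct and is precisely Power's argument, which is what the paper defers to (the paper gives no detailed proof, only the remark that Power's proof is easily adapted). However, there is an imprecision in your treatment of the strict $\freesym$-algebra axioms on $M'$ that is worth tightening.

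When you write ``post-compose with $H$ and use the pseudo-algebra axioms on $M$ together with the defining equation for $\overline H$ to see that the images under $H$ agree'', the images under $H$ do \emph{not} agree on the nose: $H \circ \bigotimes \circ \mu_{M'}$ and $H \circ \bigotimes \circ \freesym\bigotimes$ are only isomorphic, via isos built from $\overline H$ and $\overline a_2^M$. The uniqueness clause of the enhanced factorisation does not say ``two functors with equal precomposite and isomorphic postcomposite are equal''; it says that the pair $(d,\beta)$ with $db = u$ and $\beta b = \alpha$ is unique. So to conclude $\bigotimes \circ \mu_{M'} = \bigotimes \circ \freesym\bigotimes$, you must exhibit each as the $d$-part of a filler for the \emph{same} enhanced lifting problem, which means checking that the two isos $\beta_1, \beta_2$ you construct (from $\overline H$ and $\overline a_2^M$) have the \emph{same} whiskering by $\freesym^2 G$. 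Unwinding this using $\overline H \cdot \freesym G = \overline F$, that equality of whiskerings is exactly the associativity pseudo-morphism axiom for $(F,\overline F)$ (with $S$ strict). You correctly invoke the $(F,\overline F)$ hexagons later when verifying the axioms for $(H,\overline H)$, but they are already needed here; the pseudo-algebra axioms on $M$ alone are not enough. Once you insert this step, the argument goes through exactly as you outline.
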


We attribute this result to Power, although it is not exactly formulated in this
way in \cite{Power-GeneralCoherenceResult}.  Power's result applies to more
general monads $T$ than $\freesym$ (only required to preserved
bijective-on-objects functors), but for the functor $F$ he only considers the
case where $S = T M$ and $F$ is the action $T M \to M$.  It is easy to adjust
his argument to the present situation, to verify the strict $\freesym$-algebra
axioms for $\bigotimes : \freesym M' \to M'$, and the pseudo $\freesym$-morphism
axioms for $\overline{H}$.  Note that $G$ is a strict $\freesym$-algebra
morphism by construction.

\end{document}